\newtheorem{thm}[equation]{Theorem}
\newtheorem{cor}[equation]{Corollary}
\newtheorem{lemma}[equation]{Lemma}
\newtheorem{prop}[equation]{Proposition}
\theoremstyle{definition}
\newtheorem{definition}[equation]{Definition}
\newtheorem{remark}[equation]{Remark}
\newtheorem{example}[equation]{Example}
\newtheorem{examples}[equation]{Examples}
\numberwithin{equation}{section}
\def\End{ \mathsf{End}}
\def\Res{\mathsf{Res}}
\def\diam{\mathsf {di}}  
\def\id{\mathsf{id}}
\def\ima{\mathsf{im}}
\def\ot{\otimes}
\renewcommand{\deg}{\mathsf{deg}}
\renewcommand{\b}{\mathsf{b}}
\newcommand{\ac}{\ell}
\newcommand{\kl}{\mathsf{K}}
\newcommand{\e}{\mathsf{e}}
\newcommand{\f}{\mathsf{f}}
\newcommand{\w}{\mathsf{w}}
\renewcommand{\b}{\mathsf{b}}
\newcommand{\br}{\mathsf{br}}
\newcommand{\tr}{\mathsf{tr}}
\newcommand{\Z}{\mathsf{Z}}
\newcommand{\G}{\mathsf{G}}
\renewcommand{\S}{\mathsf{S}}
\newcommand{\SU}{\mathsf{SU}}
\newcommand{\TT}{\mathbf{T}}
\newcommand{\OO}{\mathbf{O}}
\newcommand{\II}{\mathbf{I}}
\newcommand{\D}{\mathbf{D}}
\newcommand{\DD}{\mathbf{D}}
\newcommand{\ZZ}{\mathsf{Z}}  
\newcommand{\CC}{\mathbb{C}}  
\newcommand{\C}{\mathbf{C}}  
\newcommand{\GG}{\mathsf{G}}
\newcommand{\NN}{\mathbb{N}}
\newcommand{\msk}{\medskip}
\newcommand{\rf}{\mathsf{r}}
\newcommand{\sm}{\mathsf{s}}
\newcommand{\ssk}{\smallskip}
\newcommand{\p}{\mathsf{p}}
\newcommand{\TL}{\mathsf{TL}}  
\newcommand{\Tr}{\mathsf{tr}}
\newcommand{\vf}{\mathsf{v}}
\newcommand{\VV}{\mathsf{V}}
\newcommand{\W}{\mathsf{W}}
\newcommand{\modd}{\ \mathsf{mod}\,}
\newcommand{\half}{\frac{1}{2}} 
\newcommand{\dimm}{\mathsf{dim}\,}
\newcommand{\spann}{\mathsf{span}}
\renewcommand{\v}{\mathsf{v}}
\newcommand{\overbar}[1]{\mkern 1.5mu\overline{\mkern-1.5mu#1\mkern-1.5mu}\mkern 1.5mu}
\begin{document} 
\title {McKay Centralizer Algebras} 
 
 \author{Jeffrey M.~Barnes, Georgia Benkart, and Tom Halverson}
   \date{}
 
 \maketitle
 
 \begin{abstract}
For a finite subgroup $\G$ of the special unitary group $\SU_2$, we study the 
centralizer algebra $\Z_k(\G) = \End_\G(\VV^{\otimes k})$ of $\G$ acting on the $k$-fold tensor product of
its defining representation $\VV = \mathbb C^2$.  These subgroups are in bijection with  the simply-laced affine Dynkin diagrams.  The McKay correspondence relates the representation theory of these groups to the associated Dynkin diagram, and we use this connection to show that the structure
and representation theory  of $\Z_k(\G)$ as a semisimple algebra is controlled by the combinatorics 
of the corresponding  Dynkin diagram.  \end{abstract}

 \textbf{MSC Numbers (2010)}: 05E10 (primary), 14E16, 20C15 (secondary)  \hfill \newline

\section*{Introduction}

In 1980, John McKay \cite{McKay} made the remarkable discovery that there is a natural one-to-one correspondence between the finite subgroups of the special unitary group  $\SU_2$  and the  simply-laced affine Dynkin diagrams,   
which can be described as follows.  Let $\VV= \mathbb C^2$ be the defining representation of $\SU_2$ and let $\G$  be a finite subgroup of $\SU_2$  with irreducible modules $\G^\lambda, \lambda \in \Lambda(\G)$.  The representation graph $\mathcal R_{\VV}(\G)$ (also known as the McKay graph or McKay quiver) has vertices indexed by the $\lambda \in \Lambda(\G)$ and $a_{\lambda, \mu}$ edges from  $\lambda$ to $\mu$ if $\G^\mu$ occurs in $\G^\lambda \otimes \VV$ with multiplicity $a_{\lambda, \mu}$. 
Almost a century earlier,  Felix Klein had determined that a finite subgroup of $\SU_2$ must be one of the following:  (a) a cyclic group $\C_n$ of order $n$, (b) a binary dihedral group $\DD_n$ of order $4n$, or (c) one of the 3 exceptional groups: the binary tetrahedral group $\TT$ of order 24,  the binary octahedral group $\OO$ of order 48, or the binary icosahedral group $\II$ of order 120.   McKay's observation was that the representation graph of  $\C_n, \DD_n, \TT, \OO, \II$ corresponds exactly to the Dynkin diagram $\mathsf{\hat{A}}_{n-1}$, $\mathsf{\hat{D}}_{n+2}$, $\mathsf{\hat{E}}_6$, $\mathsf{\hat{E}}_7$, $\mathsf{\hat{E}}_8$, respectively (see Section \ref{subsec:Ddiag}  below).

In this paper,  we examine the McKay correspondence from the point of view of Schur-Weyl duality.  Since the McKay graph provides a way  to encode the rules for tensoring by $\VV$, it is natural to consider the  $k$-fold tensor product  module $\VV^{\otimes k}$ and to study the centralizer algebra $\Z_k(\G) = \End_\G( \VV^{\otimes k})$  of endomorphisms that commute with the action of $\G$ on  $\VV^{\otimes k}$. The algebra $\Z_k(\G)$  provides essential information about  the structure
of $\VV^{\ot k}$ as a $\G$-module,  as the projection maps from $\VV^{\ot k}$ onto its irreducible $\G$-summands are idempotents in $\Z_k(\G)$, and the multiplicity of $\G^\lambda$ in $\VV^{\ot k}$ is
the dimension of the $\Z_k(\G)$-irreducible module corresponding to $\lambda$.  The problem of studying centralizer algebras of tensor powers of  the natural $\G$-module
 $\VV = \CC^2$ for $\G \subseteq \SU_2$ via the McKay correspondence is discussed in \cite[4.7.d]{GHJ} in the general framework of derived towers, subfactors, and von Neumann algebras, an approach not adopted here.   Our aim is to develop the  
structure and representation theory of the algebras $\Z_k(\G)$ and to show how they are 
controlled by the combinatorics of the representation graph 
$\mathcal R_\VV(\G)$ (the Dynkin diagram) via double-centralizer theory (Schur-Weyl duality). 

The itemized results below, which follow from known facts  on Schur-Weyl duality (see for example,
\cite[Secs. 3B and 68]{CR} and \cite[Secs.~3 and 5]{HR2}),  can be used for any finite group $\G$ and any finite-dimensional $\G$-module $\VV$.  In this work, 
we will apply them to the case $\G$ is a finite subgroup of $\SU_2$ and $\VV=\CC^2$.   More details will be given in
Section 1.4. \msk
 
\noindent \textbf {Consequences of Schur-Weyl duality} \ssk

\begin{itemize} 
\item[$\bullet$]   The irreducible $\Z_k(\G)$-modules  are  in bijection with the vertices  of $\mathcal R_\VV(\G)$ which correspond  to the irreducible modules  $\G^\lambda$  that occur in 
$\VV^{\ot k}$.
\item[$\bullet$]    The dimensions of these $\Z_k(\G)$-modules enumerate walks on $\mathcal R_\VV(\G)$ of $k$ steps.  
\item[$\bullet$]    When $\VV$ is a self-dual $\G$-module, the dimension of $\Z_k(\G)$  equals the number of walks of  $2k$ steps  on $\mathcal R_\VV(\G)$ starting and ending at node 0, which corresponds to the trivial $\G$-module. 
\item[$\bullet$]  The Bratteli diagram $\mathcal B_{\VV}(\GG)$  (see Section \ref{subsec:Bratteli}) is constructed recursively from $\mathcal R_{\VV}(\GG)$,  and paths on  
$\mathcal B_\VV(\G)$ correspond to walks on $\mathcal R_\VV(\GG)$. 
\item[$\bullet$]  When $k$ is less than or equal to the diameter of the 
graph $\mathcal R_\VV(\G)$,  the algebra $\Z_k(\G)$ has generators labeled by nodes of $\mathcal R_\VV(\G)$, and the relations they satisfy are determined  by the edge structure of $\mathcal R_\VV(\G)$.
\end{itemize}\msk

When $\G$ is a  subgroup of $\SU_2$,  the centralizer algebras satisfy the reverse inclusion $\Z_k(\SU_2)  \subseteq \Z_k(\G)$. It is well known that $\Z_k(\SU_2)$ is isomorphic to the Temperley-Lieb algebra $\TL_k(2)$.  Thus, the centralizer algebras constructed here all contain a Temperley-Lieb subalgebra.   The dimension of  $\TL_k(2)$ is the Catalan number $\mathcal{C}_{k} = \frac{1}{k+1}{{2k} \choose k}$, which  counts walks of $2k$ steps that begin and end at 0 on the representation graph  of $\SU_2$, i.e.  the Dynkin diagram $\mathsf{A}_{+\infty}$,  
 (see \eqref{SU2-Dynkin}). 

Our paper is organized as follows:
\begin{enumerate}
\item[(1)] In Section 1, we derive general properties of the centralizer algebras $\Z_k(\G)$  for subgroups
$\G$ of $\SU_2$.   Many of these results hold for subgroups that are not necessarily finite.  We study the tower $\Z_0(\G) \subseteq \Z_1(\G) \subseteq \Z_2(\G) \subseteq \cdots$ and show that $\Z_k(\G)$ can be constructed from $\Z_{k-1}(\G)$ by adjoining generators that are  (essential) idempotents;  usually there is just one additional generator except when we encounter a branch node in the graph. By using the Jones basic construction, we develop  a procedure for constructing  idempotent generators  of  $\Z_k(\G)$ inspired by the Jones-Wenzl idempotent construction.   

\item[(2)] Section 2 examines  the special case that  $\G$ is the cyclic subgroup $\C_n$.   In Theorems \ref{T:centcycbasis} and \ref{thm:dims}  we present  dimension formulas for $\Z_k(\C_n)$ and  its irreducible modules (equivalently, expressions for the number of walks  
on the affine Dynkin diagram of type  $\mathsf{\hat{A}}_{n-1}$, which is a circular graph with $n$ nodes).  We explicitly exhibit a basis of matrix units for $\Z_k(\C_n)$. These matrix units can be viewed using diagrams that correspond to subsets of $\{1, 2, \ldots, 2k\}$ that satisfy a special $\mathsf{mod}\,n$ condition (see Remark \ref{rem:cycdiag}).  We also consider the case that $\G$ is the infinite cyclic group $\C_\infty$, which has as its representation graph 
the Dynkin diagram $\mathsf{A}_{\infty}$.  Our results on $\C_\infty$, which are summarized in Theorem \ref{thm:cycinf},  show that  $\Z_k(\C_\infty)$ can be regarded,  in some sense, as the limiting case of $\Z_k(\C_n)$ as $n$ grows large.  The algebra 
$\Z_k(\C_\infty)$ is isomorphic to the planar rook algebra $\mathsf {PR}_k$   (see Remark \ref{rem:prook}).

\item[(3)]  Section 3 is devoted to the case that $\G$ is  the binary dihedral group $\DD_n$.  We compute $\dimm \Z_k(\DD_n)$ and the dimensions of the irreducible $\Z_k(\DD_n)$-modules  and construct a basis of matrix units  for
$\Z_k(\DD_n)$ (see Theorems \ref{thm:dicentbasis} and \ref{thm:simplemods}). These matrix units can be described diagrammatically using diagrams that correspond to set partitions of $\{1, 2, \ldots, 2k\}$ into at most 2 parts that satisfy a certain $\mathsf{mod}\,n$ condition.   Corollary \ref{Cor:Dwalks} gives expressions for
the number of walks of $k$ steps on the affine Dynkin diagram of type  $\hat{\mathsf{D}}_{n+2}$ 
starting at node $0$ and ending at node $\ac$.   
 Theorem \ref{thm:diinfcentbasis} treats the centralizer algebra $\Z_k(\DD_\infty)$  of the  infinite dihedral group  $ \DD_\infty$, which has as its representation graph the Dynkin diagram $\mathsf{D}_\infty$ and can be viewed  as the limiting case of the
groups $\D_n$.

\item[(4)] In Section 4, we illustrate how the results of Section 2 can be used to compute  $\dimm \Z_k(\G)$ for $\G = \TT, \OO, \II$ and  the dimensions of the irreducible modules of these algebras.  The case of $\II$ is noteworthy, as the expressions involve the Lucas numbers. \msk
\end{enumerate}  

The names for the exceptional subgroups $\TT, \OO, \II$ derive  from the fact that they are 2-fold covers of  classical polyhedral groups.   If  the center $\mathcal Z(\G) = \{\mathbf{ 1,-1}\}$ is factored out,  these groups have the following   quotients:  $\TT/ \{\mathbf{ 1,-1}\} \cong \mathsf{A}_4$, the alternating group on 4 letters, which  is the rotation  group of the tetrahedron; \ $\OO/  \{\mathbf{ 1,-1}\} \cong \mathsf{S}_4$, the symmetric group on 4 letters, which is the  rotation  group of the cube; \, and $\II/ \{\mathbf{ 1,-1}\} \cong \mathsf{A}_5$, the alternating group on 5 letters,  which is the  rotation  group of the icosahedron. 
An exposition of this result based on an argument of Weyl  can be found in \cite[Sec.~1.4]{Si}.  Our  sequel  \cite{BH} studies the exceptional centralizer algebras  $\Z_k(\G)$ for $\G = \TT, \OO, \II$, exhibiting remarkable connections between them and the Jones-Martin partition algebras. Another sequel \cite{GH} gives linear bases of $\Z_k(\G)$, which are indexed by pairs of paths on the Bratteli diagram $\mathcal B_\VV(\GG),$ and which are analogous to the Temperley-Lieb basis of $\Z_k(\SU_2)$.   

\bigskip
\noindent{\bf Acknowledgments}.   This paper was begun while the authors (G.B. and T.H.)  participated 
in the program  Combinatorial Representation Theory  
at the Mathematical Sciences Research Institute (MSRI) in Spring 2008.  They acknowledge
with gratitude the hospitality  and stimulating research
environment of MSRI.  G. Benkart is grateful to the Simons
Foundation for the support she received as  a Simons Visiting Professor at MSRI.  T. Halverson was partially supported by National Science Foundation grant DMS-0800085 and partially supported by Simons Foundation grant 283311.   J.\ Barnes was partially supported by  National Science Foundation grant DMS-0401098.  
 
The idea of studying McKay centralizer algebras  was inspired by conversions of T.\ Halverson  with Arun Ram, whom we thank for many useful discussions in the initial stages of the investigations.  The project originated as part of the senior capstone thesis \cite{Ba} of J.~Barnes  at Macalester College under the direction of T.~Halverson.  In \cite{Ba},  Barnes  found matrix unit bases, in a different format from what is presented here, for $\Z_k(\C_n)$ and $\Z_k(\DD_n)$  and discovered and proved the  dimension formulas in Theorem \ref{thm:ExceptionalDimensions}.   

\begin{section}{McKay Centralizer Algebras}

\begin{subsection}{$\SU_2$-modules}

Consider the special unitary group $\SU_2$ of $2 \times 2$ complex matrices defined by
\begin{equation}
\SU_2 = \left\{ \begin{pmatrix} \ \alpha & \beta \\ - \bar \beta & \bar \alpha \end{pmatrix} \bigg\vert\ \alpha, \beta \in \CC,  
\alpha \bar \alpha + \beta \bar \beta = 1 \right\},
\end{equation}
where $\bar \alpha$ denotes the complex conjugate of $\alpha$.  For each $r \ge  0,$ $\SU_2$ has an irreducible module $\VV(r)$ of dimension $r+1$.  The module $\VV = \VV(1) = \CC^2$ corresponds to the natural two-dimensional representation on which $\SU_2$ acts by matrix multiplication. Let  $\v_{-1} = (1,0)^{\tt t}, \v_1= (0,1)^{\tt t}$  (here $\tt t$ denotes transpose)
 be the standard basis for this action so that if $g = \left(\begin{smallmatrix} \ \alpha & \beta \\ - \bar \beta & \bar \alpha \end{smallmatrix}\right)$ then $g \v_{-1} = \alpha \v_{-1} - \bar\beta \v_1$ and $g\v_1 = \beta \v_{-1} + \bar \alpha \v_1$.

 Finite-dimensional modules for $\SU_2$ are completely reducible and satisfy the Clebsch-Gordan formula,
\begin{equation}\label{eq:ClebschGordan}
\VV(r) \otimes \VV = \VV(r-1) \oplus \VV(r+1),
\end{equation}
where  $\VV(-1) = 0$.    The representation graph $\mathcal{R}_{\VV}(\SU_2)$ is the infinite graph shown in \eqref{SU2-Dynkin} with vertices labeled by $r= 0, 1, 2, \cdots$ and an edge connecting vertex $r$ to vertex $r+1$ for each $r$
(which can be thought of as the Dynkin diagram ${\mathsf A}_{+\infty}$).   Vertex $r$ corresponds to $\VV(r)$,  and the edges correspond to the tensor product rule \eqref{eq:ClebschGordan}.  Above vertex $r$ we place $\dimm \VV(r) = r+1$.  The trivial module is indicated in white  and the defining module $\VV$ in black.
\begin{equation}\label{SU2-Dynkin}
\begin{array}{rll}
\SU_2: \qquad & \begin{array}{c} \includegraphics[scale=.70,page=1]{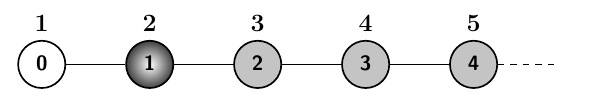} \end{array} &({\mathsf A}_{+\infty}) \\
\end{array}
\end{equation} 
\end{subsection}

\begin{subsection}{Subgroups of $\SU_2$ and their representation graphs}

Let $\G$ be a subgroup of  $\SU_2$.  Then $\G$ acts on the natural two-dimensional representation $\VV = \CC^2$
as $2 \times 2$ matrices with respect to the basis $\{\v_{-1}, \v_1\}$.    We assume that the tensor powers $\VV^{\ot k}$ of $\VV$  are completely reducible $\G$-modules (which is always the case
when $\G$ is finite), and let   $\{\, \G^\lambda \mid  \lambda \in \Lambda(\G)\, \}$ 
denote a complete set of pairwise non-isomorphic irreducible finite-dimensional  $\G$-modules
occurring in some  $\VV^{\ot k}$ for $k = 0,1, \dots$.  
We adopt the convention that $\VV^{\ot 0} =\G^{(0)}$, the trivial $\G$-module.    The representation graph $\mathcal{R}_{\VV}(\G)$ (also called the McKay graph or McKay quiver) is the graph with vertices labeled by elements of $\Lambda(\G)$ with $a_{\lambda,\mu}$ edges  from $\lambda$ to $\mu$ if the decomposition  of $\G^\lambda \otimes \VV$ into irreducible $\G$-modules is given by 
\begin{equation}\label{eq:tensorproductrule}
\G^\lambda \otimes \VV = \bigoplus_{\mu \in \Lambda(\G)} a_{\lambda,\mu} \, \G^\mu.
\end{equation}

The following properties of $\mathcal{R}_{\VV}(\G)$  hold for all finite subgroups $\G \subseteq \SU_2$ (see \cite{St}),
and we will assume that they hold for the groups considered here:
\begin{enumerate}\label{eq:graphasspts} 
\item $a_{\lambda,\mu} = a_{\mu,\lambda}$ for all pairs $\lambda,\mu \in \Lambda(\G)$.
\item $a_{\lambda,\lambda} = 0$ for all $\lambda \in \Lambda(\G), \, \lambda \neq 0$.
\item If $\G \not = \{{\bf 1}\}, \{{\bf 1},-{\bf 1}\}$, where $\bf 1$ is the $2 \times 2$ identity matrix,  then $a_{\lambda,\mu} \in \{0,1\}$ for all $\lambda,\mu \in \Lambda(\G)$.
\end{enumerate}
Thus, $\mathcal{R}_{\VV}(\G)$ is an undirected, simple graph.  
Since $\VV$ is faithful (being the defining module for $\G$), all irreducible $\G$-modules occur in 
some $\VV^{\ot k}$ when $\G$ is finite, and thus $\mathcal{R}_{\VV}(\G)$ is connected.   
Moreover, if  $c_{\lambda,\mu} = 2 \delta_{\lambda,\mu} - a_{\lambda,\mu}$ for $\lambda, \mu \in \Lambda(\G)$, where $\delta_{\lambda,\mu}$ is the Kronecker delta, then McKay \cite{McKay} observed that  $\mathcal C(\G) = [c_{\lambda,\mu}]$ is the Cartan matrix corresponding to the simply-laced affine Dynkin diagram of type $\mathsf{\hat{A}}_{n-1}$, $\mathsf{\hat{D}}_{n+2}$, $\mathsf{\hat{E}}_6$, $\mathsf{\hat{E}}_7$, $\mathsf{\hat{E}}_8$, when $\G$ is one of the finite groups  $\C_n, \D_n, \TT, \OO,$ and $\II$,
respectively.    
The trivial module $\G^{(0)}$ corresponds to the affine node in those cases.   

\end{subsection}

\begin{subsection}{Tensor powers and Bratteli diagrams}

For $k \ge 1$,  the $k$-fold tensor power 
 $\VV^{\otimes k}$ is $2^k$-dimensional and has a basis of simple tensors 
\[
\VV^{\otimes k} =\spann_\CC\left\{\ \v_{r_1} \ot \v_{r_2} \ot \cdots \ot \v_{r_k} \mid  \ r_j \in \{-1,1\} \ \right\}.
\]  If $\rf = (r_1, \ldots, r_k) \in \{-1,1\}^k$, we 
adopt the notation
\begin{equation}\label{SimpleTensorNotation}
\v_\rf = \v_{(r_1, \ldots, r_k)} =  \v_{r_1} \ot \v_{r_2} \ot \cdots \ot \v_{r_k} 
\end{equation} as a shorthand. 
Group elements $g \in \G$ act on simple tensors  by the diagonal action
\begin{equation}\label{DiagonalAction}
g (\v_{r_1} \ot \v_{r_2} \ot \cdots \ot \v_{r_k}) =  g \v_{r_1} \ot g\v_{r_2} \ot \cdots \ot g\v_{r_k}.
\end{equation}

Let
\begin{equation}
\Lambda_k(\G) = \{\ \lambda \in \Lambda(\G) \mid \G^\lambda \hbox{ appears as a summand in the decomposition of $\VV^{\otimes k}$}\}
\end{equation}
index the  irreducible $\G$-modules occurring in  $\VV^{\ot k}$.  
Then, since  $\mathcal{R}_\VV(\G)$ encodes the tensor product rule \eqref{eq:tensorproductrule}, 
$\Lambda_k(\G)$  is the set of vertices in $\mathcal{R}_\VV(\G)$ that can be reached by  walks of $k$ steps  starting from 0.  Furthermore, 
\begin{equation}
\Lambda_{k}(\G) \subseteq \Lambda_{k+2}(\G), \qquad\hbox{for all $k \ge 0$},
\end{equation}
since if a node can be reached in $k$ steps,  then it can also be reached in $k+2$ steps.  

The Bratteli diagram $\mathcal{B}_{\VV}(\G)$ is the infinite graph with vertices labeled by $\Lambda_k(\G)$ on level $k$ and $a_{\lambda,\mu}$ edges from vertex $\lambda \in \Lambda_k(\G)$ to vertex $\mu \in \Lambda_{k+1}(\G)$.  The Bratteli diagram for $\SU_2$ is shown in Figure \ref{fig:SU_2Bratteli}, and the Bratteli diagrams corresponding to $\C_n, \D_n, \TT, \OO,$ $\II$,  as well as to
the infinite subgroups $\C_\infty$, $\D_\infty$, are displayed in Section 4.2.
\begin{figure}[h!]\label{fig:SU_2Bratteli}
\[
\begin{array}{c} \includegraphics[scale=.80,page=9]{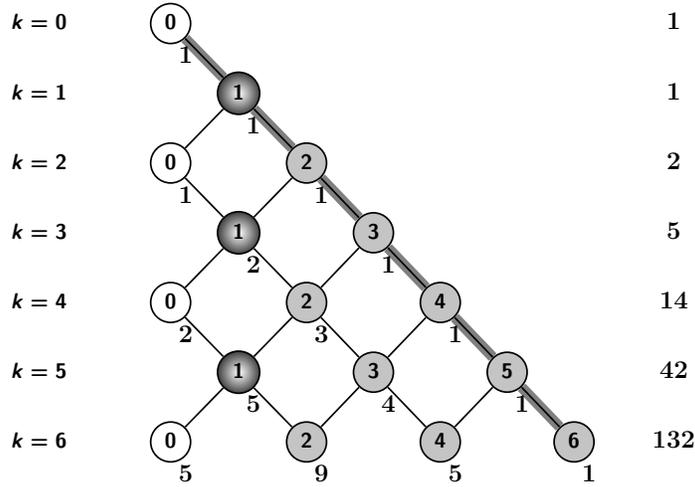} \end{array}
\]
\caption{ Levels $0,1,\ldots, 6$ of the Bratteli diagram for $\SU_2$.}
\label{fig:SU_2Bratteli}
\end{figure} 

A walk of length $k$ on the representation graph $\mathcal{R}_{\VV}(\G)$ from $0$ to $\lambda \in \Lambda(\G)$ is a sequence $\big(0,{\lambda^1}, {\lambda^2}, \ldots,$ ${\lambda^k}=\lambda\big)$  starting at $\lambda^0 = 0$ such that  $\lambda^j \in \Lambda(\G)$   for each $1 \le j \le k$, and  $\lambda^{j-1}$ is connected to  ${\lambda^{j}}$ by an edge in $\mathcal{R}_{\VV}(\G)$.   Such a walk is equivalent to a unique path of length $k$ on the Bratteli diagram $\mathcal{B}_{\VV}(\G)$ from $0 \in \Lambda_0(\G)$ to $\lambda \in \Lambda_k(\G)$.
Let $\mathcal{W}_k^\lambda(\G)$ denote the set of walks on $\mathcal{R}_{\VV}(\G)$ of length $k$ from $0 \in \Lambda_k(\G)$ to $\lambda \in \Lambda_k(\G)$, and 
let $\mathcal{P}_k^\lambda(\G)$   denote the set of paths on $\mathcal{B}_{\VV}(\G)$ of length $k$ from $0 \in \Lambda_0(\G)$ to $\lambda \in \Lambda_k(\G)$. Thus,
$\vert\mathcal{P}_k^\lambda(\G)\vert = \vert\mathcal{W}_k^\lambda(\G)\vert$. 

A pair of walks of length $k$ from $0$ to $\lambda$ corresponds uniquely (by reversing the second walk) to a  walk of length $2k$ beginning and ending at $0$.   Hence, \begin{equation}\label{eq:walks} \vert  \mathcal{W}_{2k}^{0}(\G) \vert  =  \sum_{\lambda \in \Lambda_k(\G)} 
\vert\mathcal{W}^\lambda_k(\G) \vert^2 = 
 \sum_{\lambda \in \Lambda_k(\G)} \vert\mathcal{P}^\lambda_k(\G)\vert ^2 = \vert\mathcal{P}^{0}_{2k}(\G)\vert.\end{equation}
 
Let $m_k^\lambda$  denote the multiplicity of $\G^\lambda$ in $\VV^{\otimes k}$.  Then, by induction on \eqref{eq:tensorproductrule} and the observations in the previous paragraph, we see that this multiplicity is enumerated as
\begin{equation}
\begin{aligned}
m_k^\lambda&=  \vert\mathcal{W}_k^\lambda(\G)\vert  
= \#(\hbox{walks on  $\mathcal{R}_{\VV}(\G)$ of length $k$ from at $0$ to $\lambda$}) \\
&= \vert\mathcal{P}^\lambda_k(\G)\vert 
= \#(\hbox{paths in  $\mathcal{B}_{\VV}(\G)$ of length $k$ from $0 \in \Lambda_0(\G)$  to $\lambda \in \Lambda_k(\G)$}).
\end{aligned}
\end{equation}

The first six rows of the Bratteli diagram $\mathcal{B}_{\VV}(\SU_2)$ for $\SU_2$ are displayed  in Figure \ref{fig:SU_2Bratteli}, and the labels below vertex $r$ on level $k$ give the number of paths from the top of the diagram to $r$, which is also multiplicity of $\VV(r)$ in $\VV^{\otimes k}$.  These numbers also give the number of walks 
of length $k$  from $0$ to $r$ on the representation graph $\mathcal{R}_\VV(\SU_2)$. The column to the right contains the sum of the squares of the multiplicities.
\end{subsection}
\begin{subsection}{Schur-Weyl duality}

The \emph{centralizer of $\G$ on $\VV^{\otimes k}$} is the algebra
\begin{equation}
\Z_k(\G) = \End_\G(\VV^{\otimes k}) = \left\{\  a \in \End(\VV^{\otimes k}) \  \big  |  \   a(g w) = ga(w) \hbox{ for all $g \in \G$, $w \in \VV^{\otimes k}$} \ \right\}.
\end{equation}
If the group $\G$ is apparent from the context,  we will simply write  $\Z_k$ for $\Z_k(\G)$.   
Since  $\VV^{\otimes 0} =  \G^{(0)}$, we have $\Z_0(\G) = \CC 1$. There is a natural embedding $\iota: \Z_k(\G) \hookrightarrow \Z_{k+1}(\G)$ given by 
\begin{equation}\label{embedding}
\begin{array}{cccc}
\iota:&\Z_k(\G) & \to & \Z_{k+1}(\G) \\
&a & \mapsto & a \otimes {\bf 1}
\end{array}
\end{equation}
where $a \otimes {\bf 1}$ acts as $a$ on the first $k$ tensor factors and ${\bf 1}$ acts as the identity in the $(k+1)$st tensor position. 
Iterating this embedding gives an infinite tower of algebras 
\begin{equation}\label{eq:tower}
\Z_0(\G) \subseteq \Z_1(\G) \subseteq \Z_2(\G) \subseteq \cdots .
\end{equation}

By classical double-centralizer theory (see for example \cite[Secs. 3B and 68]{CR}), we know the
following:
\begin{itemize}
\smallskip
\item[$\bullet$] $\Z_k(\G)$ is a semisimple associative $\CC$-algebra whose  irreducible modules $
\left\{\Z_k^\lambda \, \big | \,  \lambda\in \Lambda_k(\G) \right\}
$ are  labeled by $\Lambda_k(\G)$.  
\smallskip
\item[$\bullet$] $\dimm \Z_k^\lambda = m_k^\lambda = \vert\mathcal{W}_k^\lambda(\G)| = \vert\mathcal{P}_k^\lambda(\G)\vert$.
\smallskip
\item[$\bullet$] The edges from level  $k-1$ to level $k$ in $\mathcal{B}_{\VV}(\G)$ represent the induction and restriction rules  for $\Z_{k-1}(\G) \subseteq \Z_{k}(\G)$.  
\smallskip
\item[$\bullet$] If $d^\lambda = \dimm \G^\lambda$, then the tensor space $\VV^{\otimes k}$ has the following decompositions:  \begin{equation}
\begin{array}{rll}
\VV^{\otimes k} & \cong \displaystyle{\bigoplus_{\lambda\in \Lambda_k(\G)}}\,\, m_k^\lambda\, \G^\lambda & \hbox{ as a $\G$-module}, \\
& \cong \displaystyle{\bigoplus_{\lambda \in \Lambda_k(\G)}}\,  d^\lambda \, \Z_k^\lambda & \hbox{ as a $\Z_k(\G)$-module}, \\
& \cong \displaystyle{\bigoplus_{\lambda \in \Lambda_k(\G)}}\left(\G^\lambda \otimes \Z_k^\lambda \right) & \hbox{ as a $(\G,\Z_k(\G))$-bimodule}. \\
\end{array}
\end{equation}
As an immediate consequence of these isomorphisms, we have from counting dimensions that 
\begin{equation} \label{eq:bimoduleidentity}
(\dimm \VV)^k =  \sum_{\lambda \in \Lambda_k(\G)} d^\lambda  m_k^\lambda.
\end{equation}
\smallskip
\item[$\bullet$] By general Wedderburn theory, the dimension of $\Z_k(\G)$ is the sum of the squares of the dimensions of its irreducible modules,  
\begin{equation}\label{eq:evida}
\dimm \Z_k(\G)  = \sum_{\lambda \in \Lambda_k(\G)} (m_k^\lambda)^2 = \sum_{\lambda \in \Lambda_k(\G)} \vert\mathcal{W}_k^\lambda(\G)\vert^2
 = \sum_{\lambda \in \Lambda_k(\G)}   \vert\mathcal{P}_k^\lambda(\G)\vert ^2.
\end{equation}
Therefore, when the adjacency matrix of the  representation graph $\mathcal{R}_\VV(\G)$ is symmetric,  it follows as in \eqref{eq:walks} that  
\begin{equation}\label{eq:evid}
\dimm \Z_k(\G)  = \sum_{\lambda \in \Lambda_k(\G)} (m_k^\lambda)^2 = m_{2k}^{0} = \left\vert \mathcal{W}_{2k}^{0}(\G)\right\vert = 
\dimm \Z_{2k}^{(0)},
\end{equation}
the number of walks  of length $2k$ that begin and end at 0 on $\mathcal{R}_\VV(\G)$ (which is the associated affine Dynkin diagram when $\G$ is a finite subgroup of $\SU_2$).
\end{itemize}

\end{subsection}
\begin{subsection}{The Temperley-Lieb algebras}

Let $\S_k$ denote the symmetric group of permutations on $\{1,2, \ldots,k\}$, and let $\sigma \in \S_k$ act on a simple tensor by place permutation as follows:
\[
\sigma \cdot (\v_{r_1} \ot \v_{r_2} \ot \cdots \ot \v_{r_k}) = \v_{r_{\sigma^{-1}(1)}} \ot \v_{r_{\sigma^{-1}(2)}} \ot \cdots \ot \v_{r_{\sigma^{-1}(k)}}.
\]  
It is well known, and easy to verify, that under this action $\S_k$ commutes with $\SU_2$ on $\VV^{\otimes k}$.  Thus,  there is a representation $\Phi_k: \CC \S_k \to \End_{\SU_2}(\VV^{\otimes k})$;  however, this map is injective only for $k \le 2$.

For $1 \le i \le k-1$, let $\sigma_i = (i\ i+1) \in \S_k$ be the simple transposition that exchanges $i$ and $i+1$, and set
\begin{equation}\label{eidef}
e_i = 1 - \sigma_i. 
\end{equation}
Then $e_i$ acts on tensor space as
\begin{equation}\label{eq:eirep}
\e_i = \underbrace{{\bf 1} \otimes \cdots \otimes {\bf 1}}_{\text{$i-1$ factors}}\ \otimes\ \e \otimes\! \underbrace{ {\bf 1} \otimes \cdots \otimes {\bf 1}}_{\text{$k-i-1$ factors}}, 
\end{equation} 
where ${\bf 1}$ is the $2 \times 2$ identity matrix, which we identify with the identity map $\mathsf{id}_\VV$ of $\VV$, and  $\e: \VV \otimes \VV \to \VV \otimes \VV$ acts in tensor positions $i$ and $i+1$ by
\begin{equation}\label{edef}
 \e (\v_\ell \otimes \v_m) =  \v_\ell \otimes \v_m - \v_m \otimes \v_\ell, \qquad \ell,m \in \{-1,1\}.
\end{equation}
For any $\G \subseteq \SU_2$, the vector space $\VV^{\ot 2} = \VV \ot \VV$ decomposes into $\G$-modules as 
\[
\VV^{\ot 2} = \mathsf{A}(\VV^{\ot 2}) \oplus \mathsf{S}(\VV^{\ot 2}),
\]
where $\mathsf{A}(\VV^{\ot 2}) =\spann_\CC \{\v_{-1} \ot \v_1 - \v_1 \ot \v_{-1}\}$ are the antisymmetric tensors and 
$\mathsf{S}(\VV^{\ot 2}) = \spann_\CC \{\v_{-1} \ot \v_{-1}, \v_{-1}\ot \v_1 + \v_1 \ot \v_{-1}, \v_1 \ot \v_1\}$ are the symmetric tensors.
The operator $\e: \VV^{\ot 2}  \to \VV^{\ot 2} $ projects onto the $\G$-submodule  $\mathsf{A}(\VV^{\ot 2})$
and  $\frac{1}{2}\e$ is the corresponding idempotent.  

The image  $\ima(\Phi_k)$ of the representation $\Phi_k: \CC \S_k \to \End_{\SU_2}(\VV^{\otimes k})$ can be identified with the Temperley-Lieb algebra $\TL_k(2)$.
Recall that the  Temperley-Lieb algebra $\TL_k(2)$ is the unital associative algebra with generators $\e_1,\dots, \e_{k-1}$ 
 and relations
\begin{equation}\label{eq:TLrels}
\begin{array}{lll}
{\rm (TL1)}\qquad & \e_i^2 = 2 \e_i, & 1 \le i \le k-1, \\
{\rm (TL2)} &\e_i  \e_{i \pm 1} \e_i = \e_i, \quad & 1 \le i \le k-1, \qquad    \\
{\rm (TL3)} & \e_i \e_j = \e_j \e_i, & |i-j| > 1,
\end{array} \hskip1truein
\end{equation}
(see \cite{TL} and \cite{GHJ}).  Since the generator $\e_i$ in $\TL_k(2)$  is identified with the map in \eqref{eq:eirep},  we
are using the same notation for them.   If  $\Psi_k: \SU_2 \to  \End(\VV^{\otimes k})$ is the tensor-product representation,  then  $\SU_2$ and $\TL_k(2)$ generate full centralizers of each other in $\End(\VV^{\otimes k})$, so that 
\begin{equation}
\TL_k(2) \cong\ima(\Phi_k) = \End_{\SU_2} (\VV^{\otimes k}) \qquad\hbox{and}\qquad
\ima(\Psi_k) = \End_{\TL_k(2)} (\VV^{\otimes k}).
\end{equation}

Since $\TL_k(2) \cong \Z_k(\SU_2) = \End_{\SU_2} (\VV^{\otimes k})$, the set  
\[
\Lambda_k(\SU_2) = \begin{cases}
\{0,2,\ldots, k\}, & \text{if $k$ is even}, \\
\{1,3,\ldots, k\}, & \text{if $k$ is odd},
\end{cases}
\]
also indexes the irreducible $\TL_k(2)$-modules. 
The number of walks of length $k$ from 0 to $k-2\ell  \in \Lambda_k(\SU_2)$ on $\mathcal{R}_{\VV}(\SU_2)$ is equal to the number of walks from 0 to $k-2\ell$ on the set $\NN$ of natural numbers and is known to be (see \cite[p. 545]{Wb}, \cite[Sec.\ 5]{Jo})
\[
 \left \langle {{k}\atop {\ell}} \right\rangle  := {k \choose \ell}-{k \choose \ell-1}.
\]  
For each $k-2\ell \in \Lambda_k(\SU_2)$, where $\ell=0,1,\dots, \lfloor k/2\rfloor$,   let $\TL_k^{(k-2\ell)} = \Z_k^{(k-2\ell)}$ be the irreducible $\TL_k(2)$ module labeled by $k-2\ell$.  Then $\TL_k^{(k-2\ell)}$ has dimension $\left \langle {{k}\atop {\ell}} \right\rangle$, and these modules are constructed explicitly in \cite{Wb}.   Moreover,  \begin{equation}
\begin{array}{rll}
\VV^{\otimes k} & \cong \displaystyle{\bigoplus_{k-2\ell \in \Lambda_k(\SU_2)}}\,\, \left \langle {{k}\atop {\ell}} \right\rangle \VV(k-2\ell), & \hbox{ as an $\SU_2$-module}, \\
& \cong \displaystyle{\bigoplus_{k-2\ell \in \Lambda_k(\SU_2)}}\,  (k-2\ell+1) \TL_k^{(k-2\ell)}, & \hbox{ as a $\TL_k(2)$-module}, \\
& \cong \displaystyle{\bigoplus_{k-2\ell \in \Lambda_k(\SU_2)}}\left(\VV(k-2\ell) \otimes \TL_k^{(k-2\ell)} \right), & \hbox{ as an $(\SU_2,\TL_k(2))$-bimodule}. \\
\end{array}
\end{equation}
The dimension of $\TL_k(2)$ is given by   the Catalan number ${\mathcal C}_k
= \frac{1}{k+1}{{2k} \choose k}$, as can be seen in the right-hand column of the 
Bratteli diagram for $\SU_2$ in Figure \ref{fig:SU_2Bratteli}.

\end{subsection}

\begin{subsection} {The Jones basic construction}
Let  $\G$ be a subgroup of $\SU_2$ such that $\G \neq \{{\bf 1}\}, \{{\bf -1}, {\bf 1}\}$.
Any transformation that commutes with $\SU_2$ on $\VV^{\ot k}$  also commutes with $\G$.  Thus,  we have the reverse inclusion of centralizers $\TL_k(2) =\End_{\SU_2}(\VV^{\otimes k}) \subseteq \End_{\G}(\VV^{\otimes k}) = \Z_k(\G)$
and  identify the subalgebra of  $ \End_{\G}(\VV^{\otimes k})$  generated by the $\e_i$ in \eqref{eq:eirep} with $\TL_k(2)$.  
In this section, we use the Jones basic construction to find additional generators for the
centralizer algebra $\Z_k = \Z_k(\G) =  \End_{\G}(\VV^{\otimes k})$ for each $k$.   The construction uses the
natural embedding  of $\Z_k$ into $\Z_{k+1}$ given by  $a  \mapsto  a \ot {\bf 1}$, which holds for any $k \geq 1$.

In what follows,   if $\mathsf{q} = (q_1,\dots, q_{k}) \in \{-1,1\}^{k}$ and $
\mathsf{r} = (r_1,\dots, r_\ell) \in \{-1,1\}^\ell$ for some $k,\ell \geq 1$,  then $[\mathsf{q},\mathsf{r}] = (q_1,\dots,q_k,r_1,\dots,r_\ell) 
\in \{-1,1\}^{k+\ell}$ is the concatenation of the two tuples.   In particular,  if $t \in \{-1,1\}$,  then $[\mathsf{q},t] = (q_1,\dots, q_k,t)$.     

Now if  $a \in  \End(\VV^{\ot k})$,   say  $a = \sum_{\mathsf{r,s} \in \{-1,1\}^k } a^{\mathsf r}_{\mathsf s} \mathsf{E}_{\mathsf r, \mathsf s}$,
where $\mathsf{E}_{\mathsf r,\mathsf s}$ is the standard matrix unit,  then  under the embedding $a \mapsto  a \ot {\bf 1}$,   

\begin{equation}\label{eq:embed} a^{[{\mathsf r}, r_{k+1}]}_{[{\mathsf s}, s_{k+1}]}  = (a \ot {\bf 1})^{[{\mathsf r}, r_{k+1}]}_{[{\mathsf s}, s_{k+1}]}  = \delta_{r_{k+1},s_{k+1}} a^{\mathsf r}_{\mathsf s}, \end{equation} 
where $r_{k+1}, s_{k+1} \in \{-1,1\}$.   \msk

{\it Note  in this section we are writing $a^{\mathsf r}_{\mathsf s}$ rather
than $a_{\mathsf{r, s}}$ to simplify the notation.} \msk

Define a map $\varepsilon_k:  \End(\VV^{\ot k})  \rightarrow \End(\VV^{\ot (k-1)})$,  
$\varepsilon_k(a) = \sum_{\mathsf p, \mathsf q \in \{-1,1\}^{k-1}} \varepsilon_k(a)^{\mathsf p}_{\mathsf q} \mathsf{E_{p,q}}$,
called the {\it conditional expectation}, such
that 
\begin{equation}\label{eq:varep} \varepsilon_k(a)^{\mathsf p}_{\mathsf q}  =  \half \left(a^{[\mathsf p, -1]}_{[\mathsf q,-1]} +   a^{[\mathsf p, 1]}_{[\mathsf q,1]}\right)\end{equation} 
for all $\mathsf p, \mathsf q \in \{-1,1\}^{k-1}$ and all $a \in \End(\VV^{\ot k})$.  

\begin{prop}\label{prop:condexp}  Assume $k \geq 1$.
\begin{itemize}
\item[{\rm (a)}]  If  $a \in \End(\VV^{\ot k}) \subseteq \End(\VV^{\ot (k+1)})$,  then $\e_k a \e_k  = 2 \varepsilon_k(a) \ot \e = 2\varepsilon_k(a) \e_k$. 
\item[{\rm (b)}] If $a \in \Z_k$, then $\varepsilon_k(a) \in \Z_{k-1}$, so that $\varepsilon_k: \Z_k \rightarrow \Z_{k-1}$. 
\item[{\rm (c)}]  $\varepsilon_k: \Z_k \rightarrow \Z_{k-1}$ is a $(\Z_{k-1}, \Z_{k-1})$-bimodule map; that is,  $\varepsilon_k(a_1 b a_2) 
= a_1 \varepsilon_k(b) a_2$ for all $a_1, a_2 \in \Z_{k-1} \subseteq \Z_k$, $b \in \Z_k$.   In particular,  $\varepsilon_k(a) = a$
for all $a \in \Z_{k-1}$.  
\item[{\rm (d)}]  Let $\Tr_k$ denote the usual (nondegenerate)  trace on $\End(\VV^{\otimes k})$.    Then for
all $a \in \Z_k$ and $b \in \Z_{k-1}$,  we have $\Tr_k(ab) = \Tr_{k}(\varepsilon_k(a)b)$.  
\end{itemize}
\end{prop}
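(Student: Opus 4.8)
The plan is to exploit the structural fact that $\varepsilon_k$ is, up to the scalar $\tfrac12$, the partial trace over the last tensor factor: by \eqref{eq:varep}, $2\,\varepsilon_k(a)^{\mathsf p}_{\mathsf q}=\sum_{t\in\{-1,1\}}a^{[\mathsf p,t]}_{[\mathsf q,t]}$. With this in hand each of (a)--(d) reduces to a short computation in the index notation of \eqref{eq:embed}--\eqref{eq:varep}, where a tuple in $\{-1,1\}^k$ is split as $[\mathsf r',r_k]$ with $\mathsf r'\in\{-1,1\}^{k-1}$.

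For (a) I would exploit that $\tfrac12\e$ is the rank-one idempotent projecting $\VV^{\ot2}$ onto the line $\mathsf A(\VV^{\ot2})=\CC w$, $w=\v_{-1}\ot\v_1-\v_1\ot\v_{-1}$, along the symmetric tensors (as recalled after \eqref{edef}). For any $T\in\End(\VV^{\ot2})$ the compression $(\tfrac12\e)\,T\,(\tfrac12\e)$ is a scalar multiple of $\tfrac12\e$, and the scalar is read off from $(\tfrac12\e)Tw\in\CC w$; with $T=\mathsf E_{ij}\ot{\bf 1}$ (the $\mathsf E_{ij}$ being the matrix units on $\VV$, $i,j\in\{-1,1\}$) a one-line evaluation on $w$ yields
\[(\tfrac12\e)(\mathsf E_{ij}\ot{\bf 1})(\tfrac12\e)=\tfrac12\,\delta_{ij}\,(\tfrac12\e).\]
Now expand $a\ot{\bf 1}=\sum a^{[\mathsf r',i]}_{[\mathsf s',j]}\,\mathsf E_{\mathsf r',\mathsf s'}\ot\mathsf E_{ij}\ot{\bf 1}$ on $\VV^{\ot(k-1)}\ot\VV\ot\VV$ and compress in the last two positions by $\tfrac12\e_k={\bf 1}^{\ot(k-1)}\ot(\tfrac12\e)$: the displayed identity collapses the sum to $i=j$ with a factor $\tfrac12$, giving $(\tfrac12\e_k)(a\ot{\bf 1})(\tfrac12\e_k)=\varepsilon_k(a)\ot(\tfrac12\e)$, hence $\e_k a\e_k=2\,\varepsilon_k(a)\ot\e$. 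Finally $\varepsilon_k(a)\ot\e=(\varepsilon_k(a)\ot{\bf 1}\ot{\bf 1})\e_k$ because $\e_k={\bf 1}^{\ot(k-1)}\ot\e$, which under the embedding reads $\varepsilon_k(a)\,\e_k$. (Alternatively one can just multiply the three matrices entrywise using \eqref{eq:embed} and \eqref{edef}; that is the step I expect to carry the most bookkeeping, although it is entirely mechanical.)

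I would prove (c) before (b), since it uses nothing about $\G$. For $a_1,a_2\in\End(\VV^{\ot(k-1)})$ embedded as $a_i\ot{\bf 1}$ and arbitrary $b\in\End(\VV^{\ot k})$, expanding the matrix product and using that the $\bf 1$ factors freeze the last tensor coordinate throughout gives $(a_1 b a_2)^{[\mathsf p,t]}_{[\mathsf q,t]}=\sum_{\mathsf u,\mathsf w}(a_1)^{\mathsf p}_{\mathsf u}\,b^{[\mathsf u,t]}_{[\mathsf w,t]}\,(a_2)^{\mathsf w}_{\mathsf q}$; summing over $t$ against $\tfrac12$ and applying \eqref{eq:varep} pulls $a_1,a_2$ outside and leaves $\varepsilon_k(b)^{\mathsf u}_{\mathsf w}$ in between, so $\varepsilon_k(a_1 b a_2)=a_1\,\varepsilon_k(b)\,a_2$; the ``in particular'' is the case $b={\bf 1}$ together with $\varepsilon_k({\bf 1})={\bf 1}$, or directly $(a\ot{\bf 1})^{[\mathsf p,t]}_{[\mathsf q,t]}=a^{\mathsf p}_{\mathsf q}$. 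For (b) I would feed $a\in\Z_k$ into (a): by \eqref{embedding} the embedding $\Z_k\hookrightarrow\Z_{k+1}$ is well defined and $\e_k\in\TL_{k+1}(2)\subseteq\Z_{k+1}$, so $\e_k(a\ot{\bf 1})\e_k\in\Z_{k+1}$, and by (a) it equals $2\,\varepsilon_k(a)\ot\e$. Writing $g^{\ot(k+1)}=g^{\ot(k-1)}\ot g^{\ot2}$ and using $\e g^{\ot2}=g^{\ot2}\e$ (as $\e\in\Z_2(\G)$), the commutation of $\varepsilon_k(a)\ot\e$ with every $g^{\ot(k+1)}$ becomes $\big(\varepsilon_k(a)g^{\ot(k-1)}-g^{\ot(k-1)}\varepsilon_k(a)\big)\ot\big(\e g^{\ot2}\big)=0$; since $g^{\ot2}$ is invertible and $\e\neq0$, the second tensor factor is nonzero and therefore the first vanishes, i.e.\ $\varepsilon_k(a)\in\End_\G(\VV^{\ot(k-1)})=\Z_{k-1}$. (One can also argue without (a): $\varepsilon_k=\tfrac12\,\Tr_{(k)}$ with $\Tr_{(k)}$ the partial trace over the $k$-th factor, and $\Tr_{(k)}$ intertwines conjugation by $h\ot m$ with conjugation by $h$; applied to $h=g^{\ot(k-1)}$, $m=g$ and $x=a$ this gives the claim immediately. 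I would record this but keep the (a)-based version in the spirit of the basic construction.)

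Part (d) is once more entrywise. With $b=b'\ot{\bf 1}$, $b'\in\End(\VV^{\ot(k-1)})$, the $\bf 1$ again freezes the last coordinate, so $\Tr_k(ab)=\sum_{\mathsf r',\mathsf s',t}a^{[\mathsf r',t]}_{[\mathsf s',t]}(b')^{\mathsf s'}_{\mathsf r'}=2\sum_{\mathsf r',\mathsf s'}\varepsilon_k(a)^{\mathsf r'}_{\mathsf s'}(b')^{\mathsf s'}_{\mathsf r'}=2\,\Tr_{k-1}\!\big(\varepsilon_k(a)b'\big)$ by \eqref{eq:varep}; on the other hand $\Tr_k(\varepsilon_k(a)b)=\Tr_k\big((\varepsilon_k(a)b')\ot{\bf 1}\big)=\Tr_{k-1}\!\big(\varepsilon_k(a)b'\big)\cdot\Tr_1({\bf 1}_\VV)=2\,\Tr_{k-1}\!\big(\varepsilon_k(a)b'\big)$, so the two coincide. (Conceptually, $\Tr_k=\Tr_{k-1}\circ\Tr_{(k)}$ together with the $\End(\VV^{\ot(k-1)})$-bilinearity of $\Tr_{(k)}$ from (c).) I do not foresee a genuine obstruction anywhere; the only places I expect to require care are the index bookkeeping in (a) and the small observation, used in (b), that a tensor product with one nonzero factor vanishes only if the other factor does.
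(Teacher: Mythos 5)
Your argument is correct, and parts (b), (c), (d) track the paper's proof closely (the paper handles (b) with the same ``actions occur independently on the first $k-1$ and last two tensor slots'' observation that you make precise by writing the commutator as a simple tensor with a nonzero right factor; (c) and (d) are the same index computations).

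Part (a) is where you genuinely diverge. The paper verifies $\e_k a\e_k = 2\varepsilon_k(a)\ot\e$ by applying both sides to an arbitrary simple tensor $\v_{\mathsf r}$, splitting into the cases $r_k=r_{k+1}$ (both sides vanish) and $(r_k,r_{k+1})=(\mp1,\pm1)$, and matching coefficients after a direct expansion. You instead exploit that $\tfrac12\e$ is the rank-one idempotent onto the antisymmetric line $\CC w$: the compression $(\tfrac12\e)T(\tfrac12\e)$ of any $T\in\End(\VV^{\ot2})$ is $\lambda(T)\cdot\tfrac12\e$ with $\lambda$ a linear functional, so it suffices to evaluate on the matrix-unit generators $T=\mathsf E_{ij}\ot{\bf 1}$, which gives $\lambda=\tfrac12\delta_{ij}$; expanding $a\ot{\bf 1}$ in matrix units and compressing in the last two slots then collapses the sum to the partial trace over the $k$th factor, which by \eqref{eq:varep} is $2\varepsilon_k(a)$. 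I checked the compression identity $(\tfrac12\e)(\mathsf E_{ij}\ot{\bf 1})(\tfrac12\e)=\tfrac12\delta_{ij}(\tfrac12\e)$ on all four $(i,j)$ and it holds. Your route buys conceptual transparency — it makes visible that $\varepsilon_k$ is (up to the normalization $\tfrac12$) the partial trace over the last factor, which you then reuse in (b) and (d) — at the small cost of invoking the rank-one structure of $\e$; the paper's route is entirely elementary but opaque. Both are sound.
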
 
\begin{proof}  (a)  It suffices to show that these expressions have the same action on a simple tensor $\v_{\mathsf r}$
where $\mathsf{r}  = (r_1, \dots, r_k, r_{k+1}) \in \{-1,1\}^{k+1}$.   If $r_k = r_{k+1}$,  then both 
$\e_k a \e_k$ and  $2 \varepsilon_k(a) \ot \e$ act as 0 on $\v_{\mathsf r}$.   So we suppose that $(r_k, r_{k+1}) = (-1,1)$,  and
let $\mathsf{p} = (r_1,\dots, r_{k-1})$.    Now
\begin{eqnarray*} (\e_k a \e_k)\v_{\mathsf r}  &=&  \e_k a\left (\v_{\mathsf p} \ot \v_{-1} \ot \v_1 - \v_{\mathsf p} \ot \v_{1} \ot \v_{-1}\right) \\
&=&  \e_k \sum_{\mathsf q \in \{-1,1\}^{k}}a^{\mathsf q}_{[{\mathsf p},-1]} \v_{\mathsf q}  \ot \v_1 -  \e_k \sum_{\mathsf q \in \{-1,1\}^{k}}a^{\mathsf q}_{[\mathsf p,1]} \v_{\mathsf q}  \ot \v_{-1} \\
&=& \sum_{\mathsf n \in \{-1,1\}^{k-1}}a^{[\mathsf n,-1]}_{[\mathsf p,-1]}\left( \v_{\mathsf n}  \ot \v_{-1}\ot \v_1 -\v_{\mathsf n}  \ot \v_{1}\ot \v_{-1} \right) \\
&& \qquad -\sum_{\mathsf n \in \{-1,1\}^{k-1}}a^{[\mathsf n,1]}_{[\mathsf p,1]}\left( \v_{\mathsf n}  \ot \v_{1}\ot \v_{-1} -  
\v_{\mathsf n}  \ot \v_{-1}\ot \v_{1} \right).\end{eqnarray*}  
Thus, the coefficient of $\v_{\mathsf n} \ot \v_{-1} \ot \v_1$ is $a^{[\mathsf n,-1]}_{[\mathsf p,-1]} + a^{[\mathsf n,1]}_{[\mathsf p,1]}$,
and the coefficient of $\v_{\mathsf n} \ot \v_{1} \ot \v_{-1}$ is the negative of that expression.   These are
exactly the coefficients that we get when $2 \varepsilon_k(a) \ot \e$ acts on $\v_{\mathsf r}$.   The proof when 
$(r_k, r_{k+1}) = (1,-1)$ is completely analogous. 

(b) When $a \in \Z_k \subseteq \Z_{k+1}$,  then $ \e_k a \e_k  \in \Z_{k+1}$, so from part (a) it follows that 
$\varepsilon_k(a) \ot \e \in \Z_{k+1}$.  Therefore, if $g \in \G$,  then $\varepsilon_k(a) \ot \e \in \Z_{k+1}$ commutes
with $g^{\ot (k+1)} = g^{\ot (k-1)} \ot g^{\ot 2}$ on $\VV^{(k-1)} \ot \VV^{\ot 2}$.   Since these actions occur independently 
on the first $k-1$ and last 2 tensor slots,  $\varepsilon_k(a)$ commutes with $g^{\ot (k-1)}$ for all $g \in \G$.   Hence
$\varepsilon_k(a) \in \Z_{k-1}$.   

(c)  Let $a_1, a_2 \in \Z_{k-1}$ and $b \in \Z_k$.   Then using \eqref{eq:embed} and \eqref{eq:varep} we have for
$\mathsf{p,q} \in \{-1,1\}^{k-1}$,   
\begin{eqnarray*} \varepsilon_k(a_1ba_2)_{\mathsf q}^{\mathsf p} &=&  \half \sum_{t \in \{-1,1\}}  (a_1 b a_2)^{[{\mathsf p},t]}_{[\mathsf q,t]}
= \half  \sum_{t \in \{-1,1\}} \sum_{\mathsf{m,n} \in \{-1,1\}^{k-1}} (a_1)^{[{\mathsf p},t]}_{[{\mathsf n},t]}b^{[{\mathsf n},t]}_{[{\mathsf m},t]}   (a_2)^{[{\mathsf m},t]}_{[{\mathsf q},t]} \\
&=& \half   \sum_{t \in \{-1,1\}} \sum_{\mathsf{m,n} \in \{-1,1\}^{k-1}} (a_1)^{{\mathsf p}}_{{\mathsf n}}b^{[{\mathsf n},t]}_{[{\mathsf m},t]}   (a_2)^{{\mathsf m}}_{{\mathsf q}} \\
&=& \sum_{\mathsf{m,n} \in \{-1,1\}^{k-1}} (a_1)^{{\mathsf p}}_{{\mathsf n}}\left(\half   \sum_{t \in \{-1,1\}} b^{[{\mathsf n},t]}_{[{\mathsf m},t]}\right)   (a_2)^{{\mathsf m}}_{{\mathsf q}} \\
&=&\left(a_1 \varepsilon_k(b) a_2 \right)_{\mathsf q}^{\mathsf p}.  
\end{eqnarray*} 

(d)  Let $a \in \Z_k$ and $b \in \Z_{k-1} \subseteq \Z_k$.    Then applying \eqref{eq:embed} gives
\begin{eqnarray*} \Tr_k(ab) &=& \sum_{\mathsf r \in \{-1,1\}^k} \sum_{\mathsf s \in \{-1,1\}^k}   a^{\mathsf r}_{\mathsf s} b^{\mathsf s}_{\mathsf r} =  \sum_{\mathsf{r} = [\mathsf{r'}, r_{k}]  \in \{-1,1\}^k} \sum_{\mathsf s = [\mathsf{s'},s_{k}]  \in \{-1,1\}^k}   a^{\mathsf r}_{\mathsf s} \delta_{r_k, s_k} b^{\mathsf s'}_{\mathsf r'} \\
&=& \sum_{\mathsf{r'}\in \{-1,1\}^{k-1}} \sum_{\mathsf{s'} \in \{-1,1\}^{k-1}} \left(a^{[\mathsf{r'}, -1]}_{[\mathsf{s'},-1]} +
a^{[\mathsf{r'}, 1]}_{[\mathsf{s'},1]} \right) b^{\mathsf s'}_{\mathsf r'} \\
&=& \sum_{\mathsf{r'}\in \{-1,1\}^{k-1}} \sum_{\mathsf{s'} \in \{-1,1\}^{k-1}} 2 \varepsilon_k(a)^{\mathsf r'}_{\mathsf s'}b^{\mathsf s'}_{\mathsf r'} \\
&=&\sum_{\mathsf{r} = [\mathsf{r'}, r_{k}]  \in \{-1,1\}^k} \sum_{\mathsf s = [\mathsf{s'},s_{k}]  \in \{-1,1\}^k}  \delta_{r_k, s_k}  \varepsilon_k(a)^{\mathsf r}_{\mathsf s}b^{\mathsf s}_{\mathsf r} \\
&=& \Tr_k(\varepsilon_k(a)b).
\end{eqnarray*}   \end{proof}

Relative to the inner product $\langle, \rangle: \Z_k \times \Z_k \to \CC$ defined by
$\langle a, b \rangle = \Tr_k(ab),$ for all $a,b \in \Z_k$,  the conditional expectation $\varepsilon_k$  is the orthogonal  projection  $\varepsilon_k: \Z_k \to \Z_{k-1}$ with respect to $\langle, \rangle$  since $\langle a-\varepsilon_k(a), b\rangle = \Tr_k(ab)-\Tr_k(\varepsilon_k(a)b) = 0$ by Proposition \ref{prop:condexp} (d). 
Its values are uniquely determined by the 
nondegeneracy of the trace.   

Proposition  \ref{prop:condexp}\,(a) tells us that $\Z_k \e_k \Z_k$ is a subalgebra of $\Z_{k+1}$.   Indeed,
for $a_1,a_2, a_1',$  $a_2'$ in $\Z_k$,  we have 
\[(a_1 \e_k a_2)(a_1'\e_k a_2') = a_1 \e_k (a_2 a_1') \e_k a_2' = 2 a_1 \varepsilon_k(a_2 a_1') \e_k a_2' \in \Z_k \e_k \Z_k.\]
Part (b) of the next result says that in fact $\Z_k \e_k \Z_k$ is an ideal of $\Z_{k+1}$.  

\begin{prop}\label{prop:ZkZk1}  For all $k \geq 0$,  \begin{itemize}
\item[{\rm (a)}]  For $a \in \End(\VV^{\ot (k+1)})$ there is a {\it unique} $b \in \End(\VV^{\ot k})$  so 
$a \e_k  = (b \ot {\bf 1})\e_k$,  and  for all $\mathsf{r,s} \in \{-1,1\}^k$, 
\begin{equation}\label{eq:brs} b_{\mathsf s}^{\mathsf r} =  \half\left(a_{[\mathsf{s},-s_k]}^{[\mathsf{r}, -s_k]} - a^{[\mathsf{r},-s_k]}_{[\mathsf{s'},-s_k,s_k]}\right), \end{equation}
where  $\mathsf{s' }= (s_1,\dots, s_{k-1})$, and  $\mathsf{s} = [\mathsf{s'}, s_k]$.   If $a \in \Z_{k+1}$,  then $b \in \Z_k$.   
\item[{\rm (b)}] $\Z_k \e_k \Z_k = \Z_{k+1} \e_k \Z_{k+1}$ is an ideal of $\Z_{k+1}$.
\item[{\rm (c)}]  The map $\Z_k \rightarrow \Z_k \e_k \subseteq \Z_{k+1}$ given by $a \mapsto a\e_k$ is injective.
\end{itemize}
\end{prop}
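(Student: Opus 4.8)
The plan is to establish the three parts of Proposition~\ref{prop:ZkZk1} in order, using Proposition~\ref{prop:condexp} as the main tool. For part (a), I would first observe that the map $b \mapsto (b \otimes {\bf 1})\e_k$ from $\End(\VV^{\ot k})$ into $\End(\VV^{\ot(k+1)})\e_k$ is a linear bijection onto its image: injectivity follows because $\e_k$ acts invertibly (up to the scalar $2$) on the antisymmetric slot $k,k+1$, so one can recover $b$ from $(b\otimes{\bf 1})\e_k$; and then every element of $\End(\VV^{\ot(k+1)})\e_k$ has this form by a dimension count, or more directly by solving for the matrix entries. To get the explicit formula \eqref{eq:brs}, I would compute the matrix entries of $(b\otimes{\bf 1})\e_k$ using \eqref{eq:embed} and \eqref{edef}: acting on $\v_{[\mathsf s', s_k, -s_k]} - \v_{[\mathsf s', -s_k, s_k]}$ (the image under $\e$ in positions $k,k+1$), and matching coefficients against $a\e_k$ applied to the same tensor, yields a linear system that solves to \eqref{eq:brs}. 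The claim that $a\in\Z_{k+1}$ implies $b\in\Z_k$: since $\e_k\in\Z_{k+1}$ commutes with $\G$, the product $a\e_k\in\Z_{k+1}$, and the antisymmetrizer in the last two slots decouples the $\G$-action there from the first $k-1$ slots (as in the proof of Proposition~\ref{prop:condexp}(b)); so $b\otimes{\bf 1}$ commutes with $g^{\ot(k+1)}$, hence $b$ commutes with $g^{\ot k}$.

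For part (b), I would show $\Z_k\e_k\Z_k$ is a two-sided ideal of $\Z_{k+1}$. Closure under multiplication within $\Z_k\e_k\Z_k$ is already noted in the text via Proposition~\ref{prop:condexp}(a). For the ideal property, take $a_1\e_k a_2 \in \Z_k\e_k\Z_k$ with $a_i\in\Z_k$, and $c\in\Z_{k+1}$; I must show $c(a_1\e_k a_2)\in\Z_k\e_k\Z_k$ (and symmetrically on the right). Write $c(a_1\e_k a_2) = (c a_1')\e_k a_2$ where the point is to absorb $c a_1$ — but $c a_1\notin\Z_k$ in general, so instead I apply part (a): $c a_1 \e_k = (b\otimes{\bf 1})\e_k$ for a unique $b\in\Z_k$ (using that $c a_1\in\Z_{k+1}$), whence $c a_1\e_k a_2 = (b\otimes{\bf 1})\e_k a_2 = b\e_k a_2\in\Z_k\e_k\Z_k$. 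The right-multiplication case is handled symmetrically, either by a mirror-image version of part (a) or by applying an anti-automorphism (transpose). Finally, $\Z_k\e_k\Z_k = \Z_{k+1}\e_k\Z_{k+1}$: the inclusion $\subseteq$ is clear since $\Z_k\subseteq\Z_{k+1}$; for $\supseteq$, given $c_1\e_k c_2$ with $c_i\in\Z_{k+1}$, apply part (a) to $c_1\e_k = (b\otimes{\bf 1})\e_k$ with $b\in\Z_k$ and the transposed statement to $\e_k c_2$, reducing to the smaller algebra.

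For part (c), I would show $a\mapsto a\e_k$ is injective on $\Z_k$. Suppose $a\e_k = 0$ with $a\in\Z_k\subseteq\Z_{k+1}$. By Proposition~\ref{prop:condexp}(a), $\e_k a\e_k = 2\varepsilon_k(a)\e_k$, so $\e_k(a\e_k) = 0$ gives $\varepsilon_k(a)\e_k = 0$; but by Proposition~\ref{prop:condexp}(c), $\varepsilon_k(a) = a$ since $a\in\Z_{k-1}$... wait, $a$ need not lie in $\Z_{k-1}$. Instead I argue directly: $a\e_k = 0$ means $(a\otimes{\bf 1})$ kills the image of $\e_k$, i.e. $(a\otimes{\bf 1})$ annihilates $\VV^{\ot(k-1)}\otimes\mathsf{A}(\VV^{\ot 2})$. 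But $a\otimes{\bf 1}$ acts as $a$ on the first $k$ slots and trivially on slot $k+1$; testing on $\v_{\mathsf p}\otimes(\v_{-1}\otimes\v_1 - \v_1\otimes\v_{-1})$ for $\mathsf p\in\{-1,1\}^{k-1}$, the condition $(a\otimes{\bf 1})(\v_{\mathsf p}\otimes\v_{-1}\otimes\v_1 - \v_{\mathsf p}\otimes\v_1\otimes\v_{-1}) = 0$ forces, after reading off coefficients, that $a(\v_{[\mathsf p,-1]})\otimes\v_1 = a(\v_{[\mathsf p,1]})\otimes\v_{-1}$ in $\VV^{\ot k}\otimes\VV$, which is impossible for nonzero terms unless $a(\v_{[\mathsf p,-1]}) = a(\v_{[\mathsf p,1]}) = 0$ for all $\mathsf p$, giving $a=0$. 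Alternatively and more cleanly: $\frac12\e_k$ is an idempotent, and for $a\in\Z_k$ one checks $a\e_k = 0 \Rightarrow a = 0$ because the ``cap-cup'' $\e_k$ has a partial inverse — precisely, $a = (\text{something})(a\e_k)(\text{something})$ via the Temperley-Lieb relation. The main obstacle in the whole proposition is getting the explicit entrywise formula \eqref{eq:brs} in part (a) correct, since the sign bookkeeping from the antisymmetrizer $\e$ in positions $k$ and $k+1$ is delicate; once (a) is in hand with its $\G$-equivariance clause, parts (b) and (c) follow by the reductions sketched above with only routine checking.
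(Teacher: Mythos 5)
Your plan follows the paper's: derive \eqref{eq:brs} by equating matrix entries of $a\e_k$ and $(b\ot{\bf 1})\e_k$ to solve for $b$, obtain uniqueness from injectivity of $b\mapsto(b\ot{\bf 1})\e_k$, and then reduce parts (b) and (c) to part (a) (for (b), your reduction via $ca_1\e_k=(b\ot{\bf 1})\e_k$ and its left-right mirror is exactly the paper's observation that $\Z_{k+1}\e_k=\Z_k\e_k$ and $\e_k\Z_{k+1}=\e_k\Z_k$).

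One step in (a) needs tightening. The deduction ``so $b\ot{\bf 1}$ commutes with $g^{\ot(k+1)}$'' does not follow from a decoupling argument in the style of Proposition~\ref{prop:condexp}(b): there the operators $\varepsilon_k(a)$ and $\e$ act on disjoint tensor blocks, whereas here $b\ot{\bf 1}$ and $\e_k$ overlap in slot $k$, so one cannot simply read off commutation from a tensor factorization. The correct route is the uniqueness you have already established. Since $a\e_k\in\Z_{k+1}$ and $g\e_k g^{-1}=\e_k$ for $g\in\G$, conjugating $a\e_k=(b\ot{\bf 1})\e_k$ by $g$ gives $(b\ot{\bf 1})\e_k=(gbg^{-1}\ot{\bf 1})\e_k$, and uniqueness then forces $gbg^{-1}=b$, i.e.\ $b\in\Z_k$. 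Finally, note that your direct argument for (c) (testing $a\e_k=0$ on $\v_{\mathsf p}\ot\v_{-1}\ot\v_1 - \v_{\mathsf p}\ot\v_1\ot\v_{-1}$) is precisely the injectivity of $b\mapsto(b\ot{\bf 1})\e_k$, so it is already contained in the uniqueness claim of (a); the paper dispatches (c) in one line for this reason, and you need not chase the vaguer ``partial inverse via Temperley--Lieb'' alternative.
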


\begin{proof}  (a)  First note that for all $\mathsf{p,q} \in \{-1,1\}^{k+1}$,  
\begin{equation}\label{eq:eek}  (\e_k)^{\mathsf p}_{\mathsf q} = \frac{1}{4}\bigg(\prod_{j=1}^{k-1} \delta_{p_j,q_j}\bigg)(p_{k+1}-p_k)(q_{k+1}-q_k). \end{equation}  
Now assume $a \in \End(\VV^{\ot (k+1)})$,  $b \in \End(\VV^{\ot k})$ and  $\mathsf{n,q} \in \{-1,1\}^{k+1}$, and let $\mathsf{q}'' = (q_1, \dots, q_{k-1})$.   Then  

\begin{eqnarray*} (a\e_k)^{\mathsf n}_{\mathsf q}  &=&  \sum_{\mathsf{p} \in \{-1,1\}^{k+1}}  a^{\mathsf n}_{\mathsf p}\, (\e_k)_{\mathsf q}^{\mathsf p} \\
 &=&  \frac{1}{4}\sum_{\mathsf{p} \in \{-1,1\}^{k+1}} a^{\mathsf n}_{\mathsf p}\left(\prod_{j=1}^{k-1} \delta_{p_j,q_j}\right)(p_{k+1}-p_k)(q_{k+1}-q_k),\\
 &=&  \frac{1}{4}\sum_{p_k,p_{k+1} \in \{-1,1\}}a^{\mathsf n}_{[\mathsf q'',p_k, p_{k+1}]}(p_{k+1}-p_k)(q_{k+1}-q_k), \\
  &=&  \frac{1}{2}(q_{k+1}-q_k) \left( a^{\mathsf n}_{[\mathsf{q''}, -1,1]} - a^{\mathsf n}_{[\mathsf{q''}, 1,-1]}\right),   \qquad \hbox{\rm while}\\
\end{eqnarray*}
\begin{eqnarray*}
 \left( (b \ot {\bf 1})\e_k\right)^{\mathsf n}_{\mathsf q}  &=&  \sum_{\mathsf{p} \in \{-1,1\}^{k+1}}(b \ot {\bf 1})^{\mathsf n}_{\mathsf p}\,(\e_k)
 ^{\mathsf p}_{\mathsf q}\\  &=& \frac{1}{4}\sum_{\mathsf{p} \in \{-1,1\}^{k+1}} (b \ot {\bf 1})^{\mathsf n}_{\mathsf p}\left(\prod_{j=1}^{k-1} \delta_{p_j,q_j}\right)(p_{k+1}-p_k)(q_{k+1}-q_k)\\ 
 &=& \frac{1}{4}(q_{k+1}-q_k)\sum_{p_k \in \{-1,1\}} (b \ot {\bf 1})^{\mathsf n'}_{[\mathsf q'',p_k]}(n_{k+1}-p_k) \    \hbox{{\rm where} \ $\mathsf{n'} = (n_1,\dots, n_{k})$} \\ 
 &=& \frac{1}{2}(q_{k+1}-q_k)\left(b^{\mathsf n'}_{[\mathsf{q''},-1] }(n_{k+1}+1) +b^{\mathsf n'}_{[\mathsf{q''},1] }(n_{k+1}-1) \right). \end{eqnarray*} Therefore $a \e_k  = (b \ot {\bf 1})\e_k$ if and only if 
$b^{\mathsf n'}_{\mathsf q'}  =   \frac{1}{2} \left( a^{[\mathsf{n'},-q_k]}_{[\mathsf{q''}, q_k,-q_k]} - a^{[\mathsf{n'},q_{k}]}_{[\mathsf{q''}, -q_k,q_k]}\right)$  for all 
$\mathsf{n,q} \in \{-1,1\}^k$.   Setting $\mathsf{r}
= \mathsf{n'}$ and $\mathsf{s} = (s_1,\dots, s_k) = \mathsf{q'}$ gives the expression in  \eqref{eq:brs}.  
Now assume $a \in \Z_{k+1}$ and $b$ is the unique element in $\End(\VV^{\ot k})$ so that $a \e_k = (b \ot {\bf 1})\e_k$. 
Then $a \e_k\in \Z_{k+1}$ so that $a \e_k = ga \e_k g^{-1} $ for all $g \in \G$.   It follows that $(b \ot {\bf 1})\e_k = g(b \ot {\bf 1})\e_k g^{-1} = g(b \ot {\bf 1})g^{-1}\e_k 
= (gbg^{-1} \ot {\bf 1})\e_k$.   The uniqueness of $b$ forces $gbg^{-1} = b$ to hold for all $g \in \G$, so that $b \in \Z_k$
as claimed in (a).    

(b) Part (a) implies that  $\Z_{k+1}\e_k = \Z_k \e_k$ (where $\Z_k$ is identified with $\Z_k \ot {\bf 1}$).   A symmetric argument
gives $\e_k \Z_{k+1} =  \e_k \Z_k$, and it follows that  $\Z_k \e_k \Z_k = \Z_{k+1}\e_k \Z_{k+1}$ is  an ideal of $\Z_{k+1}$.  Part (c) 
is a consequence of the uniqueness of $b$ in the above proof.  
\end{proof}
 
 The Jones basic construction for $\Z_k \subseteq \Z_{k+1}$ is based on the ideal 
 $\Z_{k} \e_{k} \Z_{k}$  of $\Z_{k+1}$ and the fact that $\Lambda_{k-1}(\G) \subseteq \Lambda_{k+1}(\G)$,   and it involves the following two key ideas.   
\begin{enumerate}
\item[(1)]  When decomposing $\VV^{\ot (k+1)}$,  let  
\begin{eqnarray}  \VV^{\ot (k+1)}_\text{old} &=& \bigoplus_{\lambda \in  \Lambda_{k-1}(\G)} 
m_{k+1}^\lambda \, \G^\lambda \\
 \VV^{\ot (k+1)}_\text{new} &=& \bigoplus_{\lambda \in   \Lambda_{k+1}(\G)\setminus \Lambda_{k-1}(\G)}
m_{k+1}^\lambda \, \G^\lambda. \end{eqnarray} 
Thus,  $\VV^{\ot (k+1)} = \VV^{\ot (k+1)}_\text{old} \oplus \VV^{\ot (k+1)}_\text{new}$. Using the fact that  $\frac{1}{2}\e_k$ corresponds to the projection onto the trivial $\G$-module in the last two tensor slots of $\VV^{\otimes (k+1)}$,  Wenzl  (\cite[Prop.~4.10]{W3},\cite[Prop.~2.2]{W4}) proves that  $\Z_k \e_k \Z_k \cong \End_\G(\VV^{\otimes (k+1)}_\text{old})$. 
Applying the decomposition $\End_\G(\VV^{\otimes (k+1)}) \cong \End_\G(\VV^{\otimes (k+1)}_\text{old}) \oplus \End_\G(\VV^{\otimes (k+1)}_\text{new})$ then gives
\begin{equation}\label{JBC-decomp}
\Z_{k+1} \cong  \Z_k \e_k \Z_k \oplus \End_\G(\VV^{\otimes (k+1)}_\text{new}).
\end{equation}

\item[(2)] There is an algebra isomorphism $\Z_{k-1} \cong \e_k \Z_k \e_k$ via the map  that sends $a \in \Z_{k-1}$ to $\e_k a \e_k =  2 a \e_k = 2\e_k a$.  Viewing $\Z_k \e_k$ as a module for $\Z_k \e_k \Z_k$  and for $\Z_{k-1} \cong \e_k \Z_k \e_k$ by multiplication on the left and right, respectively, we have that 
 these actions commute and centralize one another:
\[
\Z_k \e_k \Z_k \cong \End_{\Z_{k-1}}(\Z_k \e_k) \qquad  \text{and} \qquad  \Z_{k-1}   \cong \End_{ \Z_k \e_k  \Z_k}(\Z_k \e_k).
\]
Double-centralizer theory (e.g.,  \cite[Secs.\ 3B and 68]{CR})  then implies that the simple summands  of the semisimple algebras $\Z_k \e_k \Z_k$ and $\Z_{k-1}$ (hence their irreducible modules)  can be indexed by the same set  $\Lambda_{k-1}(\G)$.  

\end{enumerate}

As before, let  $\Z^\lambda_k, \lambda \in \Lambda_k(\G)$, denote the irreducible $\Z_k$-modules.  By restriction, $\Z^\lambda_k$ is a $\Z_{k-1}$-module and 
\[
\Res^{\Z_k}_{\Z_{k-1}} (\Z^\lambda_k) = \bigoplus_{\mu \in  \Lambda_{k-1}(\G)} \Theta_{\lambda,\mu}\, \Z^\mu_{k-1},
\]
where $\Theta_{\lambda,\mu}$ is the multiplicity of $\Z^\mu_{k-1}$ in $\Z^\lambda_{k}$.   The $| \Lambda_k(\G)| \times |\Lambda_{k-1}(\G)|$ matrix $\Theta$ whose $(\lambda,\mu)$-entry is $\Theta_{\lambda,\mu}$ is the \emph{inclusion matrix} for $\Z_{k-1} \subseteq \Z_{k}$.  For all of the groups $\G$ in this paper, the restriction is ``multiplicity free" meaning that each $\Theta_{\lambda,\mu}$ is either 0 or 1.   

General facts from double-centralizer theory imply that  the inclusion matrix for $\Z_{k-1} \subseteq \Z_k$ is the transpose of the inclusion matrix for $\End_{\Z_{k}}(\Z_k \e_k )\subseteq \End_{\Z_{k-1}}( \Z_k \e_k) $,  which implies the following: 

\begin{quote} \emph{In the Bratteli diagram
for the tower of algebras $\Z_k$, the edges between levels $k$ and $k+1$ corresponding to $ \Z_k \e_k \Z_k \subseteq \Z_{k+1}$ are the reflection over level $k$ of the edges between $k-1$ and $k$ corresponding to $\Z_{k-1} \subseteq \Z_k$.}
\end{quote}
In Section \ref{subsec:Bratteli}, we have highlighted the edges of the Bratteli diagrams that are \emph{not} reflections over level $k$ and left unhighlighted the edges corresponding to the Jones basic construction.  
 
 \begin{quote} 
\emph{The highlighted edges give a copy of the representation graph $\mathcal{R}_\VV(\G)$ (i.e. the Dynkin diagram)  embedded in the Bratteli diagram,  except when $\G$ is a cyclic group of odd order, in which case they form the double of $\mathcal{R}_\VV(\G)$.}\end{quote}   
This will be discussed further in Examples \ref{ex:idems}. 

\subsection{Projection mappings}
The Jones-Wenzl idempotents (see \cite{W1}, \cite[Sec.\ 3]{Jo}, \cite{FK}) in $\TL_k(2)$  are defined recursively by setting  $\f_1 = {\bf 1}$ and letting 
\begin{equation}\label{JonesWenzl}
\f_{n}  = \f_{n-1} - \frac{n-1}{n} \f_{n-1} \e_{n-1} \f_{n-1}, \qquad 1 < n \le k.
\end{equation}
These idempotents satisfy the following properties (see \cite{W1}, \cite{FK}, \cite{CJ} for proofs),
\begin{equation}\label{JWrelations}
\begin{array}{lll}
{\rm (JW1)}\qquad\qquad & \f_n^2 =  \f_n, & 1 \le n \le k-1, \\
{\rm (JW2)} & \e_i  \f_n = \f_n \e_i = 0, \quad & 1 \le i < n \le k, \qquad    \\
{\rm (JW3)} & \e_i \f_n = \f_n \e_i, & 1 \le n < i  \le k -1, \\ 
{\rm (JW4)} & \e_n \f_n \e_n = \frac{n+1}{n} \f_{n-1} \e_n, \qquad & 1 \le n \le k-1,  \\ 
{\rm (JW5)} & 1-\f_n \in \langle \e_1, \ldots, \e_{n-1} \rangle,  \qquad  &  1 \le n \le k-1,  \\
{\rm (JW6)} & \f_m\f_n = \f_n \f_m  & 1 \le m,n \le k,
\end{array} \hskip1truein
\end{equation}
where $\langle \e_1, \ldots, \e_{n-1} \rangle$ stands for  the subalgebra of $\TL_k(2)$ generated by  $\e_1, \ldots, \e_{n-1}$.   An expression for  $\f_n$ in the $\TL_k(2)$ basis of  words in the generators $\e_1, \ldots, \e_{k-1}$ can be
found in \cite{FK,Mo}.

The simple $\SU_2$-module $\VV(k)$ appears in $\VV^{\otimes k}$ with multiplicity 1,  and it is does not appear as a simple summand of $\VV^{\otimes \ell}$ for any $\ell < k$.
To locate  $\VV(k)$ inside  $\VV^{\otimes k}$,  let $\mathsf{r} = (r_1, \dots, r_k) \in \{-1,1\}^k$ for some $k \geq 1$,  and
set 
\begin{equation}\mathsf{ \vert r \vert}  = \left |\left\{\,r_i \mid r_i = -1\,\right \}\right |. \end{equation} 
Then the totally   
 symmetric tensors $\mathsf{S}(\VV^{\otimes k})$ form the $(k+1)$-dimensional  subspace of $\VV^{\otimes k}$ spanned by the vectors $\w_0, \w_1, \ldots, \w_k$, where
\begin{equation}\label{weightvector}
\w_t = \sum_{|\mathsf{r}| = t}  \v_{\mathsf{r}}, \qquad 0 \le t \le k.
\end{equation}
It is well known (see for example \cite[Sec.~11.1]{FH}) that   
$\mathsf{S}(\VV^{\otimes k})\cong \VV(k)$ as an $\SU_2$-module, and that $\f_k(\VV^{\otimes k}) = \mathsf{S}(\VV^{\otimes k})$  (\cite[Prop.~1.3, Cor.~1.4]{FK}).  In particular, 
\[\mathsf{S}(\VV^{\ot 2}) =
\spann_{\mathbb C}\{\w_0 =  \v_1 \ot \v_1,  \, \w_1 = \v_{-1} \ot \v_1 + \v_1 \ot \v_{-1}, \, 
\w_2=\v_{-1} \ot \v_{-1}\} \cong \VV(2).\]
Observe that $\f_2 ={\bf 1} -\half\e_1$ and $\f_2(\w_t) = \w_t$ for $t = 0,1,2$.   

\begin{subsection}{Projections related to branch nodes} \end{subsection}
A  \emph{branch node} in  the representation graph $\mathcal{R}_\VV(\G)$ is any vertex of degree greater than 2.  Let $\br(\G)$ denote the branch node in  $\mathcal{R}_\VV(\G)$, and in the case of $\DD_n (n > 2)$, which has 2 branch nodes, set $\br(\DD_n) = 1$. In the special case of $\mathcal{R}_\VV(\C_n)$ for $n \leq \infty$,  we consider the affine node itself to be the branch node,  so that $\br(\C_n) = 0$.   When  $\G = \D_n,\TT,\OO,\II, \C_\infty$,  or $\D_\infty$,  we say that the \emph{diameter} of $\mathcal{R}_\VV(\G)$, denoted by $\diam(\G)$, is the maximum distance  between any vertex $\lambda \in \Lambda(\G)$ and $0 \in \Lambda(\G)$.  In particular,  $\diam(\G) = \infty$ for $\G = \C_\infty$ or $\D_\infty$,
 as in Table \ref{eq:branch}.   For $\G = \C_n$, we let  $\diam(\G) = \tilde n$,  where $\tilde n$ is as in \eqref{eq:branch2}. 
\begin{equation}\label{eq:branch}
\begin{array}[t]{c|cccccccc}
\G & \SU_2 & \C_n & \D_n & \TT & \OO & \II & \C_\infty & \D_\infty \\ \hline 
\diam(\G) & \infty & \tilde{n} & n & 4 & 6 & 7 & \infty & \infty  \\  
\br(\G) & \emptyset & 0 & 1 & 2 & 3 & 5 & 0 & 1\\  
\end{array}
\end{equation} 
\begin{equation}\label{eq:branch2}
\tilde n = \begin{cases}
\half n, & \text{if $n$ is even},\\
n, & \text{if $n$ is odd}.
\end{cases}
\end{equation}

In this section, we develop a recursive procedure for constructing  the idempotents $\f_\nu$  that project onto the irreducible $\G$-summands $\G^\nu$ of
$\VV^{\ot k}_{\mathsf{new}}$.   Assume  $k \leq \ell$, where  $\ell = \br(\G)$.   
Then $\VV^{\ot k}_{\mathsf{new}} = \G^{(k)} = \VV(k)$.  In this case,  the projection of $\VV^{\ot k}$ onto $\G^{(k)}$
is given by  $\f_{(k)} := \f_k$, where $\f_k$ is the Jones-Wenzl idempotent.    The irreducible $\SU_2$-module $\VV(\ell+1)$ is reducible as a $\G$-module.   Suppose $\mathsf{deg}(\ell)$  is the degree of the branch node indexed by $\ell$
in the representation graph of $\G$, so that  $\mathsf{deg}(\ell) = 3$,  except when $\G = \D_2$  where $\mathsf{deg}(\ell) = 4$.   We assume the  decomposition into irreducible $\G$-modules is given by   $\VV^{\ot (\ell+1)}_{\mathsf{new}} = \VV(\ell+1) = \bigoplus_{j = 1}^{\mathsf{deg}(\ell)-1}  \G^{\beta_j}$.    For example,
when $\G = \OO$,  then $\ell=3$  and  $\VV^{\ot 4}_{\mathsf{new}} =  \VV(4)  = \G^{(4^+)} \oplus \G^{(4^-)}$;  and when $\G = \D_2$,  then  $\ell = 1$  and $\VV^{\ot 2}_{\mathsf{new}} =  \VV(2) = \G^{(0')} \oplus \G^{(2)} \oplus \G^{(2')}$, where the labels of the irreducible $\G$-modules are as in Section \ref{subsec:Ddiag}.    

 Let 
\begin{equation}\label{eq:branchdec} 
\f_{\ell+1}=  \sum_{j=1}^{\mathsf{deg}(\ell) - 1}  \f_{\beta_j} 
\end{equation}
be the decomposition of the Jones-Wenzl idempotent $\f_{\ell+1}$  into minimal orthogonal idempotents that commute with $\G$ and project $\VV^{\ot (\ell+1)}_{\mathsf{new}}$ onto the irreducible $\G$-summands $\G^{\beta_j}$.  For finite subgroups $\G$, these
idempotents can be constructed using the corresponding irreducible characters $\chi_{\beta_j}$ as
 \begin{equation}\label{projectors}
\f_{\beta_j} = \frac{ \dimm \G^{\beta_j}}{|\G|}  \sum_{g \in \G}  \overbar{\chi_{\beta_j}(g)}\,  g^{\ot (\ell+1)},
\end{equation}
where $g^{\ot (\ell+1)}$ is the matrix of $g$ on $\VV^{\otimes (\ell+1)}$ and ``$\overbar{\ \ }$'' denotes complex conjugate. 
(See for example, \cite[(2.32)]{FH}.)   We will not need these explicit expressions in this paper. \msk  

\begin{lemma}\label{lem:fmu}Let $\Z_k = \Z_k(\G)$ for all $k$.   Let  $\lambda \in \Lambda_k(\G)$, and assume $\f_\lambda \in \Z_k$ projects $\VV^{\ot k}$ onto the
irreducible  $\G$-module $\G^\lambda$ in $\VV^{\ot k}_{\mathsf{new}}$.   Let  $ d^{\lambda} = 
\dimm \G^\lambda$,  and suppose $\G^\lambda \ot \VV = \bigoplus_i  \G^{\mu_i}$.   Let $\f_{\mu_i}$
be the orthogonal idempotents in $\Z_{k+1}$ that project $\VV^{\ot (k+1)}$ onto the irreducible $\G$-modules
$\G^{\mu_i}$, and assume $d^{\mu_i} = \dimm \G^{\mu_i}$.    Then the following hold for all $i$  such
that $\G^{\mu_i}$ is a summand of $\VV^{\ot (k+1)}_{\mathsf{new}}$; {\it i.e., for all $\mu_i \in \Lambda_{k+1}(\G) \setminus \Lambda_{k-1}(\G)$:}
\begin{itemize}
\item[{\rm (i)}]  $ \f_\lambda \f_{\mu_i} = \f_{\mu_i} =\f_{\mu_i} \f_\lambda$;
\item[{\rm (ii)}]  $\f_{\mu_i}$ commutes with $\e_j$ for $j > k+1$;
\item[{\rm (iii)}]  $\varepsilon_{k+1}(\f_{\mu_i}) = \frac{d^{\mu_i}}{2d^\lambda} \f_\lambda$\,;   and 
\item[{\rm (iv)}]  $\e_{k+1}\f_{\mu_i}\e_{k+1} = 2 \varepsilon_{k+1}(\f_{\mu_i}) \e_{k+1} = \frac{d^{\mu_i}}{d^\lambda} \f_\lambda \e_{k+1}$.
\end{itemize}  
\end{lemma}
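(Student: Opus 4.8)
The plan is to prove the four parts in order, extracting as much leverage as possible from the projection/character description of the $\f_\nu$ and from Proposition~\ref{prop:condexp}. For (i), I would argue purely representation-theoretically: $\f_\lambda$ is the projection of $\VV^{\ot k}$ onto $\G^\lambda$, so $\f_\lambda \ot {\bf 1}$ is the projection of $\VV^{\ot(k+1)}$ onto $\G^\lambda \ot \VV = \bigoplus_i \G^{\mu_i}$. Since $\f_{\mu_i}$ projects onto the summand $\G^{\mu_i}$, which is contained in the image of $\f_\lambda \ot {\bf 1}$, and all these projections are orthogonal idempotents commuting with $\G$, we get $(\f_\lambda \ot {\bf 1})\f_{\mu_i} = \f_{\mu_i} = \f_{\mu_i}(\f_\lambda \ot {\bf 1})$; identifying $\f_\lambda$ with $\f_\lambda \ot {\bf 1}$ under the embedding $\Z_k \hookrightarrow \Z_{k+1}$ gives (i). Part (ii) is similar and slightly subtle only in bookkeeping: $\f_{\mu_i}$ lives in $\End_\G(\VV^{\ot(k+1)})$ hence acts on the first $k+1$ tensor slots, while $\e_j$ for $j > k+1$ acts on slots $j, j+1$ disjoint from $\{1,\dots,k+1\}$, so the two operators act on independent tensor factors and commute; I would phrase this via the embedding $\Z_{k+1} \hookrightarrow \Z_m$ for $m > k+1$ and the fact that $\e_j = {\bf 1}^{\ot(j-1)} \ot \e \ot {\bf 1}^{\ot(m-j-1)}$.

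The substantive computations are (iii) and (iv), and I expect (iii) to be the main obstacle. The idea is to compute $\varepsilon_{k+1}(\f_{\mu_i})$ and recognize it as a scalar multiple of $\f_\lambda$. By part (i), $\f_{\mu_i} = \f_\lambda \f_{\mu_i} \f_\lambda$ (using $\f_\lambda^2 = \f_\lambda$), and by Proposition~\ref{prop:condexp}(c) the conditional expectation is a $(\Z_k,\Z_k)$-bimodule map, so $\varepsilon_{k+1}(\f_{\mu_i}) = \f_\lambda\,\varepsilon_{k+1}(\f_{\mu_i})\,\f_\lambda$, which already shows $\varepsilon_{k+1}(\f_{\mu_i})$ lies in $\f_\lambda \Z_k \f_\lambda$. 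Now $\f_\lambda \Z_k \f_\lambda$ is the endomorphism algebra of the $\lambda$-isotypic component of $\VV^{\ot k}$; since $\G^\lambda$ occurs in $\VV^{\ot k}_{\mathsf{new}}$ with multiplicity one (it first appears at level $k$), this algebra is one-dimensional, spanned by $\f_\lambda$. Hence $\varepsilon_{k+1}(\f_{\mu_i}) = c\,\f_\lambda$ for a scalar $c$, and it remains to pin down $c$. To do this I would take traces: apply $\Tr_{k}$ (the trace on $\End(\VV^{\ot k})$) to both sides, and use Proposition~\ref{prop:condexp}(d) with $b = {\bf 1}$, namely $\Tr_{k}(\varepsilon_{k+1}(\f_{\mu_i})) = \tfrac12\Tr_{k+1}(\f_{\mu_i})$ — more precisely, unwinding the definition of $\varepsilon_{k+1}$ and of the trace shows $\Tr_{k+1}(\f_{\mu_i}) = \Tr_{k}\big(2\varepsilon_{k+1}(\f_{\mu_i})\big)$. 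Since $\f_{\mu_i}$ is the projection onto $\G^{\mu_i}$ inside $\VV^{\ot(k+1)}$, its ordinary matrix trace equals $\dimm(\G^{\mu_i}\text{-isotypic component}) = m_{k+1}^{\mu_i}\, d^{\mu_i}$, and because $\G^{\mu_i}$ is new, $m_{k+1}^{\mu_i}$ equals $m_k^\lambda$ (each walk to $\mu_i$ extends a unique walk to $\lambda$, as $\mu_i$ is adjacent only to $\lambda$ among level-$k$ vertices on that edge — or, more robustly, $\f_\lambda \ot {\bf 1}$ dominates $\f_{\mu_i}$ and one counts via (i)). Likewise $\Tr_k(\f_\lambda) = m_k^\lambda\, d^\lambda$. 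Equating gives $c\, m_k^\lambda\, d^\lambda = \tfrac12 m_{k+1}^{\mu_i}\, d^{\mu_i} = \tfrac12 m_k^\lambda\, d^{\mu_i}$, hence $c = d^{\mu_i}/(2 d^\lambda)$, which is (iii).

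Finally, (iv) is immediate: by Proposition~\ref{prop:condexp}(a), $\e_{k+1}\f_{\mu_i}\e_{k+1} = 2\varepsilon_{k+1}(\f_{\mu_i})\e_{k+1}$ provided $\f_{\mu_i}$ lies in $\End(\VV^{\ot(k+1)})$ viewed inside $\End(\VV^{\ot(k+2)})$ — which it does via the standard embedding — and then substituting the value of $\varepsilon_{k+1}(\f_{\mu_i})$ from (iii) yields $\tfrac{d^{\mu_i}}{d^\lambda}\f_\lambda\e_{k+1}$. The one point requiring care throughout is the multiplicity-one fact that makes $\f_\lambda \Z_k \f_\lambda$ one-dimensional; this is exactly the statement that $\G^\lambda$, being a summand of $\VV^{\ot k}_{\mathsf{new}}$, appears for the first time at level $k$ and with multiplicity one, which in the setting of these McKay graphs (where $a_{\lambda,\mu}\in\{0,1\}$ and the graph is a tree except in the cyclic case) follows from the walk description of $m_k^\lambda$. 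If one prefers to avoid the multiplicity-one assumption, an alternative is to compute $\varepsilon_{k+1}(\f_{\mu_i})$ directly from the character formula \eqref{projectors}, using that $\varepsilon_{k+1}(g^{\ot(\ell+1)})$ relates to $g^{\ot \ell}$ times $\tfrac12\tr(g)$ — but I expect the trace argument above to be cleaner.
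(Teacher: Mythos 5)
Your proof is correct and takes essentially the same route as the paper: (i) by comparing images under the projections, (iii) via the bimodule property of $\varepsilon_{k+1}$ reducing to one-dimensionality of $\f_\lambda\Z_k\f_\lambda$ (the same Schur-lemma fact the paper derives by splitting $\VV^{\ot k}$ into the eigenspaces of $\f_\lambda$), the scalar pinned down by the trace identity of Proposition~\ref{prop:condexp}(d), and (iv) from (iii) and Proposition~\ref{prop:condexp}(a). One small imprecision worth noting: since $\f_{\mu_i}$ projects onto a \emph{single} copy of $\G^{\mu_i}$ (not the whole isotypic component), $\Tr_{k+1}(\f_{\mu_i})=d^{\mu_i}$ directly; your expression $m^{\mu_i}_{k+1}\,d^{\mu_i}$ and the digression about counting walks on $\mathcal{R}_\VV(\G)$ are unnecessary and only give the right answer because $m^{\mu_i}_{k+1}=1$ in the setting where the lemma is applied.
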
 
 \begin{proof} (i)  Now $\f_{\mu_i} \f_\lambda \left (\VV^{\ot (k+1)}\right)  =\f_{\mu_i} (\G^\lambda \ot \VV) = 
\G^{\mu_i} = \f_{\mu_i}(\VV^{\ot (k+1)})$, so $\f_{\mu_i} \f_\lambda = \f_{\mu_i}$.    For the product in the other order, 
we have  $\f_\lambda  \f_{\mu_i}\left (\VV^{\ot (k+1)}\right)  = \f_\lambda (\G^{\mu_i} )$.   Since
$\G^{\mu_i}$ is contained in $\G^\lambda \ot \VV$ and $\f_\lambda$ acts as the identity on that space, 
$\f_\lambda(\G^{\mu_i} ) = \G^{\mu_i} =  \f_{\mu_i}(\VV^{\ot (k+1)})$, which implies the result. 

(ii) This is clear, because  $\f_{\mu_i} \in \Z_{k+1}$,  and $\Z_{k+1}$ commutes with $\e_j$ for $j > k+1$.

(iii)  From (i) and part (c) of Proposition 1.26,  we have for each $i$, 
\begin{equation}\label{eq:fscommute} \f_\lambda \varepsilon_{k+1}(\f_{\mu_i})  = 
\varepsilon_{k+1}(\f_\lambda \f_{\mu_i}) = \varepsilon_{k+1}(\f_{\mu_i}) 
 = \varepsilon_{k+1}(\f_{\mu_i}\f_\lambda)  = \varepsilon_{k+1}(\f_{\mu_i}) \f_\lambda.\end{equation} 
 
 Suppose $\W = \VV^{\ot k}$,  and let $\W = \W_0 \oplus \W_1$ be the eigenspace decomposition of $\f_\lambda$
 on $\W$ so that $\f_\lambda w = j w$ for $w \in \W_j$, $j=0,1$.  Since $\f_\lambda$ and $\varepsilon_{k+1}(\f_{\mu_i})$ commute by \eqref{eq:fscommute},
$\varepsilon_{k+1}(\f_{\mu_i})$  maps $\W_j$ into itself for $j=0,1$.   Now if $w \in \W_0$, then
$\varepsilon_{k+1}(\f_{\mu_i})w =  \varepsilon_{k+1}(\f_{\mu_i}) \f_\lambda w = 0$,  so that $\varepsilon_{k+1}(\f_{\mu_i})$
is $0$ on $\W_0$.       Since $\varepsilon_{k+1}(\f_{\mu_i}) \in \Z_k$,  we have
$\varepsilon_{k+1}(\f_{\mu_i})\in \End_\G(\W_1) = \CC \id_{\W_1} = \CC \f_\lambda$ by Schur's lemma, as $\W_1 =
\G^{\lambda}$, an irreducible $\G$-module.  Therefore, there exists $\xi_i \in \CC$ such that
$\varepsilon_{k+1}(\f_{\mu_i}) = \xi_i \f_\lambda$ on $\W_1$.   But since these transformations agree on $\W_0$
(as both equal 0 on $\W_0$),   we have  $\varepsilon_{k+1}(\f_{\mu_i}) = \xi_i \f_\lambda$ on $\VV^{\ot k}$. 
Taking traces and using (d) of Proposition 1.26 gives
\[ d^{\mu_i} = \tr_{k+1}(\f_{\mu_i}) =  \tr_{k+1}\left(\varepsilon_{k+1}(\f_{\mu_i})\right) = \xi_i \tr_{k+1}(\f_\lambda) = 
2 \xi_i \tr_k(\f_\lambda) = 2\xi_i  d^\lambda.\]
Therefore $\xi_i =  \frac{d^{\mu_i}}{2 d^\lambda}$ so that 
$\varepsilon_{k+1}(\f_{\mu_i}) =   \frac{d^{\mu_i}}{2 d^\lambda}\f_\lambda$, as asserted in (iii). 
Part (iv) follows immediately from (iii) and Proposition 1.26\,(a).   \end{proof} 

\begin{prop}\label{prop:fnu}  Assume the notation of  Lemma \ref{lem:fmu},  and let  $\mu = \mu_i  \in \Lambda_{k+1}(\G)\setminus \Lambda_{k-1}(\G)$ for some $i$.   Suppose 
$\G^{\mu} \ot \VV = \G^\lambda \oplus \G^\nu$,   where $\G^\nu$ is an irreducible $\G$-module of
dimension $d^\nu$ and $\nu \in \Lambda_{k+2}(\G)\setminus \Lambda_k(\G)$.  
Set
\[\f_\nu =  \f_{\mu} -  \frac{d^\lambda}{d^\mu} \f_\mu \e_{k+1} \f_\mu.\]
Then the following hold:
\begin{itemize}
\item[{\rm (i)}]  $\f_\nu$ is an idempotent in $\Z_{k+2}$;
\item[{\rm (ii)}]  $\f_\mu \f_\nu = \f_\nu = \f_\nu \f_\mu$; 
\item[{\rm(iii)}]   $\f_\nu$ commutes with $\e_j$ for $j > k+2$;
\item[{\rm (iv)}]  $\e_{k+2} \f_\nu \e_{k+2}  = 
 \frac{d^\nu}{d^\mu} \f_\mu \e_{k+2}$, so that  $\varepsilon_{k+2}(\f_\nu) =   \frac{d^\nu}{2d^\mu}\f_\mu$.
\item[{\rm (v)}]  $\f_\nu$ projects $\VV^{\ot(k+2)}$ onto $\G^\nu$.   \end{itemize}
\end{prop}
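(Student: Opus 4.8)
The plan is to mimic the classical Jones--Wenzl argument, using Lemma~\ref{lem:fmu} as the analogue of the inductive input and Proposition~\ref{prop:condexp} as the tool for computing conditional expectations. Throughout, recall that $\f_\mu$ is an idempotent in $\Z_{k+1}\subseteq\Z_{k+2}$ projecting $\VV^{\ot(k+1)}$ onto $\G^\mu\subseteq\VV^{\ot(k+1)}_{\mathsf{new}}$, that $\frac12\e_{k+1}$ is the idempotent projecting the last two tensor slots onto the trivial module, and that $\G^\mu\ot\VV=\G^\lambda\oplus\G^\nu$, where $\lambda\in\Lambda_k(\G)$ is the module that $\mu$ came from.

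First I would establish (iv), since the key identity is $\e_{k+1}\f_\mu\e_{k+1}=\frac{d^\mu}{d^\lambda}\f_\lambda\e_{k+1}$, which is exactly Lemma~\ref{lem:fmu}(iv) with the roles $(\lambda,\mu_i)$ there played by $(\lambda,\mu)$ here; this gives $\varepsilon_{k+1}(\f_\mu)=\frac{d^\mu}{2d^\lambda}\f_\lambda$. To get the statement of (iv) as written (the conditional expectation $\varepsilon_{k+2}$), note $\f_\nu\in\Z_{k+2}$ and $\G^\mu$ appears in $\VV^{\ot(k+2)}_{\mathsf{new}}$ relative to the inclusion $\Z_{k+1}\subseteq\Z_{k+2}$ only through the branch $\mu\to\nu$; applying Lemma~\ref{lem:fmu}(iii)--(iv) one more level up (now with $(\mu,\nu)$ in place of $(\lambda,\mu_i)$, legitimate once (v) tells us $\f_\nu$ is the projector onto $\G^\nu$) yields $\varepsilon_{k+2}(\f_\nu)=\frac{d^\nu}{2d^\mu}\f_\mu$ and $\e_{k+2}\f_\nu\e_{k+2}=\frac{d^\nu}{d^\mu}\f_\mu\e_{k+2}$. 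However, since (iv) and (v) are mutually dependent, I would instead derive (v) first by a dimension/eigenspace argument, then read off (iv). Concretely for (v): $\f_\nu=\f_\mu-\frac{d^\lambda}{d^\mu}\f_\mu\e_{k+1}\f_\mu$ is a $\G$-equivariant operator supported (by (ii), proved next) inside the image of $\f_\mu$; restricted to $\f_\mu(\VV^{\ot(k+2)})=\G^\mu\ot\VV=\G^\lambda\oplus\G^\nu$, the operator $\frac{d^\lambda}{d^\mu}\f_\mu\e_{k+1}\f_\mu$ is $\G$-equivariant, hence acts as a scalar on each of the two irreducible summands; using $\e_{k+1}^2=2\e_{k+1}$ and the computation $\varepsilon_{k+1}(\f_\mu)=\frac{d^\mu}{2d^\lambda}\f_\lambda$, one checks this scalar is $1$ on the $\G^\lambda$-summand (killing it) and $0$ on the $\G^\nu$-summand, so $\f_\nu$ is exactly the projection onto $\G^\nu$ — which also forces $\f_\nu^2=\f_\nu$, giving (i).

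For (ii), compute $\f_\mu\f_\nu=\f_\mu^2-\frac{d^\lambda}{d^\mu}\f_\mu^2\e_{k+1}\f_\mu=\f_\mu-\frac{d^\lambda}{d^\mu}\f_\mu\e_{k+1}\f_\mu=\f_\nu$ using $\f_\mu^2=\f_\mu$, and symmetrically on the other side. For (iii), $\f_\mu$ commutes with $\e_j$ for $j>k+1$ by Lemma~\ref{lem:fmu}(ii) applied to $\mu$, and $\e_{k+1}$ commutes with $\e_j$ for $j>k+2$ by (TL3); hence every term in $\f_\nu$ commutes with such $\e_j$. The only genuinely delicate point is the scalar computation in (v): I expect the main obstacle to be pinning down that $\frac{d^\lambda}{d^\mu}\f_\mu\e_{k+1}\f_\mu$ acts as the \emph{identity} on the $\G^\lambda$-summand of $\G^\mu\ot\VV$ — this requires knowing that $\f_\mu\e_{k+1}\f_\mu$ is (up to the right scalar) the projection of $\G^\mu\ot\VV$ onto the copy of $\G^\lambda$ sitting there, which I would extract from the basic-construction isomorphism $\e_{k+1}\Z_{k+2}\e_{k+1}\cong\Z_k$ together with Lemma~\ref{lem:fmu}(iv): indeed $\f_\mu\e_{k+1}\f_\mu=\frac{d^\mu}{d^\lambda}\,x\,\e_{k+1}$ for the unique $x\in\Z_{k+1}$ with $\f_\mu\e_{k+1}\f_\mu=(x\ot\mathbf1)\e_{k+1}$, and one identifies $x$ with $\f_\lambda$ by uniqueness (Proposition~\ref{prop:ZkZk1}(a)) after checking the matrix entries via \eqref{eq:brs}, or more cleanly by applying $\varepsilon_{k+1}$ and invoking Lemma~\ref{lem:fmu}(iii). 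Once that scalar is nailed down, parts (i)--(iv) are immediate formal consequences as above.
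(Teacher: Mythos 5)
Your overall plan — prove (ii) and (iii) directly, establish (v) first via a Schur's-lemma/scalar argument on $\f_\mu(\VV^{\ot(k+2)}) = \G^\lambda\oplus\G^\nu$, and then read off (i) and (iv) as formal consequences by applying Lemma~\ref{lem:fmu} one level up with $(\mu,\nu)$ in place of $(\lambda,\mu_i)$ — is structurally sound and genuinely different from the paper's order. The paper instead proves (i) through (iv) by direct algebraic computation (each using the identity $\e_{k+1}\f_\mu\e_{k+1}=\frac{d^\mu}{d^\lambda}\f_\lambda\e_{k+1}$ of Lemma~\ref{lem:fmu}(iv)), and only then establishes (v) by showing that $\p:=\frac{d^\lambda}{d^\mu}\f_\mu\e_{k+1}\f_\mu$ is a nonzero idempotent lying in the ideal $\Z_{k+1}\e_{k+1}\Z_{k+1}=\End_\G(\VV^{\ot(k+2)}_{\mathsf{old}})$, so that its image inside $\G^\mu\ot\VV=\G^\lambda\oplus\G^\nu$ must be the old summand $\G^\lambda$ and consequently $\f_\nu=\f_\mu-\p$ projects onto $\G^\nu$.

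There is, however, a concrete error in the delicate step you flag yourself. You propose to pin down the scalar on $\G^\lambda$ by claiming that $\f_\mu\e_{k+1}\f_\mu$ can be rewritten as $(x\ot\mathbf1)\e_{k+1}$ for a unique $x\in\Z_{k+1}$ and then identifying $x$ via Proposition~\ref{prop:ZkZk1}(a). But Proposition~\ref{prop:ZkZk1}(a) applies only to elements of the form $a\e_{k+1}$ with $a\in\Z_{k+2}$, not to three-factor products $a\e_{k+1}b$, and indeed $\f_\mu\e_{k+1}\f_\mu$ is \emph{not} of the form $x\e_{k+1}$ in general. Already with $\G=\SU_2$, $k=1$, $\mu=(2)$, $\f_\mu=\f_2=\mathbf1-\tfrac12\e_1$, one has
\[
\f_\mu\e_2\f_\mu \;=\; \e_2 - \tfrac12\e_1\e_2 - \tfrac12\e_2\e_1 + \tfrac14\e_1,
\]
which cannot equal $x\e_2$ for any $x\in\Z_2=\spann\{\mathbf1,\e_1\}$, since the right-hand side would have zero coefficient on $\e_2\e_1$ (and on $\e_1$) in the standard $\TL_3$ basis. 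So the uniqueness argument you invoke does not apply, and the scalar on $\G^\lambda$ is not established by this route. Your alternative suggestion (apply the conditional expectation) is also not quite right as stated, since $\varepsilon_{k+1}:\Z_{k+1}\to\Z_k$, while $\f_\mu\e_{k+1}\f_\mu$ lives in $\Z_{k+2}$. What fills the gap is precisely what the paper does: compute $\p^2=\tfrac{(d^\lambda)^2}{(d^\mu)^2}\f_\mu\e_{k+1}\f_\mu\e_{k+1}\f_\mu$ and simplify the inner $\e_{k+1}\f_\mu\e_{k+1}$ by Lemma~\ref{lem:fmu}(iv) to conclude $\p^2=\p$; then argue $\p\neq0$ (otherwise $(\e_{k+1}\f_\mu\e_{k+1})^2=0$, contradicting Lemma~\ref{lem:fmu}(iv)), and observe that $\p\in\Z_{k+1}\e_{k+1}\Z_{k+1}$ forces $\p(\VV^{\ot(k+2)})\subseteq\VV^{\ot(k+2)}_{\mathsf{old}}$, hence $\p(\cdots)=\G^\lambda$ and $\f_\nu(\cdots)=\G^\nu$. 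If you replace your factorization claim with that argument, the rest of your plan goes through, and you then correctly recover (i) and (iv) from (v) together with Lemma~\ref{lem:fmu}.
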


\begin{proof} (i) We have 
\begin{eqnarray*}  \f_\nu^2 &=& \left( \f_{\mu} -  {\frac{d^\lambda}{d^\mu}} \f_\mu \e_{k+1} \f_\mu\right)^2 
= \f_\mu - { 2 \frac{d^\lambda}{d^\mu}} \f_\mu \e_{k+1} \f_\mu +   {\frac{(d^\lambda)^2}{(d^\mu)^2}} \f_\mu \e_{k+1} \f_\mu   \e_{k+1} \f_\mu\\
&=& \f_\mu - { 2 \frac{d^\lambda}{d^\mu}} \f_\mu \e_{k+1} \f_\mu + 
 { \frac{(d^\lambda)^2}{(d^\mu)^2}\frac{d^\mu}{d^\lambda}} 
 \f_\mu \f_\lambda  \e_{k+1} \f_\mu  \qquad  \hbox{\rm by (iv) of Lemma \ref{lem:fmu}} \\
&=& \f_\mu - { 2 \frac{d^\lambda}{d^\mu}} \f_\mu \e_{k+1} \f_\mu +   { \frac{d^\lambda}{d^\mu}}
 \f_\mu \e_{k+1}\f_\mu  \qquad \qquad \quad \   \hbox{\rm by (i) of Lemma \ref{lem:fmu}}   \\
&=&\f_{\mu} -  {\frac{d^\lambda}{d^\mu}} \f_\mu \e_{k+1} \f_\mu = \f_\nu.  \end{eqnarray*}    
By construction, $\f_\nu \in\Z_{k+2}$.  

Part (ii) follows easily from the definition of $\f_\nu$ and the fact that $\f_\mu$ is an idempotent.  

(iii) Since $\f_\mu \in \Z_{k+1}$ commutes with $\e_j$ for $j > k+1$,  and $\e_{k+1}$ commutes with $\e_j$ for $j > k+2$, 
$\f_\nu$ commutes with $\e_j$ for $j > k+2$.

For (iv) we compute 
 \begin{eqnarray*} \e_{k+2} \f_\nu \e_{k+2}   &=& \e_{k+2} \left(\f_\mu-  { \frac{d^{\lambda}}{d^{\mu}}}\f_\mu \e_{k+1} \f_\mu\right )\e_{k+2} \\
 &=&\e_{k+2}^2 \f_\mu -   { \frac{d^{\lambda}}{d^{\mu}}}\f_\mu \e_{k+2}\e_{k+1}\e_{k+2}\f_\mu  \qquad  \qquad  \hbox{\rm using (ii) of Lemma \ref{lem:fmu}}  \\
 &=& 2\e_{k+2}\f_\mu  -  { \frac{d^{\lambda}}{d^{\mu}}}\f_\mu \e_{k+2} \f_\mu  \\
 &=&\left(2 -  { \frac{d^{\lambda}}{d^{\mu}}}\right) \f_\mu \e_{k+2} =   { \frac{2d^\mu - d^{\lambda}}{d^\mu}}\f_\mu \e_{k+2}
=  { \frac{d^{\nu}}{d^\mu}}\f_\mu \e_{k+2}.  \end{eqnarray*}  
 
 This  equation along with Proposition 1.26\,(a)  implies that $\varepsilon_{k+2}(\f_\mu)
 =  { \frac{d^{\nu}}{2d^\mu}}\f_\mu  \in \Z_{k+1}$.  

(v) From \eqref{JBC-decomp} with $k+2$ instead of $k+1$,  we have
$\Z_{k+2} = \Z_{k+1}\e_{k+1}\Z_{k+1} \oplus \End_\G(\VV_{\mathsf{new}}^{\ot (k+2)})$. 
Now
\begin{equation}\label{eq:decomp} 
\f_\mu \ot \mathbf{1} = \f_\mu  =  { \frac{d^\lambda}{d^\mu}} \f_\mu \e_{k+1} \f_u   + \f_\nu  \in  \Z_{k+2},
\end{equation}  
and the idempotent
$\f_\mu \ot {\bf 1}$ projects  $\VV^{\ot (k+2)}$ onto $\G^\mu \ot \VV = \G^\lambda \oplus \G^\nu$.   Observe that 
$\p: =   { \frac{d^\lambda}{d^\mu}} \f_\mu  \e_{k+1} \f_\mu \in \Z_{k+1}\e_{k+1}\Z_{k+1}$, 
and 
\[\p^2 = \left( { \frac{d^\lambda}{d^\mu}} \f_\mu  \e_{k+1} \f_\mu \right)^2 =  { \frac{(d^\lambda)^2}{(d^\mu)^2}}
\f_\mu \e_{k+1} \f_\mu \e_{k+1} \f_\mu =   { \frac{(d^\lambda)^2}{(d^\mu)^2}   \frac{d^\mu}{d^\lambda}}\f_\mu \f_\lambda \e_{k+1}\f_\mu 
= { \frac{d^\lambda}{d^\mu}}  \f_\mu  \e_{k+1} \f_\mu = \p\]
using  Lemma \ref{lem:fmu} (iv), so we can conclude that $\p$  is an idempotent once we know it
is nonzero.  But if $\p = 0$,   then   
\[0 = 2\e_{k+1} \f_\mu \e_{k+1} \f_\mu \e_{k+1} = (\e_{k+1}\f_\mu \e_{k+1})^2 \\
=
\left({\frac{d^\mu}{d^\lambda}} \f_\lambda e_{k+1}\right)^2 = 2{\frac{(d^\mu)^2}{(d^\lambda)^2}} \f_\lambda  \e_{k+1}\]
by Lemma \ref{lem:fmu}\,(iv).  Since $\f_\lambda$ and $\e_{k+1}$ act on different tensor slots and both are nonzero, we have
reached a contradiction.   Thus,  $\p$ is an idempotent.      Moreover,
$\f_\nu\p  = (\f_\mu - \p)\p =  0$.    Therefore,   \eqref{eq:decomp}  gives the decomposition of
$\f_\mu \ot {\bf 1}$  into orthogonal idempotents, with the first  idempotent  
$\p =  { \frac{d^\lambda}{d^\mu}} \f_\mu  \e_{k+1} \f_\mu$ in $\Z_{k+1}\e_{k+1}\Z_{k+1} = \End_{\G}(\VV_{\mathsf{old}}^{\ot (k+2)}).$
Then $\G^\mu \ot \VV = \p \left( \VV^{\ot (k+2)} \right) \oplus \f_\nu \left( \VV^{\ot (k+2)}\right)$ is a decomposition of
$\G^\mu \ot \VV = \G^\lambda \oplus \G^\nu$ into $\G$-submodules such that   
$\p \left( \VV^{\ot (k+2)}\right) \in \VV_{\mathsf{old}}^{\ot (k+2)}$.    Hence,  $\p \left( \VV^{\ot (k+2)}\right) = \G^\lambda$
and $\f_\nu \left( \VV^{\ot (k+2)}\right) = \G^\nu$.  
\end{proof}  

This procedure can be applied recursively to produce the projection idempotents  for all $k \leq \diam(\G)$ that do not come from  \eqref{projectors}, as illustrated  in the next series of examples.
 Our labeling of the irreducible $\G$-modules is as in Section \ref{subsec:Ddiag}.

\begin{examples}
\label{ProjectionExample} \begin{itemize}
\smallskip
\item[$\bullet$] \emph{The $\G = \TT,\OO, \II$ cases:}  { Let $\ell = \br(\G)$ (so $\ell  = 2,3,5$, respectively)  and set  $\f_{(k)} = \f_k$ for $0 \leq k \leq \ell$, where $\f_k$} is given by (\ref{JonesWenzl}). 
Let  $\f_{((\ell+1)^+)}$ and $\f_{((\ell+1)^-)}$ be 
the projections onto $\G^{((\ell+1)^+)}$ and $\G^{((\ell+1)^-)}$, respectively, which can be constructed using \eqref{projectors}.  When $\G = \TT$, applying Proposition
\ref{prop:fnu} with $\lambda = (2)$, $\mu = (3^\pm)$, and $\nu = (4^\pm)$ produces  the idempotents
$\f_{(4^\pm)} = \f_{(3^{\pm})} - \frac{3}{2} \f_{(3^{\pm})} \e_3  \f_{(3^{\pm})}$  that project $\VV^{\ot 4}$ onto
$\G^{(4^\pm)}$.    
When  $\G = \OO$,  first taking  $\lambda = (3)$, $\mu = (4^+)$, $\nu = (5)$  and then taking
$\lambda =  (4^+)$, $\mu = (5)$, $\nu = (6)$ in Proposition \ref{prop:fnu} will construct  the two remaining idempotents $\f_{(5)}$ and $\f_{(6)}$. 
Similarly, for $\G = \II$,  applying the procedure to   $\lambda = (5)$, $\mu = (6^+)$, and $\nu = (7)$, will produce
the last idempotent $\f_{(7)}$. 
\smallskip
\item[$\bullet$] \emph{The $\G = \C_n$, $n \leq \infty$  case:}  
Let  $\f_{(1)}$ and $\f_{(-1)}$  project $\VV$ 
onto the one-dimensional modules $\G^{(1)}$ and $\G^{(-1)}$, respectively.     Applying Proposition
\ref{prop:fnu} with $\lambda = (0)$ (i.e. with $\f_{(0)} = {\bf 1}$), $\mu = (\pm 1)$, and $\nu = (\pm 2)$ begins the recursive process
and constructs $\f_{(\pm 2)}$. 
 (We are adopting the conventions that  $(+j)$ stands for $(j)$ 
and $\G^{(j)} \cong \G^{(i)}$ whenever $n < \infty$ and $j \equiv i \modd n$.) Then assuming we have constructed $\f_{(\pm j)}$ for all $1\leq j \leq k$,  we obtain from the proposition that  
$\f_{(\pm(k+1))} = \f_{(\pm k)} - \f_{(\pm k)} \e_k \f_{(\pm k)}$  projects  $\VV_{\mathsf{new}}^{\ot (k+1)}$ onto
$\G^{(\pm(k+1))}$.    In the case that $\G = \C_\infty$, iterations of this process produce the idempotent projections onto
$\VV_{\mathsf{new}}^{\ot (\pm (k+1))}$ for all $k \geq 1$.  \smallskip

When $n < \infty$, the diameter is $\tilde n$, and   we proceed as above  to construct the idempotents $\f_{(\pm k)}$ for $k \leq \tilde n$.   Now 
$\VV_{\mathsf{new}}^{\ot \tilde n}  = \G^{(\tilde n)} \oplus \G^{(\tilde n)}$ when $n$ is even,  as $-\tilde n \equiv \tilde n \modd n$; 
and $\VV_{\mathsf{new}}^{\ot \tilde n}  = \G^{(0)} \oplus \G^{(0)}$ when $n$ is odd,  as $ \tilde n = n$.  The idempotent $\f_{(\pm \tilde n)}$ projects onto the  space $\CC \v_{\pm \underline{\mathsf 1}}$,   where $\underline{\mathsf 1}$ is the $\tilde n$ tuple of all 1s, and 
$
\v_{\pm \underline{\mathsf 1}}
$
has $\tilde n$ tensor factors  equal to $\v_{\pm {\mathsf 1}}$.  
 In the centralizer algebra $\Z_{\tilde n}$  there is 
a corresponding  $2 \times 2$ matrix block.   The idempotents $\f_{( \tilde n)}$, $\f_{(- \tilde n)}$ act as the diagonal matrix units
$\mathsf{E_{\underline{\mathsf 1},\underline{\mathsf 1}}}$, $\mathsf{E_{-\underline{\mathsf 1},-\underline{\mathsf 1}}}$, respectively.   The remaining basis elements of the matrix block are the matrix units
$\mathsf{E_{\underline{\mathsf 1},-\underline{\mathsf 1}}}$, $\mathsf{E_{-\underline{\mathsf 1},\underline{\mathsf 1}}}$.
\smallskip
\item[$\bullet$] \emph{The $\G = \D_n$, $2 \leq n \leq \infty$ case:}  Suppose  first that $n \ge 3$. The symmetric tensors in $\VV^{\ot 2}$ 
are reducible in the $\D_n$-case  and decompose into a direct sum of  the  one-dimensional $\G$-module $\G^{(0')}$ and the two-dimensional irreducible $\G$-module  $\G^{(2)}$.  Let    $\f_{(0')}$ and $\f_{(2)}$ denote the corresponding projections.  
Note that $\f_{(0')} + \f_{(2)} = \f_2 = {\bf 1} - \half \e_1$.   
Starting with $\lambda = (1)$ (so $\f_{(1)} = {\bf 1}$), $\mu = (2)$ and $\nu = (3)$, and applying  the recursive
procedure, we obtain the idempotents $\f_{(k)} = \f_{(k-1)} - \f_{(k-1)}\e_{k-1}\f_{(k-1)}$ for all $k= 3,4, \dots$ in the $\D_\infty$-case, and  for $3 \leq k \leq n-1$ in the $\D_n$-case.   Now $\G^{(n-1)} \ot \VV = \G^{(n-2)} \oplus \G^{(n)} \oplus \G^{(n')}$, where
$ \G^{(n)}$ and $\G^{(n')}$ are   one-dimensional $\G$-modules.  Denoting the projections onto them by 
$\f_{(n)}$ and $\f_{(n')}$   (they can be constructed using \eqref{projectors}), we have    $\f_{(n-2)} +\f_{(n)}+\f_{(n')} = \f_{(n-1)} \ot {\bf 1}$.
Thus,  $\Z_n = \langle \Z_{n-1}, \e_n, \f_{(n)} \rangle$.  

\quad Now when  $\G = \DD_2$,  then $\br(\G) = 1$, and the branch node $\br(\G)$   has degree 4 in the
corresponding Dynkin diagram.      In this case  \[1-\half \e_1 = \f_2 = \f_{(0')} + \f_{(2')} + \f_{(2)},\] 
where the 3 summands on the right are mutually orthogonal
idempotents giving the projections onto the  one-dimensional irreducible $\G$-modules
$\G^{(0')}$, $\G^{(2')}$, and $\G^{(2)}$, respectively.   Thus  $\Z_2  = \CC {\bf 1} \oplus \CC \e_1 \oplus \CC \f_{(0')} \oplus \CC \f_{(2')}$.

\quad  Note that when $2 \leq n < \infty$, and $\mu = (n)$ or $(n')$ (or $(0')$ when $n=2$),   then 
 by Lemma \ref{lem:fmu},   
 \[\varepsilon_n(\f_\mu) =   \frac{1}{4}\f_{(n-1)}, \qquad 
\e_n \f_\mu \e_n  =  2 \varepsilon_n(\f_\mu) \e_n  =    \half \f_{(n-1)} \e_n,\]
(where $\f_{(n-1)} = \f_{(1)} = \f_1 = {\bf 1}$  when $n =2$).  

\end{itemize}
\end{examples} 

\begin{thm}\label{thm:JBC}   Let $\G$,  $\br(\G)$, and $\diam(\G)$ be as in {\rm (\ref{eq:branch})}, and let  $\Z_k = \Z_k(\G)$.  Then $\Z_1 = \CC{\bf 1} \cong \Z_0$.   Moreover, 
\begin{enumerate}
 \item[{\rm (a)}]  if $1 \le k < \diam(\G)$, and $k \not= \tilde n -1$ in the case $\G = \C_n$,  then  
 $\Z_{k+1} =  \Z_k  \e_k \Z_k \oplus \End_\G(\VV^{\otimes k}_\text{\rm new})$, where $\End_\G(\VV^{\otimes k}_\text{\rm new})$ is a commutative subalgebra of dimension equal to the number of nodes in $\mathcal{R}_\VV(\G)$ a distance $k$ from the trivial node;
 \item[{\rm (b)}]  if  $k \ge\diam(\G)$,  then $\Z_{k+1} = \Z_k \e_k \Z_k$;
 \item[{\rm (c)}]  if  $1 \le k  < \diam(\G)$, $k \neq \br(\G)$, and $k \neq n-1$ in the case $\G = \DD_n$, then $\Z_{k+1} = \langle \Z_k, \e_k \rangle$;   
\item[{\rm (d)}]    if $k = \br(\G)$ and $\G \neq \DD_2$,  then  $\Z_{k+1} = \langle \Z_k, \e_k, \f_{\mu} \rangle$, where $\mu$ is either of the two elements in $\Lambda_{k + 1}(\G) \setminus \Lambda_{k -1}(\G)$,  and $\f_\mu$ is the projection of $\VV^{\otimes (k+1)}_{\mathsf{new}}$ onto $\G^\mu$,
\item[{\rm (e)}]  if $\G = \C_n$ {\rm ($n < \infty$)},   then $\Z_{\tilde n} = \langle \Z_{\tilde n-1}, \e_{\tilde n -1}, \mathsf{E_{p,q}}, \hbox{\rm for} \ \mathsf{p,q} \in \{-\underline{\mathsf{1}}, \underline{\mathsf{1}}\}\rangle$, where $\mathsf{E_{p,q}}$ is the matrix unit in Examples \ref{ProjectionExample}.

\item[{\rm (f)}]   if  $\G = \DD_n$  {\rm ($2 < n < \infty$)}    then $\Z_n = \langle \Z_{n-1}, \e_{n-1}, \f_{\mu}\rangle$, where $\mu\in \{(n), (n')\}$

\item[{\rm (g)}]   
if $\G = \DD_2$,  then  $\Z_2 = \langle \Z_1, \e_1, \f_{\mu_1}, \f_{\mu_2}  \rangle$ where $\mu_1,  \mu_2 \in \{ (0'), (2), (2')\}$, $\mu_1 \not= \mu_2. $
\end{enumerate}
\end{thm}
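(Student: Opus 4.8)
\emph{Overview and the structural parts.} I would prove the structural statements (a), (b) directly from the Jones basic construction, and reduce the ``generators'' statements (c)--(g) to a uniform surjectivity check inside the new summand of $\Z_{k+1}$. For the opening equality, $\VV$ is an irreducible $\G$-module for every non-abelian $\G$ on the list, so Schur's lemma gives $\Z_1=\End_\G(\VV)=\CC{\bf 1}=\Z_0$; for $\G=\C_n$ or $\C_\infty$ one has instead $\VV=\G^{(1)}\oplus\G^{(-1)}$ and $\Z_1$ is a $2$-dimensional diagonal algebra, which is accounted for by part (e) and Section~2. For (a) and (b) the essential input is \eqref{JBC-decomp}: since $\Z_k\e_k\Z_k$ is a two-sided ideal of $\Z_{k+1}$ (Proposition~\ref{prop:ZkZk1}) and $\Z_{k+1}$ is semisimple, the complementary sum of simple components $S:=\End_\G(\VV^{\ot(k+1)}_{\mathsf{new}})$ is also an ideal and $\Z_{k+1}=\Z_k\e_k\Z_k\oplus S$ as algebras. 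Statement (b) is the vanishing $S=0$ when $k\ge\diam(\G)$; I would prove $\Lambda_{k+1}(\G)=\Lambda_{k-1}(\G)$ for such $k$ by inspection of $\mathcal{R}_\VV(\G)$, using the parity of walk lengths together with the distance bound $\diam(\G)$ for the tree-type graphs and the explicit cyclic description (with $\tilde n$) for $\C_n$. For (a), when $k<\diam(\G)$ (and $k\ne\tilde n-1$ if $\G=\C_n$) each new vertex $\mu\in\Lambda_{k+1}(\G)\setminus\Lambda_{k-1}(\G)$ is reached by a \emph{unique} walk of length $k+1$ from $0$ (no backtracking is possible), so $m_{k+1}^\mu=1$; hence $S\cong\bigoplus_{\mu\ \mathsf{new}}M_1(\CC)$ is commutative, of dimension equal to the number of new vertices.

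\emph{A uniform reduction for (c)--(g).} Every generating set in (c)--(g) contains $\Z_k$ and $\e_k$, hence contains $\Z_k\e_k\Z_k$; by the ideal decomposition above, the subalgebra they generate equals $\Z_{k+1}$ exactly when its image under the algebra projection $\pi\colon\Z_{k+1}\to S$ is all of $S$. Since $\e_k=1\cdot\e_k\cdot 1\in\Z_k\e_k\Z_k$ we have $\pi(\e_k)=0$, so everything comes down to whether $\pi(\Z_k)$ together with the images of the extra generators fills $S$. A direct computation pins down $\pi(\Z_k)$: writing $\VV^{\ot k}=\bigoplus_\lambda M_\lambda\ot\G^\lambda$ and tensoring by $\VV$, the $\G^\mu$-multiplicity space in $\VV^{\ot(k+1)}$ is $N_\mu=\bigoplus_{\lambda\in\Lambda_k,\ \lambda\sim\mu}M_\lambda$, and $\Z_k\ot{\bf 1}$ preserves each isotypic block $N_\mu\ot\G^\mu$ and acts on $N_\mu$ block-diagonally with respect to $N_\mu=\bigoplus_\lambda M_\lambda$; thus $\pi(\Z_k)$ is the block-diagonal subalgebra $\{(\bigoplus_{\lambda\sim\mu}a_\lambda)_{\mu}\}$ of $S=\bigoplus_{\mu\ \mathsf{new}}\End(N_\mu)$, which coincides with all of $S$ precisely when every new $\mu$ has a unique $\Lambda_k$-neighbor and no vertex of $\Lambda_k$ is adjacent to two new vertices. (Alternatively one can argue concretely as in Examples~\ref{ProjectionExample}: the recursion $\f_\nu=\f_\mu-\tfrac{d^\lambda}{d^\mu}\f_\mu\e_{k+1}\f_\mu$ of Proposition~\ref{prop:fnu} exhibits each new projection idempotent $\f_\nu$ inside $\langle\Z_k,\e_k\rangle$, and in the commutative case these idempotents span $S$.)

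\emph{Running the criterion case by case.} I would now compare with the table \eqref{eq:branch}. In (c) the hypotheses $k\ne\br(\G)$, $k\ne n-1$ for $\DD_n$ (and, implicitly from (a), $k\ne\tilde n-1$ for $\C_n$) say exactly that no branch vertex of $\mathcal{R}_\VV(\G)$ lies at distance $k$ and that $\C_n$ has not yet ``wrapped'', so the block structure is trivial, $\pi(\Z_k)=S$, and $\Z_{k+1}=\langle\Z_k,\e_k\rangle$. In (d) and (f) the vertex at distance $k$ is a degree-$3$ branch vertex $b$ with $m_k^b=1$, so both new vertices have $b$ as their unique $\Lambda_k$-neighbor; hence $\pi(\Z_k)=\CC{\bf 1}_S$ and $S\cong\CC\oplus\CC$, and adjoining one projection $\f_\mu$ (whose image in $S$ is a primitive idempotent) gives $\langle{\bf 1}_S,\f_\mu\rangle=S$. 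In (g) the branch vertex of $\DD_2$ has degree $4$, so $S\cong\CC^{\oplus 3}$ while $\pi(\Z_1)=\CC{\bf 1}_S$; two distinct projections $\f_{\mu_1},\f_{\mu_2}$ together with ${\bf 1}_S$ then span $S$. In (e), at the wrap-around $k=\tilde n-1$ for $\C_n$, $\VV^{\ot\tilde n}_{\mathsf{new}}=\CC\v_{\underline{\mathsf 1}}\oplus\CC\v_{-\underline{\mathsf 1}}$ and $S\cong M_2(\CC)$; the four matrix units $\mathsf{E}_{\mathsf{p},\mathsf{q}}$, $\mathsf{p},\mathsf{q}\in\{-\underline{\mathsf 1},\underline{\mathsf 1}\}$, already generate $M_2(\CC)$, giving $\Z_{\tilde n}=\langle\Z_{\tilde n-1},\e_{\tilde n-1},\mathsf{E}_{\mathsf{p},\mathsf{q}}\rangle$.

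\emph{Main obstacle.} The only genuinely delicate part is the combinatorial bookkeeping on the simply-laced Dynkin diagrams: one must check, uniformly across $\C_n,\DD_n,\TT,\OO,\II,\C_\infty,\DD_\infty$, precisely which levels $k$ produce multiplicities $>1$ or branch/wrap behaviour, and confirm that these coincide with the recorded values of $\br(\G)$ and $\diam(\G)$ in \eqref{eq:branch} (and that the $\C_n$ wrap-around genuinely forces the extra matrix-unit generators, i.e.\ $\langle\Z_{\tilde n-1},\e_{\tilde n-1}\rangle$ maps only onto the diagonal of $M_2(\CC)$). Once that is in place, the ideal decomposition, the surjectivity criterion, and the small linear-algebra computations inside $S\cong\CC^m$ or $M_2(\CC)$ are routine; I would also flag that the superscript in part (a)'s $\VV^{\ot k}_{\mathsf{new}}$ and the phrase ``distance $k$'' should be read as ``$\VV^{\ot(k+1)}_{\mathsf{new}}$'' and ``number of new vertices'' to agree with \eqref{JBC-decomp}.
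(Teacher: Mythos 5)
Your proof is correct and recovers all seven parts, but it organizes the generating-set parts (c)--(g) differently from the paper. The paper handles each case by exhibiting the needed idempotents concretely: $\f_\nu = \f_\mu - \tfrac{d^\lambda}{d^\mu}\f_\mu\e_k\f_\mu$ from Proposition~\ref{prop:fnu} in case (c); the decomposition $\f_{k+1}=\f_{\beta_1}+\f_{\beta_2}$ with ${\bf 1}-\f_{k+1}\in\langle\e_1,\dots,\e_k\rangle$ in case (d); the identities $\f_{(n-1)}=\f_{(n-2)}+\f_{(n)}+\f_{(n')}$ and $\f_{(0')}+\f_{(2)}+\f_{(2')}=1-\tfrac12\e_1$ in (f), (g); and the explicit $2\times 2$ block in (e). You instead package (c)--(g) into a single surjectivity criterion: since any candidate generating set containing $\Z_k$ and $\e_k$ already contains the ideal $\Z_k\e_k\Z_k$, it generates $\Z_{k+1}$ iff it surjects under $\pi$ onto the complementary ideal $S=\End_\G(\VV^{\ot(k+1)}_{\mathsf{new}})$, and the block-diagonal description of $\pi(\Z_k)\subseteq S$ (using the Bratteli-diagram restriction structure) reduces each case to reading off the degree and multiplicity data near level $k$, then checking agreement with the table~\eqref{eq:branch}. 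Both arguments rest on the same combinatorial facts about $\mathcal R_\VV(\G)$. Your version makes it transparent \emph{why} one extra idempotent suffices at a degree-$3$ branch, two at the degree-$4$ branch of $\DD_2$, and all four matrix units at the $\C_n$ wrap-around (where $\pi(\Z_{\tilde n-1})$ is only the diagonal of $M_2(\CC)$); the paper's version has the advantage of actually exhibiting $\f_\nu$ as a word in $\e_1,\dots,\e_k,\b_1$, which it reuses in Propositions~\ref{prop:rels} and~\ref{prop:cdrels}. Your editorial flags are accurate: in (a) the superscript should read $\VV^{\ot(k+1)}_{\mathsf{new}}$ and ``distance $k$'' should be ``distance $k+1$'' (this is what the paper's proof of (a) actually uses), and (c) tacitly also excludes $k=\tilde n -1$ for $\C_n$, that case being covered by (e).
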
 
\begin{proof} (a)  Since $k < \diam(\G)$,  $\VV^{\otimes (k+1)}_\text{new}$ is a direct sum of irreducible $\G$-modules each with multiplicity 1 (except for the case where $\G = \C_n$ and $k  =\tilde n -1$ which is handled in part (e)), and the centralizer $\End_\G(\VV^{\otimes (k+1)}_\text{new})$ is commutative and spanned by central idempotents which project onto the irreducible summands.  Thus,  the dimension equals the number of new modules that appear at level $k+1$.

(b) When  $k \ge \diam(\G)$,   
$\VV^{\otimes (k+1)}_\text{new} =0$, and $\Z_{k+1} =  \Z_k \e_k \Z_k$ follows from  \eqref{JBC-decomp}.  

\tikzstyle{rep}=[circle,   thick,  minimum size=.40cm,inner sep=0pt,draw= black,     fill=black!20] 
\tikzstyle{norep}=[circle,    thick,minimum size=.04cm,inner sep=0pt,draw= white,  fill=white]                                          
(c)  If  $ 
\begin{array}{c}                                   
\begin{tikzpicture}[>=latex,text height=1.5ex,text depth=0.25ex]
  \matrix[row sep=1cm,column sep=.9cm] {
   \node (V1) [rep] {$\lambda$};   &   
   \node (V2) [rep] {$\mu$};  &
   \node  (V3) [rep] {$\nu$}; 
   \\
        };    
    \path
        (V1) edge[thick] (V2)	
        (V2) edge[thick] (V3)	
	;           
\end{tikzpicture}\end{array}
$
represents the neighborhood of a node $\mu$ in $\mathcal{R}_\VV(\G)$  with $\mu$ a distance $k$ from the trivial node and $\deg(\mu) = 2$, then $\G^{(\nu)}$ has multiplicity 1 in $\VV^{\otimes (k+1)}$.   
By Proposition \ref{prop:fnu}, the central projection from $\VV^{\otimes (k+1)}$ onto $\G^{\nu}$ is given by 
$\f_\nu = \f_\mu -    \frac{d^{\lambda}}{d^\mu} \, \f_\mu \e_{k} \f_\mu$,
where $\f_\mu \in \Z_k$ projects $\VV^{\otimes k}$ to $\G^{\mu}$.  Thus  $\f_{\nu} \in \langle \Z_{k}, \e_{k} \rangle$, and the set of $\f_\nu$, { with} $\nu$ a distance $k+1$ from the trivial node in  $\mathcal{R}_\VV(\G)$,  generate  $\End_\G(\VV^{\otimes (k+1)}_\text{new})$. 
The result then follows from part (a).

(d)  if $k = \br(\G)$ and $\G \neq \DD_2$,  then $\VV^{\otimes (k+1)}_\text{new} \cong \VV(k+1) \cong  \G^{\beta_1} \oplus \G^{\beta_2}$ where $\{\beta_1, \beta_2\} = \Lambda_{k + 1}(\G) \setminus \Lambda_{k -1}(\G)$. The Jones-Wenzl idempotent decomposes as  $\f_{k+1} = \f_{\beta_1} +  \f_{\beta_2}$  as in \eqref{eq:branchdec} (where the  $\f_{\beta_j}$ can be constructed as in \eqref{projectors}).  We know  from \eqref{JWrelations} that ${\bf 1} - \f_{k+1} \in \langle \e_1, \ldots, \e_{k} \rangle \subseteq \Z_k \e_k \Z_k,$ and we have  $\f_{\beta_1} + \f_{\beta_2} =  {\bf 1} - ({\bf 1} - \f_{k+1}) $. Thus,  $\Z_{k+1}$ is generated by $\Z_k, \e_k,  \f_{\beta_j}$ for $j = 1$ or $j=2$. 

(e)  If $\G = \C_n$ and $k  =\tilde n -1$, then $\G^{(\tilde n)}$ has multiplicity 2 in $\VV^{\otimes \tilde n}$, where $\G^{(\tilde n)} := \G^{(0)}$ if $n$ is odd. In this case,  $\End_\G(\VV^{\otimes \tilde n}_\text{new})$  is 4-dimensional with a basis of matrix units as  in  Examples \ref{ProjectionExample}.

 (f)   If  $\G = \D_n$ with $2 <  n < \infty$, then 
 $\VV^{\otimes n}_{\text{new}}  \cong \G^{(n)} \oplus \G^{(n')}$, and we let $\f_{(n)}$ and $\f_{(n')}$ project $\VV_{\text{new}} ^{\otimes n}$ onto $\G^{(n)}$ and $\G^{(n')}$, respectively.  As in part (d),  these minimal central idempotents can be constructed using \eqref{projectors}; the only difference in this case is that $\f_{(n)} + \f_{(n')}$ does not equal the Jones-Wenzl idempotent $\f_n$; however,  $\f_{(n-1)} = \f_{(n-2)} + \f_{(n)} + \f_{(n')}$ holds.

 (g)  When $\G = \DD_2$,  $\VV^{\otimes 2}_\text{new} \cong \G^{(0')} \oplus \G^{(2)}\oplus \G^{(2')}$ and the corresponding central idempotents $\f_{(0')}, \f_{(2)}, \f_{(2')}$ can be constructed as in \eqref{projectors}.  Furthermore $\f_{(0')}+\f_{(2)} + \f_{(2')} =
\f_2 = 1-\half \e_1$, so  $\Z_2$ is generated by  $\e_1$, $\Z_1$, and  any two of $\f_{(0')}, \f_{(2)}, \f_{(2')}$.
\end{proof}

\newpage
 
 \begin{examples}\label{ex:idems} 
 For all $\G \neq \C_n$ for $2 \leq n \leq \infty$, we have $\Z_1 = \CC {\bf 1} \cong \Z_0$. 
  \begin{itemize}
  \smallskip
 \item[$\bullet$]
If   $\G = \OO$, then from Theorem \ref{thm:JBC}  we deduce the following:  $\Z_2 = \Z_1\e_1\Z_1 + \CC \f_2 = \CC \e_1 + \CC \f_2 \cong \TL_2(2)$, where $\f_2 = 1 - \half \e_1$;
\ $\Z_3 =  \Z_2\e_2\Z_2 + \CC \f_3 \cong \TL_3(2)$ where $\f_3 = \f_2 -\frac{2}{3}\f_2 \e_2 \f_2$; \  $\Z_4 = \langle \Z_3, \e_3, \f_{(4^+)} \rangle  
= \langle \Z_3, \e_3, \f_{(4^-)} \rangle$ where $\f_{(4^+)} + \f_{(4^-)} = \f_4 = \f_3 - \frac{3}{4}\f_3 \e_3 \f_3$;  $\Z_5 = \langle \Z_4, \e_4\rangle$; \ $\Z_6 = \langle \Z_5, \e_5\rangle$; 
and $\Z_{k+1} = \Z_k \e_k \Z_k$ for all $k \geq 7$.     When $2 \leq k \leq \diam(\OO)=6$, there is exactly one new
idempotent added each time, except for $k=4$,  where the two idempotents $\half \e_3$ and $\f_{(4^+)}$ 
must be adjoined to $\Z_3$ to get $\Z_4$.   Each added idempotent corresponds to a highlighted edge in the Bratteli diagram of $\OO$    
(see Section \ref{subsec:Bratteli}).   The highlighted edges together with the nodes attached to them give the Dynkin diagram of
$\hat {\mathsf{E}}_7$.   (In fact, for all groups $\G$ except for the cyclic groups of odd order, the added idempotents give the corresponding Dynkin diagram as a subgraph of the Bratteli diagram.)
\smallskip
\item[$\bullet$]
If $\G = \DD_n$ for $n > 2$.   
 Then $\Z_2 = \langle \Z_1, \e_1, \f_{(0')} \rangle =  \langle \Z_1, \e_1, \f_{(2)}\rangle$ where $\f_{(0')} + \f_{(2)} = \f_2 = 1-\half \e_1$; \ $\Z_{k+1} = \langle \Z_k, \e_k \rangle$ for $2 \leq k \leq n-1$; \  $\Z_n =  \langle \Z_{n-1}, \e_{n-1},\f_{(n)}, \f_{(n')} \rangle$; \
$\Z_{k+1} = \Z_k \e_k \Z_k$ for all $k \geq n$.  (In particular when $n = \infty$,  $\Z_{k+1} = \langle \Z_k, \e_k \rangle$ for all $k \geq 2$.)   
\end{itemize} \end{examples} 
  
\subsection{Relations }
Recall that $\Z_k(\G) \supseteq \TL_k(2)$ for all $k \geq 0$  and that $\TL_k(2)$ has  
generators $\e_i \ (1 \leq i < k)$,  which satisfy the following relations from \eqref{eq:TLrels}: 
\begin{enumerate}
\item[{}]\begin{enumerate}
\item[{\rm (a)}] $\e_i ^2 = 2 \e_i, \qquad  \e_i \e_{i \pm 1} \e_i = \e_i$,   \qquad  \hbox{\rm and}  \qquad   $\e_i \e_j = \e_j \e_i, \ \ \hbox{\rm for}\ \  | i-j |>1.$
\end{enumerate}  \end{enumerate}
In the next two results, we identify additional generators needed to generate $\Z_k(\G)$  and the relations they satisfy.   In most instances,
these are not minimal sets of generators (as is evident from Theorem \ref{thm:JBC}), but rather the generators are chosen because they satisfy some
reasonably nice relations.

 \begin{prop}\label{prop:rels}  Let $\Z_k = \Z_k(\G)$ for all $k \ge 0$,  $\ell=\br(\G)$,  and $\diam(\G) = \ell + m$.   Suppose   $\nu_0 = (\ell), \nu_1, \dots, \nu_{m}$ is  a  sequence of distinct nodes from the branch node  $(\ell)$  to the node $\nu_m$, which is  a distance $\diam(\G) = \ell+m$ from $0$.   Set  $\b_j: = \f_{\nu_j}$ {\rm (}the projection of $\VV_{\mathsf{new}}^{\ot ( \ell+j)}$ onto $\G^{\nu_j}${\rm )},  and let $d_{j} = d^{\nu_j} = \dimm \G^{\nu_j}$   for $j = 0,1,\dots, m$.     
 Then the following hold:
 \begin{itemize} \item[{\rm (i)}]  If $k \leq \ell$,  then $\Z_k= \TL_k(2)$, and $\Z_k$ has generators
 $\e_i \ (1 \leq i < k)$ which satisfy {\rm (a)}. 
\item[{\rm (ii)}] If  $\ell <  k\leq \ell+m = \diam(\G)$, and $k \neq \diam(\G)$ for $\G = \C_n, \D_n$, $n < \infty$,   then  $\Z_k$ has generators  $\e_i \ (1 \leq i < k)$ and $ \b_j \  {\rm (}1 \leq j \leq k-\ell${\rm )} which  satisfy  {\rm (a)} and 
 \begin{enumerate} 
 \item[{\rm (b)}]  $\b_i \b_j = \b_j = \b_j \b_i,$  \quad  for all  $0 \leq i \leq j \leq   k - \ell$;  
\item[{\rm (c)}]    $\b_{j+1} = \b_j - \frac{d_{j-1}}{d_j} \b_j \e_{\ell+j} \b_j,$ for all $j =1,\dots,  k - \ell -1;$
 \item [{\rm (d)}]  $\e_i \b_j = 0 = \b_j \e_i$, \ \   for all   $1 \leq i < \ell+j$, \ and  \ $\e_i \b_j = \b_j \e_i,$ for
all  $ \ell +j < i \leq   k$;    
\item [{\rm (e)}]   $\e_{\ell+j} \b_j \e_{\ell+j}   = 
 \frac{d_j}{d_{j-1}} \b_{j-1}\e_{\ell+j}$,     for all $j = 1,\dots, k - \ell$. 
\end{enumerate}  
\item[{\rm (iii)}] If  $k > \diam(\G)$, and $\G \neq \C_n, \D_n$ for $n < \infty$, then $\Z_k$ has generators $\e_i$ $(1 \leq i < k)$ and
 $\b_j$ $(1 \leq j \leq m)$
which satisfy {\rm (a)-(e)} and  
\begin{enumerate}
\item[{\rm (f)}]  $\b_{m}  = \frac{d_{m-1}}{d_{m}}  \b_{m}  \e_{\ell +m} \b_{m}$.
\end{enumerate}
\end{itemize} 
 \end{prop}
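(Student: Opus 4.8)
The plan is to prove the proposition in two halves --- that the listed elements generate $\Z_k(\G)$, and that they satisfy the stated relations --- by iterating Theorem~\ref{thm:JBC} (for the generators) together with Lemma~\ref{lem:fmu} and Proposition~\ref{prop:fnu} (for the relations) along the fixed path $\nu_0 = (\ell), \nu_1, \dots, \nu_m$ in $\mathcal{R}_\VV(\G)$; I write $\b_j = \f_{\nu_j}$ and $d_j = d^{\nu_j}$. For generation I would iterate Theorem~\ref{thm:JBC}, starting from $\Z_1 = \CC{\bf 1}$ (or, when $\G = \C_n$ so $\ell = 0$, from $\Z_1 = \langle\b_1\rangle$, obtained from $\Z_0$ by Theorem~\ref{thm:JBC}(d) with the vacuous ``$\e_0$''). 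Part~(c) gives $\Z_{k'+1} = \langle\Z_{k'},\e_{k'}\rangle$ for $1 \le k' < \ell$, so $\Z_k = \langle\e_1,\dots,\e_{k-1}\rangle$ for $k \le \ell$; since $\TL_k(2)\subseteq\Z_k$ is generated by the same $\e_i$, in fact $\Z_k = \TL_k(2)$, which is~(i). At level $\ell+1$, part~(d) gives $\Z_{\ell+1} = \langle\Z_\ell,\e_\ell,\f_\mu\rangle$ for \emph{either} new module $\mu$; taking $\mu = \nu_1$ gives $\Z_{\ell+1} = \langle\e_1,\dots,\e_\ell,\b_1\rangle$. Part~(c) again gives $\Z_{k'+1} = \langle\Z_{k'},\e_{k'}\rangle$ for $\ell < k' < \diam(\G)$, and part~(b) gives $\Z_{k'+1} = \Z_{k'}\e_{k'}\Z_{k'}\subseteq\langle\Z_{k'},\e_{k'}\rangle$ for $k'\ge\diam(\G)$; the hypotheses on $k$ in (ii) and (iii) keep $k$ strictly below the exceptional levels $n-1$ for $\DD_n$ and $\tilde n-1$ for $\C_n$ (and make (ii)--(iii) vacuous for $\DD_2$, whose $\nu_m$ is not unique). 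Thus $\Z_k = \langle\e_1,\dots,\e_{k-1},\b_1\rangle$ for all $k > \ell$, and since relation~(c) expresses each $\b_{j+1}$ as a polynomial in $\b_j$ and $\e_{\ell+j}$, the larger set $\{\e_i\}\cup\{\b_j\}$ over the stated ranges generates $\Z_k$ as well.

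For the relations, (a) holds because the $\e_i$ span a copy of $\TL_k(2)$. Relation~(c) is the defining recursion of Proposition~\ref{prop:fnu} with $\lambda=\nu_{j-1},\mu=\nu_j,\nu=\nu_{j+1}$ --- legitimate because each interior $\nu_j$ has degree $2$ in $\mathcal{R}_\VV(\G)$ --- and relation~(e) is Lemma~\ref{lem:fmu}(iv) with $\lambda=\nu_{j-1},\mu_i=\nu_j$; a dimension count on $\G^{\nu_m}\ot\VV = \G^{\nu_{m-1}}$ gives $2d_m = d_{m-1}$, so the coefficient in~(f) equals $2$. In~(b), the one-step cases $\b_{j-1}\b_j = \b_j = \b_j\b_{j-1}$ are Lemma~\ref{lem:fmu}(i)/Proposition~\ref{prop:fnu}(ii), and the general case follows by a short induction, $\b_i\b_j = \b_i(\b_{j-1}\b_j) = (\b_i\b_{j-1})\b_j = \b_j$ and symmetrically. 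The commuting half of~(d), $\e_i\b_j = \b_j\e_i$ for $i > \ell+j$, holds because $\b_j = \f_{\nu_j}\in\Z_{\ell+j}$ and $\e_i$ act on disjoint tensor slots.

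The two steps needing genuine (if short) arguments are the vanishing half of~(d) and relation~(f). For $\e_i\b_j = 0 = \b_j\e_i$ when $i < \ell+j$, I would induct on $j$: the base $j = 1$ uses that $\b_1 = \f_{\nu_1}$ is a summand of the Jones--Wenzl idempotent $\f_{\ell+1}$ (equivalently, $\G^{\nu_1}$, being new at level $\ell+1$, sits inside $\mathsf{S}(\VV^{\ot(\ell+1)}) = \f_{\ell+1}(\VV^{\ot(\ell+1)})$), so $\f_{\ell+1}\b_1 = \b_1$ and hence $\e_i\b_1 = \e_i\f_{\ell+1}\b_1 = 0$ by (JW2) for $i < \ell+1$; in the step, rewriting $\b_j$ via~(c) kills all terms with $\e_i\b_{j-1}$, $i < \ell+j-1$, by hypothesis, and for $i = \ell+j-1$ one substitutes~(e) and uses that $\b_{j-2}$ commutes with $\e_{\ell+j-1}$ and $\b_{j-2}\b_{j-1} = \b_{j-1}$ to make the surviving two terms cancel. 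For~(f) (only $\G = \TT,\OO,\II$), the leaf $\nu_m$ satisfies $\G^{\nu_m}\ot\VV = \G^{\nu_{m-1}}$, so $\f_{\nu_m}\ot{\bf 1}$ projects $\VV^{\ot(\ell+m+1)}$ onto $\G^{\nu_{m-1}}\subseteq\VV^{\ot(\ell+m+1)}_{\mathsf{old}}$. Running the proof of Proposition~\ref{prop:fnu}(v) in this degenerate case, $\p := \frac{d_{m-1}}{d_m}\b_m\e_{\ell+m}\b_m$ is a nonzero idempotent (use~(e) and~(b) for $\p^2 = \p$, and the argument of \ref{prop:fnu}(v) for $\p\ne0$) lying in $\Z_{\ell+m}\e_{\ell+m}\Z_{\ell+m} = \End_\G(\VV^{\ot(\ell+m+1)}_{\mathsf{old}})$, with $\p = (\b_m\ot{\bf 1})\p = \p(\b_m\ot{\bf 1})$; hence $\b_m\ot{\bf 1}-\p$ is an idempotent orthogonal to $\p$ whose image is a $\G$-submodule of $\G^{\nu_{m-1}}$, and since $\Im\,\p = \G^{\nu_{m-1}}$ already (irreducibility together with $\p\ne0$), that image must vanish, giving $\b_m = \p$, which is~(f).

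I expect the main obstacle to be organizational rather than conceptual: one must carefully track which branch of Theorem~\ref{thm:JBC} applies at each level (especially the exceptional levels for $\C_n$ and $\DD_n$), confirm that $\deg(\nu_j) = 2$ at each step so that Proposition~\ref{prop:fnu} is applicable, and execute the nested induction in~(d) and the degenerate-case analysis in~(f) cleanly; individually none of these is difficult, but together they constitute the bulk of the work.
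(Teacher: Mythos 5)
Your proposal is correct and follows essentially the same route as the paper: the paper's proof simply asserts that (i) follows from $\TL_k(2)\subseteq\Z_k$ plus a dimension count, that (ii) follows from Theorem~\ref{thm:JBC}, Lemma~\ref{lem:fmu}, and Proposition~\ref{prop:fnu}, and proves (f) by the same Schur's-lemma argument you use. You have correctly filled in the details the paper leaves implicit, in particular the inductive verification of the vanishing half of~(d) via the recursion~(c) and relation~(e), and the base case via the Jones--Wenzl property (JW2).
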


\begin{proof}   For (i) we have  $\TL_k(2) \subseteq \Z_k$ for all $k \ge 0$, and thus the $\e_i$ satisfy the relations in (a) by \eqref{eq:TLrels}.  The equality $\TL_k(2) = \Z_k$ is proved by comparing dimensions for $k \le \ell.$   
Part (ii) follows from  Theorem \ref{thm:JBC},  Lemma \ref{lem:fmu}, and
Proposition \ref{prop:fnu}.  
(Note that $\ell = 0$ for $\C_n,$  and, in the notation of Examples \ref{ProjectionExample},  we may take $\b_j = \f_{(+j)}$ for all $j = 1, \dots, \tilde{n}-1$   or take $\b_j = \f_{(-j)}$ for all such $j$, since $\f_{(\pm (j+1))} = \f_{(\pm j)} - \f_{(\pm j)}\e_j\f_{(\pm j)}$ and $\f_{(\mp 1)} = \bf 1 - \f_{(\pm 1)}$.)   
For (iii),  we have from Theorem \ref{thm:JBC} that   $\Z_{k} = \Z_{k-1} \e_{k-1} \Z_{k-1} = \End_{\G}(\VV_{\mathsf{old}}^{\ot k})$ for all $k> \diam(\G)= \ell +m$.     Thus, $\e_1,\dots,\e_{k-1}$, $\b_1, \dots, \b_m$ generate $\Z_k$
and satisfy (a)-(e).     To show that (f) holds, consider  $\G^{\nu_m} \ot \VV  \cong  \G^{\nu_{m-1}}$.  We know  
$ \frac{d_{m-1}}{d_{m}}  \b_{m}  \e_{\ell+m} \b_{m}$ is an idempotent in $\End_{\G}(\G^{\nu_m} \ot \VV)$ (compare the argument for $\mathsf{p}$ in Proposition \ref{prop:fnu}) and $\G^{\nu_m} \ot \VV \cong \G^{\nu_{m-1}}$  is
an irreducible $\G$-module.  By Schur's lemma,   $ \frac{d_{m-1}}{d_m}  \b_m   \e_{\ell+m}  \b_m$
has to be a multiple of the identity,  which is  $\b_m = \b_m \ot {\bf 1}$, but since both are idempotents, they must be equal.   
  \end{proof}    
  
  \begin{remark}  Relation (ii)\,(c) shows that only the $\e_i$ and $\b_1$ are needed to generate $\Z_k$ for $k> \ell$, and the other
  generators $\b_j$, $2 \leq j \leq m$,  can be constructed recursively from them.    However, then relation (f) in (iii) needs to 
 be replaced with  a complicated expression in the $\e_i$ and $\b_1$.    \end{remark}
  
 Proposition \ref{prop:rels} covers all cases except when $k \geq  \diam(\G)$ for $\G = \C_n, \D_n$,   $n < \infty$,
 which will be considered next.  
 
\begin{prop}\label{prop:cdrels}  Assume $\G = \C_n,\D_n$ for $n < \infty$, and let $\Z_k = \Z_k(\G)$ where
$k \geq \diam(\G)$.  Then we have the following:
 \begin{itemize} \item[{\rm ($\C_n$)}]  $\Z_k$ has generators $\e_i \ (1 \leq i < k)$ and
 $\b_{j}^\pm \ (1 \leq j \leq \tilde n = \diam(\G))$ (where   $\b_j^\pm$ is the projection of $\VV_{\mathsf{new}}^{\ot j}$ onto $\G^{(\pm j)}$),   together with $\b_+^- =\mathsf{E_{-\underline{\mathsf 1},\underline{\mathsf 1}}}$, $\b_-^+ =\mathsf{E_{\underline{\mathsf 1},-\underline{\mathsf 1}}}$,  
 such that the relations in {\rm (a)-(e)} hold when  $\b_j =\b_{ j}^+$  or when   $\b_j=\b_{j}^-$.  In addition,   the following relations hold:
 \begin{enumerate}
 \item[{\hbox{\rm (f$_\C$)}}]  $(\b_{\tilde n}^\pm)^2  = \b_{\tilde n}^\pm$ \  and \ $ \b_{i}^\pm \b_{ j}^\mp = 0 = 
 \b_{j}^\mp  \b_{i}^\pm$,   \  for all \  $1 \leq i ,j \leq \tilde n$;  and   
\item[{\hbox{\rm (g$_\C$)}}]    for $\b_+^+  :=\b_{ \tilde n}^+$, and $\b_-^-  := \b_{\tilde n}^-$,   
\[\b_\gamma^\zeta \b_\eta^\vartheta  = \delta_{\gamma,\vartheta} \b_\eta^\zeta \ \ \hbox{\rm for } \ \,
\gamma,\zeta,\eta,\vartheta \in \{-,+\}  \ \ \hbox{\rm and}  \ \   \b_j^{\pm} \b_{\gamma}^\zeta  = 0  =  \b_{\gamma}^\zeta \b_j^{\pm}
 \ \  \ \hbox{\rm for } \ \ \zeta \neq \gamma,  \ \, 1 \leq j < \tilde n.\]
 \end{enumerate} 
  \item[{\rm ($\D_n$)}]  $\Z_k$ has generators $\e_i \ (1 \leq i < k)$,  $\b_j \  (1 \leq j  < n = \diam(\G))$, and $\b'$,  where 
   \ $\b_j$ is the projection of $\VV^{\ot (j+1)}$ onto  $\G^{(j+1)}$, and 
  $\b'$  is the projection of $\VV^{\ot n}$ onto $\G^{(n')}$.    They  satisfy the
  relations in {\rm (a)-(f)} of Proposition \ref{prop:rels},  and additionally
  \begin{enumerate} 
\item[{\hbox{\rm (g$_\D$)}}]   $\b_j \b' = \b' = \b' \b_j$, \ for $1 \leq j <n$, \,  $(\b')^2 = \b'$, and $\b_{n-1} \b' = 0 = \b' \b_{n-1}$;
\item[{\hbox{\rm (h$_\D$)}}]   $\e_n \b' \e_n = \half \b_{n-1} \e_n$, \  $\e_i \b' = 0 = \b'\e_i = 0$ for   $1 < i < n$,   and $\e_i \b' = \b' e_i$ for   $i > n$; 
\item[{\hbox{\rm (i$_\D$)}}] $\b_{n-1} =2\b_{n-1} \e_n \b_{n-1}$ and $\b' =2\b' \e_n \b'$. 
\end{enumerate} 
 \end{itemize}
 \end{prop}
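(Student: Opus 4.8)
The plan is to treat $\C_n$ and $\D_n$ separately, in each case first showing that the listed elements generate $\Z_k$ and then verifying the relations, the latter being the bulk of the work. For generation I would argue formally from the earlier results. By Theorem \ref{thm:JBC}(b), $\Z_{j+1} = \Z_j\e_j\Z_j$ for all $j \ge \diam(\G)$, so a short induction reduces $\Z_k$ to $\Z_{\diam(\G)}$ together with the generators $\e_{\diam(\G)},\dots,\e_{k-1}$; and $\Z_{\diam(\G)}$ is handled by Theorem \ref{thm:JBC}(e),(f), which adjoin the matrix units $\mathsf{E_{p,q}}$ for $\C_n$ and the idempotents $\f_{(n)},\f_{(n')}$ for $\D_n$ on top of $\Z_{\diam(\G)-1}$, while $\Z_{\diam(\G)-1}$ and all lower algebras are generated by the $\e_i$ and the chain idempotents via the recursive construction of Proposition \ref{prop:fnu} (the content of Proposition \ref{prop:rels}(ii)), applied along each arm of the cycle in the $\C_n$ case and using $\b_1^+ + \b_1^- = {\bf 1}$ to pass between the two arms.

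For $\G = \C_n$, relations (a)--(e) with $\b_j = \b_j^{\pm}$ come directly from Lemma \ref{lem:fmu} and Proposition \ref{prop:fnu} applied node by node along the arm $0 \to \pm 1 \to \pm 2 \to \cdots$ of $\mathsf{\hat{A}_{n-1}}$ (each step being $\G^{(\pm j)}\ot\VV = \G^{(\pm(j-1))}\oplus\G^{(\pm(j+1))}$, all modules one-dimensional); this is exactly the computation of Proposition \ref{prop:rels}(ii) run on a linear chain. The idempotency $(\b_{\tilde n}^{\pm})^2 = \b_{\tilde n}^{\pm}$ of (f$_\C$) is Proposition \ref{prop:fnu}(i), and the matrix-unit identities in (g$_\C$) among $\b_+^+ = \b_{\tilde n}^+$, $\b_-^- = \b_{\tilde n}^-$, $\b_+^-$, $\b_-^+$ are immediate from their identification in Examples \ref{ProjectionExample} with the matrix units $\mathsf{E_{p,q}}$, $\mathsf p,\mathsf q \in \{-\underline{\mathsf 1},\underline{\mathsf 1}\}$. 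For the remaining orthogonalities in (f$_\C$) and (g$_\C$) I would drop down to the explicit module structure: since $\C_n$ is abelian, each $\VV^{\ot j}$ is a sum of one-dimensional weight spaces $\CC\v_{\mathsf r}$, and an induction along the arm shows that $\b_j^+$ (placed in any $\Z_k$ with $k \ge j$) has image $\CC\v_{\underline{\mathsf 1}}$ and kernel $\bigoplus_{\mathsf r \ne \underline{\mathsf 1}}\CC\v_{\mathsf r}$, with the mirror statement for $\b_j^-$ and $-\underline{\mathsf 1}$; composing these projections with one another and with $\mathsf{E_{\pm \underline{\mathsf 1}, \mp \underline{\mathsf 1}}}$ then produces every stated vanishing. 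Throughout I would watch the degenerate cases: when $n$ is even the two arms meet at the interior node $n/2$, where $\G^{(n/2)}$ has multiplicity $2$ and $\b_{\tilde n}^{\pm}$ are the two rank-one idempotents of a single $2\times 2$ block; when $n$ is odd they meet back at $\G^{(0)}$ after $n$ steps; but the weight-space bookkeeping is uniform.

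For $\G = \D_n$, the graph $\mathsf{\hat{D}_{n+2}}$ is a chain with nodes $0,1,\ldots,n-1$, carrying a fork $\{0,0'\}$ attached at the branch node $1 = \br(\D_n)$ and a second fork $\{(n),(n')\}$ attached at node $n-1$. Along the interior of the chain the picture is as for $\TT,\OO,\II$: Proposition \ref{prop:fnu} applies at the degree-two nodes $(2),\dots,(n-2)$, so relation (c) of Proposition \ref{prop:rels} holds at the interior indices $1 \le j \le n-3$ (building $\b_2,\dots,\b_{n-2}$ from $\b_1$), and relations (a),(b),(d),(e) hold for $\b_1,\dots,\b_{n-2}$. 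The new feature is that node $n-1$ is itself a branch node, $\G^{(n-1)}\ot\VV = \G^{(n-2)}\oplus\G^{(n)}\oplus\G^{(n')}$, so $\b_{n-1} = \f_{(n)}$ and $\b' = \f_{(n')}$ are the minimal central idempotents \eqref{projectors} of the two fork tips rather than outputs of the recursion (for $n = 3$ the interior chain is empty, and even $\b_1 = \f_{(2)}$ then comes from \eqref{projectors}); the Jones-basic-construction argument from the proof of Proposition \ref{prop:fnu}(v) shows $\b_{n-2}\e_{n-1}\b_{n-2}$ projects onto $\G^{(n-2)}$, so $\b_{n-2} - \b_{n-2}\e_{n-1}\b_{n-2} = \b_{n-1} + \b'$, which is why relation (c) is available only at the interior indices. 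Relations (b),(d),(e) for $\b_{n-1}$ and the $\b'$-versions collected in (g$_\D$) and (h$_\D$) I would obtain from Lemma \ref{lem:fmu} applied with $\lambda = (n-1)$ and $\mu \in \{(n),(n')\}$ (using $d^{(n)} = d^{(n')} = 1$ and $d^{(n-1)} = 2$); $(\b')^2 = \b'$ and $\b_{n-1}\b' = 0 = \b'\b_{n-1}$ hold because $\f_{(n)}$ and $\f_{(n')}$ are orthogonal idempotents projecting onto non-isomorphic $\G$-modules. Finally, relation (f) of Proposition \ref{prop:rels}, $\b_{n-1} = 2\b_{n-1}\e_n\b_{n-1}$, and its partner $\b' = 2\b'\e_n\b'$ in (i$_\D$), follow since $\G^{(n)}\ot\VV$ and $\G^{(n')}\ot\VV$ each equal the irreducible $\G^{(n-1)}$, which is old at level $n+1$: by Lemma \ref{lem:fmu}(iv) each $2\f_\mu\e_n\f_\mu$ is an idempotent of $\Z_n\e_n\Z_n = \End_\G(\VV^{\ot(n+1)}_{\mathsf{old}})$, and since $\G^\mu\ot\VV$ has no new part it must equal $\f_\mu\ot{\bf 1}$ by Schur's lemma, the same reasoning used for $\p$ in Proposition \ref{prop:fnu}.

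The hard part will be the vanishing relations. In the $\D_n$ case they are comparatively easy, since the relevant modules are non-isomorphic and the idempotents are central inside the appropriate summand; the real care there is only in recognizing that the threefold splitting at node $n-1$ forces the endpoint idempotents to come from \eqref{projectors} (so that relation (c) cannot be claimed at $j = n-2$). In the $\C_n$ case, by contrast, a purely representation-theoretic argument is not available: since $\G^{(+i)} \cong \G^{(-j)}$ whenever $i + j \equiv 0 \modd n$, the idempotents $\b_i^+$ and $\b_j^-$ can project onto isomorphic modules and are not obviously orthogonal. Proving that their products, and the products with the off-diagonal matrix units, vanish forces one into the explicit weight-vector description of the $\b_j^{\pm}$ and a careful account of which one-dimensional weight space each of them fixes, both along the arm and, at the antipodal node, inside the $2\times 2$ block; that is the one place where the combinatorics of $\mathsf{\hat{A}_{n-1}}$ itself, rather than the general double-centralizer machinery, does the work.
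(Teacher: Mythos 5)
Your plan tracks the paper's: generation comes from Theorem~\ref{thm:JBC}, relations (a)--(e) from Lemma~\ref{lem:fmu} and Proposition~\ref{prop:fnu}, and an explicit weight-space computation handles the $\C_n$ vanishings, which you rightly flag as the place a purely structural argument stops working. One concrete claim in the $\C_n$ half is false as written, though, and should be repaired: you assert that ``$\b_j^+$ (placed in any $\Z_k$ with $k\ge j$) has image $\CC\v_{\underline{\mathsf 1}}$ and kernel $\bigoplus_{\mathsf r\ne\underline{\mathsf 1}}\CC\v_{\mathsf r}$.'' What is true, and what the paper's induction via relation (c) establishes, is that $\b_j^+ = \mathsf{E}_{\underline{\mathsf 1}_j,\underline{\mathsf 1}_j}$ \emph{as an element of $\Z_j$}; under the embedding $a\mapsto a\ot\mathbf 1$ it becomes $\mathsf{E}_{\underline{\mathsf 1}_j,\underline{\mathsf 1}_j}\ot\mathbf 1^{\ot(k-j)}\in\Z_k$, which has rank $2^{k-j}$, not $1$. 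The compositions in (f$_\C$) still vanish --- the range of $\b_j^-$ forces $\v_{-1}$ in slot $1$, which $\b_i^+$ annihilates for $i\ge 1$, and symmetrically --- but your image/kernel description would mislead the (g$_\C$) bookkeeping: with the correct formulas one finds $\b_j^{\epsilon}\b_\gamma^\zeta = \delta_{\epsilon,\zeta}\b_\gamma^\zeta$ and $\b_\gamma^\zeta\b_j^{\epsilon} = \delta_{\gamma,\epsilon}\b_\gamma^\zeta$, so the vanishing is governed by $\epsilon\ne\zeta$ and $\epsilon\ne\gamma$ respectively, not by the uniform condition $\zeta\ne\gamma$; verify this rather than transcribe it.

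For $\DD_n$ you genuinely diverge from the paper: the paper writes $\b_1,\dots,\b_{n-1},\b'$ explicitly as combinations of the matrix units $\mathsf{E}_{\pm\underline{\mathsf 1},\pm\underline{\mathsf 1}}$ and checks the relations by matrix-unit multiplication, whereas you derive (g$_\D$)--(i$_\D$) structurally from Lemma~\ref{lem:fmu}(i)/(iv), from orthogonality of the central idempotents $\f_{(n)},\f_{(n')}$ in the distinct isotypic components of $\VV^{\ot n}$, and from the Schur's-lemma argument already used for (f) of Proposition~\ref{prop:rels}. Both routes work, and yours avoids recomputation; your observation that (c) of Proposition~\ref{prop:rels} cannot hold at $j=n-2$ (since $\b_{n-2}-\b_{n-2}\e_{n-1}\b_{n-2}=\b_{n-1}+\b'$) is sharper than the paper's exposition and worth keeping. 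Apply the same scrutiny to (h$_\D$): Lemma~\ref{lem:fmu}(iv) with $\lambda=(n-1)$, $\mu=(n')$ gives $\e_n\b'\e_n=\tfrac12\f_{(n-1)}\e_n$, and under the statement's indexing $\b_j=\f_{(j+1)}$ this reads $\tfrac12\b_{n-2}\e_n$, not $\tfrac12\b_{n-1}\e_n$; your ``from Lemma~\ref{lem:fmu}'' derivation will surface this discrepancy between the proposition's indexing and its own formula, and you should reconcile it explicitly rather than reproduce the printed form.
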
  
 
 \begin{proof} 
 For $\G = \C_n$, the fact that $\e_1, \ldots, \e_{k-1}, \b_1^\pm, \ldots, \b_{\tilde n}^\pm,  \b_+^-, \b_-^+$ generate $\Z_k$ follows from Theorem \ref{thm:JBC} parts (a), (d), and (e).  Relations (a)-(e) hold as in  Proposition \ref{prop:rels}.     Using (c) of Proposition \ref{prop:rels} and induction,  it is straightforward to prove that $\b_{k}^{\pm} = \mathsf{E_{\pm\underline{\mathsf 1},\pm\underline{\mathsf 1}}} \in \Z_k$,  where $\underline{\mathsf 1}$ is the $k$-tuple of all $1$s.  The relations in   (f$_\C$) and  (g$_\C$) then follow by multiplication of matrix units.
 
 For $\G = \DD_n$,  the  fact that $\e_1, \ldots, \e_{k-1},  \b_1, \ldots, \b_{n-1},  \b'$ generate $\Z_k$ follows from Theorem \ref{thm:JBC} parts (a), (d), and (f). 
 In $\Z_2$,  the projection onto $\G^{(2)}$ is $\b_1 = \mathsf{E_{(1,1),(1,1)}} + \mathsf{E_{(-1,-1),(-1,-1)}}$.   It follows easily by induction that for $2 \le j  <  n-1$,
$\b_j = \b_{j-1} - \b_{j-1}  \e_{j} \b_{j-1} =  \mathsf{E_{\underline{\mathsf 1},\underline{\mathsf 1}}} +  \mathsf{E_{-\underline{\mathsf 1},-\underline{\mathsf 1}}}$  projects $\VV^{\otimes j}$ onto $\G^{(j)}$, where
$\underline{\mathsf 1}$ is the $j$-tuple of all $1$s.  When $j =  n-1$ we have $\b_{n-1}  \e_{n} \b_{n-1} =  \mathsf{E_{\underline{\mathsf 1},\underline{\mathsf 1}}} +  \mathsf{E_{-\underline{\mathsf 1},-\underline{\mathsf 1}}}$ is  the projection onto $\G^{(n)} \oplus \G^{(n')}$,  and this splits into projections $\b_{n-1} = \frac{1}{2}( \mathsf{E_{\underline{\mathsf 1},\underline{\mathsf 1}}} +  \mathsf{E_{-\underline{\mathsf 1},-\underline{\mathsf 1}}}  - 
 \mathsf{E_{\underline{\mathsf 1},-\underline{\mathsf 1}}} - \mathsf{E_{-\underline{\mathsf 1},\underline{\mathsf 1}}} )$ and $\b' = \frac{1}{2}( \mathsf{E_{\underline{\mathsf 1},\underline{\mathsf 1}}} +  \mathsf{E_{-\underline{\mathsf 1},-\underline{\mathsf 1}}}  + 
 \mathsf{E_{\underline{\mathsf 1},-\underline{\mathsf 1}}} + \mathsf{E_{-\underline{\mathsf 1},\underline{\mathsf 1}}} )$, which project onto $\G^{(n)}$ and $\G^{(n')}$, respectively.
The other relations follow by multiplication of matrix units. 
\end{proof}

\begin{remark}\label{rem:nopresent}  The results of this section identify a set of generators for
 each centralizer algebra and relations that they satisfy.  In \cite{GH},  these relations are
used to give a presentation on generators and relations for $\Z_k(\G)$. \end{remark} 
  \end{subsection}  
 \end{section}

\begin{section}{The Cyclic Subgroups} \end{section} 
Let  $\C_n$ denote the cyclic subgroup of $\mathsf{SU}_2$ generated by 
\[g = \left (\begin{array}{cc} \zeta^{-1}  & 0 \\ 0 & \zeta \end{array}\right ) \in \mathsf{SU}_2,\]
where $\zeta = \zeta_n$, a primitive $n$th root of unity.    
The irreducible  modules for  $ \C_n$ are all one-dimensional and are given by \   $\C_n^{(\ac)}= \CC\v_\ac$ \ for \  $\ac = 0,1, \dots, n-1$,  where
\ $g \v_\ac = \zeta^\ac \v_\ac$,  and \break  $\C_n^{(\ac)} \otimes \C_n^{(m)}  \cong \C_n^{(\ac+m)}$ (superscripts interpreted $\mathsf{mod}\, n$).
Thus,  we can assume that the labels for the irreducible $\C_n$-modules are $(\ell)$, where 
$\ell \in \Lambda(\C_n) = \{0,1,\dots,n-1\}$,  with the understanding that  $(j) = (\ac)$ whenever
an integer $j$ such that $j \equiv \ac \modd n$ occurs in some expression.    

The  natural $\C_n$-module $\VV$  
 of $2 \times 1$ column vectors which $\C_n$ acts on by matrix
 multiplication can be identified with the module  $\C_n^{(-1)}  \oplus \C_n^{(1)}$.   As before, 
we let   $\v_{-1} = (1,0)^{\mathsf t}$  
and $\v_1 =(0,1)^{\mathsf t}$. 
 
 \begin{subsection}{The centralizer algebra $\ZZ_k(\C_n)$} \end{subsection}  
 
Our aim in this section  is to understand the centralizer algebra $\ZZ_k(\C_n)$ of the
$\C_n$-action on $\VV^{\otimes k}$ and the representation theory of $\ZZ_k(\C_n)$.  As in \eqref{eq:branch2},  let     
\begin{equation}\label{eq:defnprime} \tilde n = \begin{cases} n & \quad \hbox{if} \ \ n \ \ \hbox{is odd}  \\
\half n & \quad \hbox{if} \ \ n \ \ \hbox{is even}. \end{cases}\end{equation} 
Assume $\mathsf{r} = (r_1, \dots, r_k) \in \{-1,1\}^k$,  and
set 
\begin{equation}\label{eq:sigvert} \mathsf{\vert r \vert } = \left | \left\{\,r_i \mid r_i = -1\,\right \}\right |. \end{equation}    
Corresponding to $\mathsf{r} \in \{-1,1\}^k$ is 
the vector $\v_{\mathsf{r}} = \v_{r_1} \otimes \cdots \otimes \v_{r_k} \in \VV^{\otimes k}$, and 
\begin{equation}\label{eq:gact} g \v_{\mathsf{r}} =  \zeta^{k-2\vert\mathsf{r}\vert} \v_{\mathsf{r}}.\end{equation}
For two such $k$-tuples $\mathsf{r}$ and $\mathsf{s}$,    
\begin{equation}\label{eq:modrel}  k-2 \mathsf{\vert r \vert}    
\equiv k-2\mathsf{ \vert s \vert}  \  \modd n \\ 
\iff  \mathsf{ \vert r \vert  \equiv  \vert s \vert} \ \modd \tilde n.\end{equation} 
 
 Recall that $\Lambda_k(\C_n)$ is the subset of $\Lambda(\C_n) = \{0,1,\dots, n-1\}$ of labels for the irreducible
 $\C_n$-modules occurring in $\VV^{\ot k}$.     Now if $\ell \in \Lambda(\C_n)$,  then  
\[\ac \in \Lambda_k(\C_n)   \iff   
k-2 \mathsf {\vert r\vert} \equiv \ell  \modd n \ \ \hbox{\rm for some} \ \ \mathsf{r} \in \{-1,1\}^k. \] Thus,
\begin{equation}\label{eq:lamkCn} \Lambda_k(\C_n) = \left \{\ell \in \Lambda(\C_n) \mid \ell \equiv  k-2a_\ell \modd n 
\ \ \hbox{\rm for some} \ \ a_\ell \in \{0,1,\dots, k\} \right \}.\end{equation}
\emph{We will always assume $a_\ell$ is the minimal value in $\{0,1,\dots, k\}$ with that property.}

In particular,  $k-\ell$ must be even when $n$ is even.
Hence there are at most $\tilde n$ distinct values in $\Lambda_k(\C_n)$.    When $k \geq \tilde n -1$, then for every 
 $a \in \{0,1,\dots, \tilde n-1\}$,  there exists an $\ell \in \Lambda_k(\C_n)$ so that   $k-2a \equiv \ell \modd n$, and there are exactly $\tilde n$ distinct values   in $\Lambda_k(\C_n)$.  \msk 

\begin{lemma}\label{L:centmatrix}  Assume  $\mathsf{r}$ and $\mathsf{s}$ are two $k$-tuples
satisfying  $\mathsf{\vert r \vert  \equiv \vert s \vert} \modd \tilde n.$
Let $\mathsf{E_{r,s}}$ be the transformation on 
$\VV^{\otimes k}$ defined by
\begin{equation} \mathsf{E_{r,s}} \v_{\mathsf{t}} = \delta_{\mathsf{s}, \mathsf{t}} \v_{\mathsf{r}}.\end{equation}
Then $ \mathsf{E_{r,s}}  \in \ZZ_k({\C_n})$.  \end{lemma}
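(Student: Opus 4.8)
The plan is to verify directly that $\mathsf{E_{r,s}}$ commutes with the action of the generator $g$ of $\C_n$ on $\VV^{\otimes k}$, which suffices since $g$ generates $\C_n$. Recall that $g$ acts diagonally on the basis of simple tensors: $g\v_{\mathsf t} = \zeta^{\,k - 2|\mathsf t|}\,\v_{\mathsf t}$ for every $\mathsf t \in \{-1,1\}^k$. So both $g$ and $\mathsf{E_{r,s}}$ are being compared as linear maps, and it is enough to check equality of $g\,\mathsf{E_{r,s}}$ and $\mathsf{E_{r,s}}\,g$ on each basis vector $\v_{\mathsf t}$.

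First I would compute the two composites on $\v_{\mathsf t}$. On one side, $\mathsf{E_{r,s}}(g\,\v_{\mathsf t}) = \zeta^{\,k-2|\mathsf t|}\,\mathsf{E_{r,s}}\v_{\mathsf t} = \zeta^{\,k-2|\mathsf t|}\,\delta_{\mathsf s,\mathsf t}\,\v_{\mathsf r}$. On the other side, $g(\mathsf{E_{r,s}}\v_{\mathsf t}) = \delta_{\mathsf s,\mathsf t}\, g\,\v_{\mathsf r} = \delta_{\mathsf s,\mathsf t}\,\zeta^{\,k-2|\mathsf r|}\,\v_{\mathsf r}$. These two expressions agree for all $\mathsf t$ precisely when $\zeta^{\,k-2|\mathsf s|} = \zeta^{\,k-2|\mathsf r|}$, i.e.\ when $k-2|\mathsf r| \equiv k-2|\mathsf s| \pmod n$ (the Kronecker delta kills every term with $\mathsf t \neq \mathsf s$, so only the value at $\mathsf t = \mathsf s$ matters). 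By the equivalence recorded in \eqref{eq:Sig}, this congruence mod $n$ is exactly equivalent to $|\mathsf r| \equiv |\mathsf s| \pmod{\tilde n}$, which is the hypothesis. Hence $g\,\mathsf{E_{r,s}} = \mathsf{E_{r,s}}\,g$, and therefore $\mathsf{E_{r,s}} \in \End_{\C_n}(\VV^{\otimes k}) = \ZZ_k(\C_n)$.

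There is essentially no obstacle here: the whole argument is a one-line eigenvalue bookkeeping, and the only substantive input is the congruence equivalence $k-2|\mathsf r| \equiv k-2|\mathsf s| \pmod n \iff |\mathsf r| \equiv |\mathsf s| \pmod{\tilde n}$, which is already established in \eqref{eq:Sig} and amounts to noting that $2$ is invertible mod $n$ when $n$ is odd and has order-two behavior when $n$ is even (so dividing the modulus by $2$). If one wanted to be maximally careful, the single point worth a sentence is that checking commutation with the single generator $g$ indeed implies commutation with all of $\C_n$, since $\C_n = \langle g\rangle$ and the action is by algebra homomorphism. I would present the proof in exactly this order: reduce to the generator $g$, write out the two composites on an arbitrary basis vector $\v_{\mathsf t}$, observe the $\delta_{\mathsf s,\mathsf t}$ collapses everything to the case $\mathsf t = \mathsf s$, and invoke \eqref{eq:Sig} to match the root-of-unity powers.
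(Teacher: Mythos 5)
Your proof is correct and matches the paper's argument step for step: both compute $g\,\mathsf{E_{r,s}}\v_{\mathsf t}$ and $\mathsf{E_{r,s}}\,g\,\v_{\mathsf t}$ on the simple-tensor basis, use the Kronecker delta to reduce to $\mathsf t = \mathsf s$, and invoke the equivalence $k-2|\mathsf r| \equiv k-2|\mathsf s| \pmod n \iff |\mathsf r| \equiv |\mathsf s| \pmod{\tilde n}$ from \eqref{eq:Sig}. The only difference is cosmetic: you explicitly flag the reduction to the single generator $g$, which the paper leaves implicit.
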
 

\begin{proof}  Note that by \eqref{eq:gact}, $g \mathsf{E_{r,s}} \v_{\mathsf{t}}  
 =  \delta_{\mathsf{s}, \mathsf{t}}  \zeta^{k-2  \mathsf{\vert r \vert}} \v_{\mathsf{r}}$, 
while $\mathsf{E_{r,s}}g \v_{\mathsf{t}}  = \zeta^{k-2\mathsf{\vert t \vert}}  \delta_{\mathsf{s}, \mathsf{t}} \v_{\mathsf r} =  \zeta^{k-2\mathsf{\vert s \vert}}  \delta_{\mathsf{s}, \mathsf{t}} \v_{\mathsf r}$.   Consequently,  $g  \mathsf{E_{r,s}}  = 
 \mathsf{E_{r,s}} g$ for all such tuples $\mathsf{r}, \mathsf{s}$,   and   
$ \mathsf{E_{r,s}}  \in \ZZ_k(\C_n)$  by \eqref{eq:modrel}.   \end{proof}  

\begin{thm}\label{T:centcycbasis}  \begin{itemize} \item[{\rm (a)}]  The set 
\begin{equation}\label{eq:zcycbasis} \mathcal B^k(\C_n) = \{\mathsf{E_{r,s}} \mid \mathsf{r,s} \in \{-1,1\}^k, \ \mathsf{\vert r \vert  \equiv \vert s \vert} \modd \tilde n\} \end{equation} 
  is a basis for the centralizer algebra $\Z_k({\C_n}) = \mathsf{End}_{\C_n}(\VV^{\otimes k})$. 
\item[{\rm (b)}]  The set  $\{z_\ell \mid \ell \in \Lambda_k(\C_n)\}$ is a basis for the 
center of $\Z_k(\C_n)$, where  
\begin{equation} \label{eq:cencycbasis}  z_\ell = \sum_{{\mathsf{r} \in \{-1,1\}^k} \atop
{k-2\mathsf{\vert r\vert} \equiv \ell \modd n}}  \mathsf{E_{r,r}}   \qquad \hbox{\rm for} \ \  \ell \in \Lambda_k(\C_n). \end{equation} 
\item[{\rm (c)}] If $n = 2\tilde n$ and $\tilde n$ is odd,  then $\Z_k({\C_n}) \cong \Z_k(\C_{\tilde n})$.
\item[{\rm (d)}]  The dimension of  the centralizer algebra   $\ZZ_k(\C_n)$ is the
coefficient of $z^k$ in \newline  $(1+z)^{2k} \big |_{z^{\tilde n} = 1}$;  hence,  it  is given by 
\begin{equation}\label{eq:cyccentdim} \dimm \Z_k(\C_n) =  \sum_{{0 \leq a,b\leq k} \atop {a \equiv b \modd \tilde n}}  {k \choose a}{k \choose b}. \end{equation}  
 \item[{\rm (e)}]   The number of walks of $2k$ steps from $0$ to $0$ on a circular graph with $n$ nodes 
(i.e., on the affine Dynkin diagram of type $\hat{\mathsf A}_{n-1}$) is 
$\displaystyle{\sum_{{0 \leq a,b\leq k} \atop {a \equiv b \modd \tilde n}}  {k \choose a}{k \choose b}.}$
\end{itemize}     \end{thm}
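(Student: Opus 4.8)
The plan is to establish the four parts in order, with part (a) as the foundation. For part (a), I would first invoke Lemma~\ref{L:centmatrix} to see that each $\mathsf{E_{r,s}}$ with $\mathsf{\vert r\vert\equiv\vert s\vert}\bmod\tilde n$ does lie in $\Z_k(\C_n)$, so $\mathcal B^k(\C_n)$ is a linearly independent subset (these are distinct standard matrix units in $\End(\VV^{\ot k})$). It then remains to show it spans $\Z_k(\C_n)$. For this, write an arbitrary $a\in\Z_k(\C_n)$ as $a=\sum_{\mathsf r,\mathsf s} a^{\mathsf r}_{\mathsf s}\mathsf E_{\mathsf r,\mathsf s}$ and compute that the condition $ga=ag$ forces $\zeta^{k-2|\mathsf r|}a^{\mathsf r}_{\mathsf s}=\zeta^{k-2|\mathsf s|}a^{\mathsf r}_{\mathsf s}$ for all $\mathsf r,\mathsf s$; hence $a^{\mathsf r}_{\mathsf s}=0$ unless $k-2|\mathsf r|\equiv k-2|\mathsf s|\bmod n$, which by~\eqref{eq:Sig} is exactly $|\mathsf r|\equiv|\mathsf s|\bmod\tilde n$. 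So every element of $\Z_k(\C_n)$ is a linear combination of the $\mathsf E_{\mathsf r,\mathsf s}$ in $\mathcal B^k(\C_n)$.

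For part (b), I would identify the center using the block structure implicit in (a). The relation $\mathsf E_{\mathsf r,\mathsf s}\mathsf E_{\mathsf u,\mathsf v}=\delta_{\mathsf s,\mathsf u}\mathsf E_{\mathsf r,\mathsf v}$ shows that the sets $B_\ell=\{\mathsf r:k-2|\mathsf r|\equiv\ell\bmod n\}$, as $\ell$ ranges over $\Lambda_k(\C_n)$, partition $\{-1,1\}^k$, and $\Z_k(\C_n)\cong\bigoplus_{\ell\in\Lambda_k(\C_n)}\mathrm{Mat}_{|B_\ell|}(\CC)$ as algebras (each block spanned by the $\mathsf E_{\mathsf r,\mathsf s}$ with $\mathsf r,\mathsf s\in B_\ell$). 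The center of a direct sum of matrix algebras is spanned by the block identities, and the identity of the $\ell$-block is precisely $z_\ell=\sum_{\mathsf r\in B_\ell}\mathsf E_{\mathsf r,\mathsf r}$; so $\{z_\ell\mid\ell\in\Lambda_k(\C_n)\}$ is a basis for the center. For part (c), when $n=2\tilde n$ with $\tilde n$ odd, I would observe that $|\mathsf r|\bmod\tilde n$ determines the block in both $\Z_k(\C_n)$ and $\Z_k(\C_{\tilde n})$ (the congruence condition $|\mathsf r|\equiv|\mathsf s|\bmod\widetilde{\tilde n}$ for $\C_{\tilde n}$ uses $\widetilde{\tilde n}=\tilde n$ since $\tilde n$ is odd), so the two centralizer algebras have literally the same basis $\mathcal B^k$ and the same multiplication rule, hence are isomorphic.

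For part (d), I would count $\dim\Z_k(\C_n)=|\mathcal B^k(\C_n)|$ directly: it is the number of pairs $(\mathsf r,\mathsf s)\in\{-1,1\}^k\times\{-1,1\}^k$ with $|\mathsf r|\equiv|\mathsf s|\bmod\tilde n$. Since there are $\binom ka$ tuples $\mathsf r$ with $|\mathsf r|=a$, grouping by the values $a=|\mathsf r|$, $b=|\mathsf s|$ gives $\sum_{a\equiv b\,(\tilde n)}\binom ka\binom kb$, which is~\eqref{eq:cyccentdim}. To get the generating-function description, note $(1+z)^{2k}=(1+z)^k(1+z)^k=\sum_{a,b}\binom ka\binom kb z^{a+b}$; reducing exponents modulo $\tilde n$ (the substitution $z^{\tilde n}=1$) collects the coefficient of $z^{k}$ to be $\sum_{a+b\equiv k\,(\tilde n)}\binom ka\binom kb$, and since $0\le a,b\le k$ one has $a+b\equiv k\bmod\tilde n\iff a\equiv k-b\equiv -(b-k)\equiv\cdots$; more cleanly, because $0\le a\le k$ and $0\le b\le k$, the congruence $a+b\equiv k\bmod\tilde n$ is equivalent to $a\equiv b\bmod\tilde n$ only after the reflection $b\mapsto k-b$, so I would instead observe that reducing $(1+z)^{2k}$ mod $z^{\tilde n}-1$ and reading off \emph{any} fixed coefficient gives the same count up to relabeling; the honest statement to prove is just that the coefficient of $z^k$ in $(1+z)^{2k}$ reduced mod $z^{\tilde n}-1$ equals $\sum_{a\equiv b\,(\tilde n)}\binom ka\binom kb$, which follows by writing $(1+z)^{2k}\bmod(z^{\tilde n}-1)=\big((1+z)^k\bmod(z^{\tilde n}-1)\big)\cdot\big((1+z)^k\bmod(z^{\tilde n}-1)\big)$ and multiplying out. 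The main obstacle I anticipate is purely bookkeeping: getting the modular-reduction identity in (d) stated with the correct index relation (matching the $z^k$ coefficient to $a\equiv b$ rather than $a+b\equiv k$), which is a small reflection-of-summation-index argument using that $\binom kb=\binom k{k-b}$, so that $\sum_{a+b\equiv k}\binom ka\binom kb=\sum_{a+(k-b')\equiv k}\binom ka\binom k{b'}=\sum_{a\equiv b'}\binom ka\binom k{b'}$; everything else is routine once part (a) is in hand.
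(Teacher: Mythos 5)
Your proof is correct and follows essentially the same route as the paper: commutation with $g$ forces the $\bmod\ \tilde n$ congruence to give (a), the center is read off the visible block (matrix-algebra direct sum) structure to give (b), the case $\tilde n$ odd gives identical bases for (c), and (d) is a direct count followed by the index reflection $\binom kb=\binom k{k-b}$ converting $a+b\equiv k$ into $a\equiv b$. One small caution in your part (d): the parenthetical suggestion that ``reading off \emph{any} fixed coefficient gives the same count up to relabeling'' is false (the coefficients of $(1+z)^{2k}\bmod(z^{\tilde n}-1)$ are generally distinct — e.g.\ for $k=6$, $\tilde n=4$ they are $992,1024,1056,1024$), but you abandon that idea and the final reflection argument you give is the right one and matches the paper's.
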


\begin{remark}  The notation $(1+z)^{2k} \big |_{z^{\tilde n} = 1}$ used in the statement of this
result can be regarded as saying  consider $(1+z)^{2k}$ in  the polynomial algebra $\mathbb C[z]$ modulo
the ideal generated by $z^{\tilde n}-1$ where $\tilde n$ is as in \eqref{eq:defnprime}.   \end{remark}   

\begin{proof}  (a)  For  
$X \in \mathsf{End}(\VV^{\otimes k})$,  suppose that 
\ $X \v_{\mathsf{s}}  = \sum_{\mathsf{r}}  X_{\mathsf {r,s}} \v_{\mathsf{r}} $ \ 
for scalars $X_{\mathsf {r,s}} \in \mathbb C$, where $\mathsf{r}$ ranges over all the
$k$-tuples in $\{-1,1\}^k$.  Then  $X \in \Z_k({\C_n})$ if and only if  $g^{-1}Xg = X$ if and only if  
\[g^{-1}Xg \v_{\mathsf{s}}   =  \sum_{\mathsf{r}} \zeta^{(k-2\mathsf{\vert s\vert}) -(k-2\mathsf{\vert r\vert})} X_{\mathsf {r,s}}  \v_{\mathsf{r}} =
 \sum_{\mathsf{r}} X_{\mathsf {r,s}}  \v_{\mathsf{r}}.\]
Hence,  for all $\mathsf {r,s}$, with $X_{\mathsf {r, s}} \neq 0$,  it must be that $\zeta^{2(\mathsf{\vert r \vert - \vert s \vert})} = 1$; that is,
$\mathsf{\vert r \vert \equiv\vert s \vert} \modd \tilde n$ by \eqref{eq:modrel}.   Thus,  $X = \sum_{\vert \mathsf{r}\vert \equiv \vert\mathsf{s}\vert \modd \tilde n} 
X_{\mathsf {r,s}} \mathsf{E_{r,s}}$, and the transformations $\mathsf{E_{r,s}}$ with 
$\mathsf{\vert r \vert \equiv \vert s \vert}  \modd \tilde n$ span $ \Z_k({\C_n})$.  It is easy to see that the $\mathsf{E_{r,s}}$
multiply like matrix units and 
are linearly independent, so they form a basis of $ \Z_k({\C_n})$.

(b) For each $\ell \in \Lambda_k(\C_n)$, the basis elements $\mathsf{E_{r,s}}$  with $\mathsf{\vert r \vert = \vert s \vert} \equiv
\half(k-\ell) \modd \tilde n$ form a matrix algebra, whose center 
 is $\CC z_\ell$ where $z_\ell$ is as in \eqref{eq:cencycbasis}.
Since $\Z_k(\C_n)$ is the direct sum of these matrix algebra ideals
as $\ell$ ranges over $\Lambda_k(\C_n)$, the result follows.  

When $n = 2\tilde n$ and $\tilde n$ is odd,  the elements $\{\mathsf{E_{r,s}}
\mid \mathsf{\vert r \vert \equiv \vert s \vert } \modd \tilde n\}$  comprise a basis of both $\Z_k({\C_n})$ and $\Z_k(\C_{\tilde n})$ to give the assertion in (c).  

(d)  It follows from part (a)  that   
\[\dimm\Z_k(\C_n) = \sum_{{0 \leq a,b\leq k} \atop {a \equiv b \modd \tilde n}}  {k \choose a}{k \choose b} \ = \ 
 \sum_{{0 \leq a,b\leq k} \atop {a \equiv b \modd \tilde n}} {k \choose a}{k \choose k-b}.\]
Since $a+k-b \equiv k \modd \tilde n$,    this expression is the coefficient of
 $z^k$  in  $(1+z)^k(1+z)^k \big |_{z^{\tilde n} = 1} = (1+z)^{2k}\big |_{z^{\tilde n} = 1}$,
 as claimed. 
 
(e)  This is an immediate consequence of \eqref{eq:evid}.\end{proof}   

 \begin{example}\label{ex:cyc}  Suppose $n = 8$ (so $\tilde n = 4$) and $k = 6$.    Then 
\begin{eqnarray*}
&&\big\vert \{ ( \mathsf {r},\mathsf{s}) \mid \mathsf{\vert r\vert \equiv\vert s \vert} \equiv 0 \modd 4\} \big\vert=  {6 \choose 0}^2 + {6 \choose 4}^2  + 2 {6 \choose 0}{6 \choose 4}   
= \  256 \\
&&\big\vert \{ ( \mathsf {r},\mathsf{s}) \mid\mathsf{\vert r\vert \equiv\vert s \vert}  \equiv 1 \modd 4\}\big\vert= {6 \choose 1}^2 + {6 \choose 5}^2 +  2 {6 \choose 1}{6 \choose 5}  
= \ 144 \\
&& \big\vert\{ ( \mathsf {r},\mathsf{s}) \mid \mathsf{\vert r\vert \equiv\vert s \vert} \equiv 2 \modd 4\}\big\vert= {6 \choose 2}^2 + {6 \choose 6}^2 +  2 {6 \choose 2}{6 \choose 6}  
= \  256 \\
&&\big\vert \{ ( \mathsf {r},\mathsf{s}) \mid \mathsf{\vert r\vert \equiv\vert s \vert} \equiv 3 \modd 4\}\big\vert  = {6 \choose 3}^2   = \  400.
\end{eqnarray*} 
Therefore $\dimm \ZZ_6(\C_8) = 1056.$     Now observe that when $k = 6$ and $n = 8$ that 
\begin{eqnarray*}
(1+z)^{2k} \big |_{z^{\tilde n} = 1}  &=&   (1+z)^{12} \big |_{z^4 = 1}  \\
&=& 1 + 12 z \, +\, 66 z^2 + 220 z^3  
+ 495 + 792z + 924 z^2 + 792 z^3 \\
&&\quad  +495 + 220z + 66 z^2 + 12z^3  +1. 
\end{eqnarray*} 
Since $k = 6  \equiv 2  \modd 4$,   by (c) of Theorem \ref{T:centcycbasis}  we have that  $\dimm \Z_6({\C_8})$ is
the coefficient of $z^2$ in this expression, so
 $\dimm  \ZZ_6({\C_8}) = 66 + 924 + 66 = 1056$, in agreement
 with the above calculation.  \end{example} 

\begin{remark}\label{rem:cycdiag}  The matrix units can be viewed diagrammatically.
For example,  if $k = 12$, $n = 6$, $\tilde n = 3$, and
$\mathsf{r} = (-1, -1, 1,-1,-1,1,1,1,1,1,1,-1)\in \{-1,1\}^{12}$,  then
 $\mathsf{\vert r \vert}= 5 \equiv 2 \modd 3$,
and if   $\mathsf{s} =
(1,-1,-1,-1,-1,-1,1,-1,-1,1,-1,1)$, then
$\vert \mathsf{s}\vert = 8
\equiv 2 \equiv \vert \mathsf{r}\vert \modd 3$.  In this case,  we  identify the matrix unit $\mathsf{E_{r,s}}$ with the diagram below
\[
\mathsf{E_{r,s}} =
\begin{array}{c}
 \begin{tikzpicture}[scale=.5,line width=1pt]
\foreach \i in {1,...,12}  { \path (\i,1) coordinate (T\i); \path (\i,-1)
coordinate (B\i); }
\filldraw[fill= black!10,draw=black!10,line width=4pt]  (T1) -- (T12) --
(B12) -- (B1) -- (T1);
\draw[black,line width = 2 pt,fill=black!60]   (T1) .. controls +(-90:0.5)
and +(-90:0.5) ..  (T2) .. controls +(-90:0.5) and +(-90:0.5) .. (T4) ..
controls +(-90:0.5) and +(-90:0.5) .. (T5)  .. controls +(-90:1.15) and
+(-90:1.15) .. (T12)
.. controls +(-45:0.5) and +(90:0.5) .. (B11)  .. controls +(90:0.5) and
+(90:0.5) ..  (B9) .. controls +(90:0.5) and +(90:0.5) .. (B8) .. controls
+(90:0.5) and +(90:0.5) .. (B6) .. controls +(90:0.5) and +(90:0.5) .. (B5)
.. controls +(90:0.5) and +(90:0.5) .. (B4) .. controls +(90:0.5) and
+(90:0.5) .. (B3) .. controls +(90:0.5) and +(90:0.5) .. (B2) .. controls
+(90:0.5) and +(-90:0.5) .. (T1);
\foreach \i in {1,...,12}  { \draw  (T\i)
 node[above=0.1cm]{$\scriptstyle{\i}$}; \draw  (B\i)
 node[below=0.1cm]{$\scriptstyle{\i'}$};  }
\foreach \i in {1,...,12}  { \fill (T\i) circle (6pt); \fill (B\i) circle
(6pt); }
\draw (T1) node[white] {\bf -}; \draw (T2) node[white] {\bf -}; \draw (T3)
node[white] {\bf +}; \draw (T4) node[white] {\bf -}; \draw (T5) node[white]
{\bf -}; \draw (T6) node[white] {\bf +}; \draw (T7) node[white] {\bf +};
\draw (T8) node[white] {\bf +};\draw (T9) node[white] {\bf +};\draw (T10)
node[white] {\bf +};\draw (T11) node[white] {\bf +};\draw (T12) node[white]
{\bf -};
\draw (B1) node[white] {\bf +};\draw (B2) node[white] {\bf -};\draw (B3)
node[white] {\bf -};\draw (B4) node[white] {\bf -};\draw (B5) node[white]
{\bf -};\draw (B6) node[white] {\bf -};
\draw (B7) node[white] {\bf +};\draw (B8) node[white] {\bf -}; \draw (B9)
node[white] {\bf -};\draw (B10) node[white] {\bf +};\draw (B11) node[white]
{\bf -};\draw (B12) node[white] {\bf +};
\end{tikzpicture}.
\end{array}
\]

Each such two-rowed $k$-diagram $d$ corresponds to two subsets,  $t(d)  \subseteq \{1, 2, \ldots, k\}$ and
$b(d) \subseteq \{1', 2', \ldots, k'\}$,  recording the positions of the $-1$s in the top and bottom rows of $d$, hence in $\mathsf{r}$ and $\mathsf{s}$ respectively,
and  $\vert t(d)\vert \equiv \vert b(d)\vert  \modd \tilde n$.   Under this correspondence,  diagrams
multiply as matrix units.  Thus, if  $d_1$ and $d_2$ are diagrams,  then
\[
d_1 d_2 = \delta_{b(d_1),t(d_2)} d_3
\]
where $d_3$ is the unique diagram given by $t(d_3) = t(d_1)$ and $b(d_3) =
b(d_2)$. For example, if $\tilde n = 3$, 
\[
\begin{array}{c}
 \begin{tikzpicture}[scale=.5,line width=1pt]
\foreach \i in {1,...,8}  { \path (\i,1) coordinate (T\i); \path (\i,-1)
coordinate (B\i); }
\filldraw[fill= black!10,draw=black!10,line width=4pt]  (T1) -- (T8) --
(B8) -- (B1) -- (T1);
\draw[black,line width = 2 pt,fill=black!60]   (T1) .. controls +(-90:0.5)
and +(-90:0.5) ..  (T2) .. controls +(-90:0.5) and +(-90:0.5) .. (T4) ..
controls +(-90:0.5) and +(-90:0.5) .. (T5)  .. controls +(-90:1.15) and
+(-90:1.15) .. (T8) -- (B8)   .. controls
+(90:0.5) and +(90:0.5) .. (B6)
.. controls +(90:0.5) and +(90:0.5) .. (B4) .. controls +(90:0.5) and
+(90:0.5) .. (B3) .. controls +(90:0.5) and +(90:0.5) .. (B2) .. controls
+(90:0.5) and +(-90:0.5) .. (T1);
\foreach \i in {1,...,8}  { \fill (T\i) circle (6pt); \fill (B\i) circle
(6pt); }
\draw (T1) node[white] {\bf -}; \draw (T2) node[white] {\bf -}; \draw (T3)
node[white] {\bf +}; \draw (T4) node[white] {\bf -}; \draw (T5) node[white]
{\bf -}; \draw (T6) node[white] {\bf +}; \draw (T7) node[white] {\bf
+};\draw (T8) node[white] {\bf -};
\draw (B1) node[white] {\bf +};\draw (B2) node[white] {\bf -};\draw (B3)
node[white] {\bf -};\draw (B4) node[white] {\bf -};\draw (B5) node[white]
{\bf +};\draw (B6) node[white] {\bf -};
\draw (B7) node[white] {\bf +};\draw (B8) node[white] {\bf -};
\draw (-.25,0) node {$d_1=$};
\end{tikzpicture}  \\
 \begin{tikzpicture}[scale=.5,line width=1pt]
\foreach \i in {1,...,8}  { \path (\i,1) coordinate (T\i); \path (\i,-1)
coordinate (B\i); }
\filldraw[fill= black!10,draw=black!10,line width=4pt]  (T1) -- (T8) --
(B8) -- (B1) -- (T1);
\draw[black,line width = 2 pt,fill=black!60]
(B3) .. controls +(+90:0.5) and +(+90:0.5) .. (B6)  .. controls +(45:0.5)
and +(-90:0.5) .. (T8)   .. controls
+(-90:0.5) and +(-90:0.5) .. (T6)
.. controls +(-90:0.5) and +(-90:0.5) .. (T4) .. controls +(-90:0.5) and
+(-90:0.5) .. (T3) .. controls +(-90:0.5) and +(-90:0.5) .. (T2) .. controls
+(-90:0.5) and +(+90:0.5) .. (B3);
\foreach \i in {1,...,8}  { \fill (T\i) circle (6pt); \fill (B\i) circle
(6pt); }
\draw (B1) node[white] {\bf +}; \draw (B2) node[white] {\bf +}; \draw (B3)
node[white] {\bf -}; \draw (B4) node[white] {\bf +}; \draw (B5) node[white]
{\bf +}; \draw (B6) node[white] {\bf -}; \draw (B7) node[white] {\bf
+};\draw (B8) node[white] {\bf +};
\draw (T1) node[white] {\bf +};\draw (T2) node[white] {\bf -};\draw (T3)
node[white] {\bf -};\draw (T4) node[white] {\bf -};\draw (T5) node[white]
{\bf +};\draw (T6) node[white] {\bf -};
\draw (T7) node[white] {\bf +};\draw (T8) node[white] {\bf -};
\draw (-.25,0) node {$d_2=$};
\end{tikzpicture}
\end{array}
=
\begin{array}{c}
 \begin{tikzpicture}[scale=.5,line width=1pt]
\foreach \i in {1,...,8}  { \path (\i,1) coordinate (T\i); \path (\i,-1)
coordinate (B\i); }
\filldraw[fill= black!10,draw=black!10,line width=4pt]  (T1) -- (T8) --
(B8) -- (B1) -- (T1);
\draw[black,line width = 2 pt,fill=black!60]
(T1) .. controls +(-90:0.5)
and +(-90:0.5) ..  (T2) .. controls +(-90:0.5) and +(-90:0.5) .. (T4) ..
controls +(-90:0.5) and +(-90:0.5) .. (T5)  .. controls +(-90:1.15) and
+(-120:1.15) .. (T8).. controls +(-90:0.5) and +(45:0.5) .. (B6)   ..
controls
+(90:0.5) and +(90:0.5) ..  (B3)  .. controls
+(90:0.5) and +(-90:0.5) .. (T1);
\foreach \i in {1,...,8}  { \fill (T\i) circle (6pt); \fill (B\i) circle
(6pt); }
\draw (T1) node[white] {\bf -}; \draw (T2) node[white] {\bf -}; \draw (T3)
node[white] {\bf +}; \draw (T4) node[white] {\bf -}; \draw (T5) node[white]
{\bf -}; \draw (T6) node[white] {\bf +}; \draw (T7) node[white] {\bf
+};\draw (T8) node[white] {\bf -};
\draw (B1) node[white] {\bf +}; \draw (B2) node[white] {\bf +}; \draw (B3)
node[white] {\bf -}; \draw (B4) node[white] {\bf +}; \draw (B5) node[white]
{\bf +}; \draw (B6) node[white] {\bf -}; \draw (B7) node[white] {\bf
+};\draw (B8) node[white] {\bf +};
\draw (9.25,0) node {$=d_3$.};
\end{tikzpicture}
\end{array}
\]
 \end{remark}   
\begin{subsection}{Irreducible modules for $\ZZ_k({\C_n})$} \end{subsection} 

 For $\ell \in \Lambda_k(\C_n)$,  set 
\begin{equation}\label{eq:Zka} \Z_k^{(\ell)} :  = 
 \spann_\CC \{\v_{\mathsf{r}} \in \VV^{\otimes k} \ \big | \  k-2 \mathsf{\vert r \vert}  \equiv  
\ell  \modd n \} =   \spann_\CC \{\v_{\mathsf{r}} \in \VV^{\otimes k} \ \big | \  \mathsf{\vert r \vert}  \equiv  
a_\ell  \modd \tilde n \}, \end{equation}
where $a_\ell$ is as in \eqref{eq:lamkCn}.  
{\it  When we need to emphasize that we are working with the group $\C_n$,  we will write this as $\Z_k(\C_n)^{(\ac)}$.} 
Now the generator $g$ of $\C_n$ acts as the scalar $\zeta^{\ell}$ on $\Z_k^{(\ac)}$,  and these scalars are distinct for different values of $\ell
\in \{0,1,\dots, n-1\}$.    Therefore, 
\[\VV^{\otimes k}  = \bigoplus_{\ell \in \Lambda_k(\C_n)}  \Z_k^{(\ell)}\]
is a decomposition of $\VV^{\ot k}$  into $\C_n$-modules.   
 
The mappings $ \mathsf{E_{r,s}}$ with $\mathsf{r,s} \in \{-1,1\}^k$ and  $\mathsf{\vert r\vert \equiv\vert s \vert} \equiv a_\ell  \modd \tilde n$ act as 
matrix units on  $\Z_k^{(\ac)}$ and act trivially on $ \Z_k^{(m)}$ for  $m \in\Lambda_k(\C_n), \ m \neq \ac$.   In addition,   
\[\mathsf{span}\{ \mathsf{E_{r,s}} \mid \mathsf{\vert r\vert \equiv\vert s \vert} \equiv a_\ell  \, \modd \tilde n\} = \End(\Z_k^{(\ell)})
= \End_{\C_n}(\Z_k^{(\ell)}).\]    As a consequence,  we have
that the spaces $\Z_k^{(\ell)}$   are modules for $ \Z_k({\C_n})$.  Since they are also invariant under the action of
$\C_n$,  they are modules both  for $\C_n$ and for  $\Z_k(\C_n)$.   It is apparent that $\Z_k^{(\ell)}$  is irreducible as a $\Z_k(\C_n)$-module from the fact that the natural module for a matrix algebra is its unique irreducible module.   \msk

\begin{examples}  For any $m \geq 1$, let $\zeta_m$ be a primitive $m$th root of unity.  Assume $k = 5$ and $n = 12$, so $\tilde n = 6$.   Then $\Z_5(\C_{12})$ has
6 irreducible modules  $\Z_5(\C_{12})^{(\ell)}$ for  $\ell= 1,3,5,7,9,11$.   On them,  the generator
$g$ of $\C_{12}$ acts by the scalars $ \zeta_{12}, \,\zeta_{12}^{3},\,\zeta_{12}^{5}, \zeta_{12}^7, \,\zeta_{12}^9,\,\zeta_{12}^{11}$, respectively.   

The  algebra  $\Z_5(\C_6)$ has 3 irreducible modules $\Z_5(\C_{6})^{(\ell)}$ for $\ell =1,3,5$,  on
which the generator $g'$ of $\C_{6}$ acts by the scalars  $\zeta_6^1,\, \zeta_6^3,\, \zeta_6^5$, respectively.

The algebra  $\Z_5(\C_3)$ also has 3 irreducible modules $\Z_5(\C_{3})^{(\ell)}$ for $\ac = 0,1,2$,   on
which the generator $g''$ of $\C_{3}$ acts by the scalars  $1,\, \zeta_3,\zeta_3^2$, respectively .     

The vectors  $\{\v_{\mathsf r} \mid \mathsf{r} \in \{-1,1\}^5,  \  k-2\mathsf{\vert r \vert }\equiv  \ell \modd 3\}$
form a basis for  $\Z_5(\C_{6})^{(1)}$ and $\Z_5(\C_{3})^{(1)}$ when $\ell \equiv 1 \modd 3$;   for 
$\Z_5(\C_{6})^{(3)}$ and $\Z_5(\C_{3})^{(0)}$ when $\ell  \equiv 0 \modd 3$;  and for 
$\Z_5(\C_{6})^{(5)}$ and $\Z_5(\C_{3})^{(2)}$ when $\ell  \equiv  2 \modd 3$. 
\end{examples}

The next result gives an expression for the dimension of the module $\Z_k^{(\ell)} = \Z_k(\C_n)^{(\ell)}$.   

\begin{thm} \label{thm:dims}  With $\tilde n$ as in \eqref{eq:defnprime},  suppose $\ell \in \Lambda_k(\C_n)$ and
$k-2a_\ell \equiv \ell \modd n$ as in \eqref{eq:lamkCn}.
Then  $\Z_k^{(\ell)} =  \spann_\CC \{\v_{\mathsf{r}} \in \VV^{\otimes k} \ \big | \  \mathsf{\vert r \vert}  \equiv  
a_\ell \modd \tilde n \}$  is an irreducible $\Z_k(\C_n)$-module and the following hold:
\begin{itemize}
\item[{\rm (i)}] $\dimm \Z_k^{(\ell)}  = \displaystyle{ \sum_{{0 \leq b  \leq k} \atop {b \equiv a_\ell \modd \tilde n }
}  {k  \choose  b}},$ which is the coefficient of
$z^{a_\ell}$ in   $(1+z)^{k}\big |_{z^{\tilde n} = 1}$
\item[{\rm (ii)}]   As a bimodule for $ \C_n \times \Z_k({\C_n})$
\begin{equation}\label{eq:cycbimod} \VV^{\otimes k} \cong  \bigoplus_{\ac  \in \Lambda_k(\C_n)}  \left(\C^{(\ac)}_n \otimes  \Z_k^{(\ac)}\right). \end{equation} {\rm (}Here $ \C_n$ acts only on the factors $\C^{(\ac)}_n$  and $ \Z_k({\C_n})$ on the factors $  \Z_k^{(\ac)}$. 
Since $\Z_k^{(\ac)}$
is also both a $\C_n$ and a $\Z_k(\C_n)$-module and the actions commute,  the decomposition 
$\VV^{\ot k} = \bigoplus_{\ac \in \Lambda_k(\C_n)}  \Z_k^{(\ac)}$ is also a   $\C_n \times \Z_k({\C_n})$-bimodule
decomposition.{\rm )}
\item[{\rm (iii)}]   $\C^{(\ac)}_n$ occurs as a summand in the $\C_n$-module
$\VV^{\otimes k}$  with multiplicity  
\[m^{(\ell)}_k =  
 \displaystyle{ \sum_{  {0 \leq b \leq k} \atop {b\equiv a_\ell \modd \tilde n}}  {k  \choose  b} = \dimm \Z_k^{(\ac)}}.\]
\item[{\rm (iv)}]   The   number of walks on the
affine Dynkin diagram of type $\mathsf{\hat {A}}_{n\text{-}1}$ starting at node 0
and ending at node $\ell$ and taking $k$ steps is the coefficient of
$z^{a_\ell}$ in $(1+z)^{k}\big |_{z^{\tilde n} = 1}$, which is  $m^{(\ell)}_k  =  \dimm \Z_k^{(\ac)}.$
\end{itemize}
\end{thm}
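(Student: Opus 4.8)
The irreducibility of $\Z_k^{(\ell)}$ as a $\Z_k(\C_n)$-module has already been recorded in the paragraph preceding the statement, so the plan is to derive (i)--(iv) from Theorem~\ref{T:centcycbasis}, the explicit description of the modules in \eqref{eq:Zka}, and the general double-centralizer facts recalled earlier in Section~1, together with one short generating-function manipulation.

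I would begin with (i). First note that the minimal exponent $a_\ell$ of \eqref{eq:lamkCn} satisfies $0\le a_\ell<\tilde n$: since $2\tilde n\equiv 0\pmod n$ in both the odd and even cases of \eqref{eq:defnprime}, the congruence $k-2a\equiv\ell\pmod n$ is unchanged under $a\mapsto a-\tilde n$, so $a_\ell\ge\tilde n$ would contradict minimality. By \eqref{eq:Zka}, $\dimm\Z_k^{(\ell)}$ is the number of $\mathsf r\in\{-1,1\}^k$ with $|\mathsf r|\equiv a_\ell\modd\tilde n$; sorting these by the value $b=|\mathsf r|\in\{0,1,\dots,k\}$, each of which is realized by exactly $\binom{k}{b}$ tuples, gives $\dimm\Z_k^{(\ell)}=\sum_{b\equiv a_\ell\modd\tilde n}\binom{k}{b}$. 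Since $(1+z)^k=\sum_b\binom{k}{b}z^b$ and reduction modulo $z^{\tilde n}-1$ sends every monomial $z^b$ to $z^{\,b\bmod\tilde n}$, the bound $0\le a_\ell<\tilde n$ shows this last sum is precisely the coefficient of $z^{a_\ell}$ in $(1+z)^k\big|_{z^{\tilde n}=1}$.

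For (ii) and (iii) I would identify $\Z_k^{(\ell)}$ with the $\C_n^{(\ell)}$-isotypic component of $\VV^{\ot k}$. The generator $g$ acts on $\v_{\mathsf r}$ by the scalar $\zeta^{\,k-2|\mathsf r|}$, so $\Z_k^{(\ell)}$ is exactly the $\zeta^{\ell}$-eigenspace of $g$, and these eigenvalues are pairwise distinct for $\ell\in\{0,\dots,n-1\}$. Hence $\VV^{\ot k}=\bigoplus_{\ell\in\Lambda_k(\C_n)}\Z_k^{(\ell)}$ is the decomposition into $\C_n$-isotypic components; because $\dimm\C_n^{(\ell)}=1$, the $\C_n^{(\ell)}$-component is, as a $(\C_n,\Z_k(\C_n))$-bimodule, $\C_n^{(\ell)}\ot\Z_k^{(\ell)}$ with $\Z_k^{(\ell)}$ the (irreducible) multiplicity space, which is \eqref{eq:cycbimod}. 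Equivalently, one specializes the general bimodule decomposition $\VV^{\ot k}\cong\bigoplus_\lambda(\G^\lambda\ot\Z_k^\lambda)$ to $\G=\C_n$, having already checked that the $\Z_k^{(\ell)}$ are mutually non-isomorphic irreducible $\Z_k(\C_n)$-modules. The multiplicity $m_k^{(\ell)}$ of $\C_n^{(\ell)}$ in $\VV^{\ot k}$ then equals $\dimm\Z_k^{(\ell)}$, either from the double-centralizer equality $m_k^\lambda=\dimm\Z_k^\lambda$ or directly by counting the vectors $\v_{\mathsf r}$ on which $g$ acts by $\zeta^\ell$; combining with (i) gives (iii).

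Finally, (iv) follows by combining (iii) with the general identity $m_k^\lambda=|\mathcal{W}_k^\lambda(\G)|$ recorded in Section~1 and the fact that $\mathcal R_\VV(\C_n)$ is the affine Dynkin diagram $\mathsf{\hat{A}_{n\text{-}1}}$: the number of length-$k$ walks on $\mathsf{\hat{A}_{n\text{-}1}}$ from node $0$ to node $\ell$ equals $m_k^{(\ell)}=\dimm\Z_k^{(\ell)}$, i.e.\ the coefficient of $z^{a_\ell}$ in $(1+z)^k\big|_{z^{\tilde n}=1}$. One may also see this directly: such a walk is a sequence of $k$ unit steps around the $n$-cycle, ending at node $k-2a\bmod n$ when $a$ of the steps go ``down'', and there are $\binom{k}{a}$ such sequences for each $a$, summed over $a\equiv a_\ell\modd\tilde n$. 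There is no substantive obstacle here; the only points requiring care are bookkeeping ones, namely pinning down the normalization $0\le a_\ell<\tilde n$ so that the phrase ``coefficient of $z^{a_\ell}$ in $(1+z)^k\big|_{z^{\tilde n}=1}$'' is unambiguous, and matching the concretely defined eigenspaces $\Z_k^{(\ell)}$ with the abstract irreducible centralizer modules supplied by the Schur--Weyl formalism.
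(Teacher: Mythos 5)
Your proposal is correct and follows essentially the same route as the paper: count the basis vectors $\v_{\mathsf r}$ of $\Z_k^{(\ell)}$ by the value $b=|\mathsf r|$ to get the binomial sum and the generating-function interpretation, observe that $g$ acts on each such $\v_{\mathsf r}$ by $\zeta^{\ell}$ so that $\Z_k^{(\ell)}$ is the $\C_n^{(\ell)}$-isotypic component, and read off the bimodule decomposition, multiplicities, and walk counts from that. The paper compresses (ii)--(iv) into ``the other statements are apparent from these''; your write-up simply supplies the details, and your observation that minimality forces $0\le a_\ell<\tilde n$ (so that ``coefficient of $z^{a_\ell}$'' is well-defined) is a sensible bookkeeping point the paper leaves implicit.
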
 

\begin{proof}  Part (i) follows readily from the fact that a basis for 
$\Z_k^{(\ell)}$  consists of the vectors $\v_{\mathsf r}$ labeled by the
 tuples $\mathsf{r} \in \{-1,1\}^k$ with  $\mathsf{\vert r \vert } = b \equiv a_\ell \modd \tilde n$
(see \eqref{eq:Zka}), and the number
 of $k$-tuples $\mathsf r$ with $b$ components equal to $-1$  
 is ${k \choose b}$.      Each such vector $\v_\mathsf{r}$ satisfies 
 $g \v_{\mathsf r} = \zeta^{k-2\vert\mathsf{r}\vert} \v_{\mathsf r} = \zeta^\ell \v_{\mathsf r}$; 
 hence $\CC \v_{\mathsf r} \cong  \C_n^{(\ell)}$ as a $\C_n$-module.
 The other statements are apparent from these.     \end{proof}

 \begin{examples}\label{exs: smallk}  Consider the following special cases for $\C_n$, where
 $\tilde n$ is as in \eqref{eq:defnprime}.
 \begin{itemize}
 \item[{\rm (i)}] If  $k < \tilde n$,  then 
$\dimm  \Z_k({\C_n}) = \displaystyle{\sum_{a =0}^k   {k \choose a}^2 = {2k \choose k}.}$
 \item[{\rm (ii)}]
If $k = \tilde n$,  and $\ell \in \{0,1,\dots, n-1\}$ such that $k-2 \cdot 0 \equiv \ell \modd n$,  then $\ell = 0$ if $n$ is odd and $\ell = \tilde n = \half n$ if $n$ is even. For this value of $\ell$, we have  $\dimm \Z_{\tilde n}^{(\ell)}  = \displaystyle{{\tilde n \choose 0} + {\tilde n \choose \tilde n} = 2}$.
Thus,
$\displaystyle{\dimm  \Z_{\tilde n}({\C_n}) = \sum_{a =1}^{\tilde n-1}   {\tilde n \choose a}^2  + 2^2 =  {2\tilde n \choose \tilde n} + 2.}$
\end{itemize}  \end{examples} 

\begin{example} {Suppose $k = 6$ and $n =8$, so $\tilde n = 4$.  The
irreducible $\C_8$-modules  $\C_8^{(\ell)}$ occurring in $\VV^{\ot 6}$ have $\ell =0,2,4,6$,
and $a_\ell = 3,2,1,0$, respectively, where $k-\ell \equiv 2a_\ell  \modd 8$.  We have the following expressions for the number  $m_6^{(\ell)}$  of times  that
$\C_8^{(\ell)}$ occurs as a summand  of $\VV^{\ot 6}$.  

\[{\begin{array}[t]{|c|c|c|}
\hline 
\ell &a_\ell & \quad \displaystyle{ m_6^{(\ell)}}  \\
\hline  \hline
0 & \quad  3 &  \hspace{-.35truein}  \displaystyle{m_6^{(0)} = {6 \choose 3}  = 20 = \dimm  \Z_6^{(0)}} \\
\hline 
2 & \quad 2  & \displaystyle{\quad   m_6^{(2)} =  {6 \choose 2} + {6 \choose 6}   =  16 = \dimm \Z_6^{(2)}} \\
\hline
4 & \quad 1 &\displaystyle{ \quad m_6^{(4)}= {6 \choose 1} + {6 \choose 5}   = 12 = \dimm  \Z_6^{(4)}} \\
\hline 
6 &  \quad 0 & \displaystyle{\quad m_6^{(6)} = {6 \choose 0} + {6 \choose 4}   = 16 = \dimm  \Z_6^{(6)}} \\ 
\hline
\end{array}}
\]
 \noindent In particular
\[\dimm  \Z_6(\C_8) = \sum_{\ell \in  \Lambda_6(\C_8)}\left(\dimm  \Z_6^{(\ell)}\right)^2 = 
20^2+ 16^2 + 12^2 + 16^2 = 1056,\] exactly as in Example \ref{ex:cyc}. 

The number of walks on the Dynkin diagram of type $\mathsf{\hat {A}}_{7}$  with 
$6$ steps starting and ending at  $0$ is the coefficient of $z^3$ in 
\[ (1+z)^6 \big |_{z^{4} = 1}  = \  \quad  1 + 6 z \, +\, 15 z^2 + 20 z^3
+ 15 + 6 z \, + \, z^2,\]
which is 20.   The number of walks starting at $0$ and ending at $4$
is the coefficient of $z$ in this expression, which is 6+6 = 12.}
\end{example} 

\subsection{The cyclic subgroup $\C_{\infty}$}

Let  $\C_\infty$ denote the cyclic subgroup of $\mathsf{SU}_2$ generated by 
\[g = \left (\begin{array}{cc} \zeta^{-1}  & 0 \\ 0 & \zeta \end{array}\right ) \in \mathsf{SU}_2,\]
where $\zeta = e^{i\theta}$  for any  $\theta \in \mathbb R$  such that $\zeta$ is not a root of unity.    Then  $\C_\infty$ has a natural
action on  $\VV^{\ot k}$, and the   
irreducible  $\C_\infty$-modules occurring in the modules  $\VV^{\ot k}$
are all one-dimensional and are given by  $\C_\infty^{(\ac)}= \CC\v_\ac$ for some $\ac \in \mathbb Z$,  where
$g \v_\ac = \zeta^\ac \v_\ac$  and  $\C_\infty^{(\ac)} \otimes \C_\infty^{(m)}  \cong \C_\infty ^{(\ac+m)}$. 
In particular,  $\VV = \C_\infty^{(-1)}  \oplus \C_\infty^{(1)}$, and $\C_\infty^{(\ell)} \ot \VV = 
\C_{\infty}^{(\ell-1)} \oplus \C_{\infty}^{(\ell+1)}$ for all $\ell$.   Thus, the representation graph $\mathcal R_{\VV}(\C_\infty)$
is the Dynkin diagram ${\mathsf A}_{\infty}$. 

\begin{equation}\label{AinfinityDynkin}
\begin{array}{rll}
\C_\infty: & \begin{array}{c} \includegraphics[scale=.7,page=7]{mckay-diagrams.pdf}  \end{array} &  ({\mathsf A}_\infty) \\
\end{array}
\end{equation}

Now  $g \v_{\mathsf r}  =  \zeta^{k-2 \mathsf{\vert r \vert}} \v_{\mathsf r},$
for all ${\mathsf r} \in  \{-1,1\}^k$, where $\vert \mathsf r \vert$ is as
in \eqref{eq:sigvert}.  The arguments in the previous section can be easily adapted  to show the following.

\begin{thm}\label{thm:cycinf}  Let  $\Z_k = \Z_k(\C_{\infty})  = \End_{\C_{\infty}}(\VV^{\ot k})$.    Then
 \begin{itemize}  
 \item[{\rm (a)}] $\mathcal B^k(\C_{\infty}) = \left \{ \mathsf{E_{r,s}} \mid  \mathsf{r,s} \in \{-1,1\}^k,  \mathsf{\vert r \vert = \vert s \vert} \right \}$ is a basis for
 $\Z_k$, where $\mathsf{E_{r,s}}\v_{\mathsf t} = \delta_{\mathsf{s},\mathsf{t}} \v_{\mathsf r}$ and $\mathsf{E_{r,s}} \mathsf{E_{t,u}} = \delta_{\mathsf s, \mathsf t} \mathsf{E_{r,u}}$  for all $\mathsf{r,s,t,u} \in 
 \{-1,1\}^k$.
 \item[{\rm (b)}]  The irreducible modules for $\Z_k$ are labeled by $\Lambda_k(\C_{\infty}) =
 \{ k-2a \mid  a = 0,1,\dots, k\}$.  A  basis for the irreducible $\Z_k$-module $\Z_k^{(k-2a)}$ is 
 $\{\v_{\mathsf r} \mid {\mathsf r} \in \{-1,1\}^k, \  \mathsf{\vert r \vert} = a\},$
and  $\dimm \Z_k^{(k-2a)} = \displaystyle{{k \choose a}}$.  The module  $\Z_k^{(k-2a)}$  is also a $\C_{\infty}$-module,
 hence a $(\C_{\infty} \times \Z_k)$-bimodule. 
  \item[{\rm (c)}]  $\dimm \Z_k = \displaystyle{ \sum_{a=0}^k  {k \choose a}^2 =  {2k \choose k}} =
  \hbox{\rm coefficient of } \ z^k  \ \hbox{\rm in} \  (1+z)^{2k}$. 
  \item[{\rm (d)}]  $ \Z_k$ is isomorphic to the planar rook algebra $\mathsf {PR}_k$.
  \end{itemize}
  \end{thm}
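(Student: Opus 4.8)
The plan is to establish parts (a)--(c) exactly as in the finite cyclic case, simply replacing the congruence ``$\mathsf{\vert r\vert \equiv \vert s\vert} \modd \tilde n$'' by the equality $\mathsf{\vert r\vert} = \mathsf{\vert s\vert}$, and then to deduce part (d) by producing an explicit isomorphism with the planar rook algebra $\mathsf{P}_k$. For (a), I would mimic the proof of Theorem \ref{T:centcycbasis}(a): writing $X \in \End(\VV^{\ot k})$ as $X\v_{\mathsf s} = \sum_{\mathsf r} X_{\mathsf{r,s}}\v_{\mathsf r}$, the relation $g^{-1}Xg = X$ forces $\zeta^{2(\mathsf{\vert r\vert} - \mathsf{\vert s\vert})} = 1$ whenever $X_{\mathsf{r,s}} \neq 0$; since $\zeta$ is \emph{not} a root of unity, this is equivalent to $\mathsf{\vert r\vert} = \mathsf{\vert s\vert}$. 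The transformations $\mathsf{E_{r,s}}$ with $\mathsf{\vert r\vert} = \mathsf{\vert s\vert}$ multiply like matrix units, are linearly independent, and hence form a basis. For (b), the spaces $\Z_k^{(k-2a)} = \spann_\CC\{\v_{\mathsf r} \mid \mathsf{\vert r\vert} = a\}$ are precisely the $\zeta^{k-2a}$-eigenspaces of $g$; since the $\C_\infty$-weights $k-2a$ ($0 \le a \le k$) are pairwise distinct, these give the isotypic decomposition of $\VV^{\ot k}$, and $\End_{\C_\infty}(\Z_k^{(k-2a)})$ is a full matrix algebra acting irreducibly, with dimension formula $\binom{k}{a}$ from counting tuples with $a$ entries equal to $-1$. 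Part (c) is then Wedderburn applied to (b): $\dimm\Z_k = \sum_{a=0}^k \binom{k}{a}^2 = \binom{2k}{k}$, which is visibly the coefficient of $z^k$ in $(1+z)^{2k} = (1+z)^k(1+z)^k$.

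For part (d), recall that the planar rook algebra $\mathsf{P}_k$ has a basis indexed by pairs $(A,B)$ of subsets of $\{1,\dots,k\}$ with $|A| = |B|$, with a product $(A,B)(C,D) = \delta_{B,C}(A,D)$; equivalently it is the span of planar partial bijection diagrams. The natural bijection sends a basis element $\mathsf{E_{r,s}} \in \Z_k(\C_\infty)$ to the pair $(t(\mathsf r), t(\mathsf s))$, where for a tuple $\mathsf u \in \{-1,1\}^k$ we write $t(\mathsf u) = \{\,i \mid u_i = -1\,\}$ for the set of positions of $-1$'s; the condition $\mathsf{\vert r\vert} = \mathsf{\vert s\vert}$ is exactly $|t(\mathsf r)| = |t(\mathsf s)|$. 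This map is a bijection on bases, and since in both algebras the basis elements multiply as matrix units graded by the common cardinality (that is, $\mathsf{E_{r,s}}\mathsf{E_{u,v}} = \delta_{\mathsf{s,u}}\mathsf{E_{r,v}}$ matches $(A,B)(C,D) = \delta_{B,C}(A,D)$), it extends linearly to an algebra isomorphism $\Z_k(\C_\infty) \cong \mathsf{P}_k$. One should note that as abstract algebras each is isomorphic to $\bigoplus_{a=0}^k M_{\binom{k}{a}}(\CC)$, so the isomorphism can also be seen at the level of Wedderburn components; the diagram identification above makes it canonical and compatible with the diagrammatic description of the $\mathsf{E_{r,s}}$ in Remark \ref{rem:cycdiag}.

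None of the steps presents a genuine obstacle; the one point requiring care is the role of the hypothesis that $\zeta$ is not a root of unity, which is what collapses the $\modd\tilde n$ congruence of the finite case to strict equality of weights and thereby guarantees that $\Lambda_k(\C_\infty) = \{k-2a \mid 0 \le a \le k\}$ consists of $k+1$ distinct labels. The remaining work is bookkeeping: checking the multiplication rule for the $\mathsf{E_{r,s}}$, confirming that the eigenspace decomposition of $g$ is the $\C_\infty$-isotypic decomposition, and matching the two combinatorial bases. I would present (a)--(c) in a couple of lines each, referring back to the corresponding arguments in the proof of Theorem \ref{T:centcycbasis}, and spell out the diagram bijection for (d).
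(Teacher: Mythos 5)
Your proposal is correct and follows essentially the same route as the paper, which proves the theorem by adapting the arguments from the finite cyclic case (Theorem \ref{T:centcycbasis} and Theorem \ref{thm:dims}) with the $\modd\tilde n$ congruence replaced by strict equality of weights, and establishes part (d) exactly via the identification $\mathsf{E_{r,s}} \mapsto \mathsf{X_{R,S}}$ spelled out in Remark \ref{rem:prook}. You correctly identify the key role of $\zeta$ not being a root of unity, which is precisely what collapses the congruence to equality and makes the $k+1$ labels in $\Lambda_k(\C_\infty)$ distinct.
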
  
 \begin{remark}\label{rem:prook}  A word of explanation about part (d) is in order.   The planar rook algebra $\mathsf {PR}_k$ was studied in \cite{FHH},
  where it was shown (see \cite[Prop.~3.3]{FHH}) to have a basis of matrix units  $\{\mathsf{X_{R,S}} \mid \mathsf{R,S} \subseteq \{1,\dots, k\}, \ 
  |\mathsf{R}| = |\mathsf{S}|\}$ such that $\mathsf{X_{R,S}}\mathsf{X_{T,U}} = \delta_{\mathsf{S},\mathsf{T}} \mathsf{X_{R,U}}$. 
Identifying the subset $\mathsf{R}$ of $\{1,\dots, k\}$ with the $k$-tuple $\mathsf{r} = (r_1,\dots, r_k) \in \{-1,1\}^k$ such $r_j = -1$ if 
 $j \in \mathsf{R}$ and $r_j = 1$ if $j \not \in \mathsf{R}$, it is easy to see that $\Z_k(\C_{\infty})  \cong \mathsf {PR}_k$ via the correspondence
  $\mathsf{E_{r,s}} \mapsto \mathsf{X_{R,S}}$.   It follows from Theorem \ref{T:centcycbasis}  and Example 2.19\,(i) that when $k < \tilde n$,  the centralizer algebra $\Z_k(\C_n)$ is also isomorphic to  $\mathsf {PR}_k$.  \end{remark} 
  
\begin{remark}\label{rem:circ}  As a module for the circle subgroup (maximal torus)  
$S^1 = \left \{\left (\begin{smallmatrix}e^{-i t}  & 0 \\ 0 & e^{it} \end{smallmatrix}\right )\, \bigg | \, t \in \mathbb R\right\} $
of $\SU_2$,   $\VV$ has the same decomposition $\VV = \CC \v_{-1} \oplus \CC \v_1$ into submodules (common eigenspaces) as it has for $\C_\infty$.   Thus, the
centralizer algebra $\Z_k(S^1) \cong \Z_k(\C_\infty)$, and its structure and representations are also given by
this theorem.  \end{remark}

\begin{section} {The Binary Dihedral Subgroups} \end{section}
 
Let  $\DD_n$ denote the binary dihedral subgroup of $\mathsf{SU_2}$ of order $4n$  generated by 
the elements $g, h \in \mathsf{SU_2}$, where 
\begin{equation}\label{eq:defngh} g = \left (\begin{array}{cc} \zeta^{-1}  & 0 \\ 0 & \zeta \end{array}\right),  \qquad
h = \left (\begin{array}{cc} 0  & i \\  i & 0 \end{array}\right),\end{equation}
$\zeta = \zeta_{2n}$, a {\it primitive $(2n)$th root of unity} in $\CC$,  and $i = \sqrt{-1}$.  The defining relations for $\DD_n$ are
\begin{equation}\label{eq:defreln} g^{2n} = 1,  \quad  g^n = h^2,  \quad   h^{-1} g h = g^{-1}. \end{equation} 

Each of the nodes $\ac = 0,0',1,2,\dots,n-1,n, n'$ of the affine Dynkin diagram of
type $\hat{\mathsf{ D}}_{\mathsf{n+2}}$  (see Section \ref{subsec:Ddiag}) corresponds to  an irreducible $\DD_n$-module.  
For $\ac= 1,\dots, n-1$,  let  $\DD_n^{(\ac)}$ denote the two-dimensional $\DD_n$-module on
which the generators $g,h$ have the following matrix representations,  
\[g  \ \rightarrow \left (\begin{array}{cc} \zeta^{-\ac}  & 0 \\ 0 & \zeta^\ac\end{array}\right),  \qquad
h \  \rightarrow \left (\begin{array}{cc} 0  & i^\ac \\  i^\ac& 0 \end{array}\right),\]
relative to the basis $\{\vf_{-\ac}, \vf_\ac\}$.  For  $\ac = 0,0',n,n'$, let the one-dimensional 
$\DD_n$-module  $\DD_n^{(\ac)}$ be as follows: 
\begin{equation}  
\begin{array}{lcllll}
\DD_n^{(0)}  &=&  \CC \vf_0,   \qquad  \  &g\vf_0 = \vf_0, \quad \ & h\vf_0 = \vf_0  \\
\DD_n^{(0')} &=& \CC \vf_{0'},  \qquad   &g\vf_{0'} = \vf_{0'}, \quad &h\vf_{0'} = -\vf_{0'}  \\ 
\DD_n^{(n)}  &=& \CC \vf_n,   \qquad   &g\vf_{n} = -\vf_{n}, \quad  &h\vf_{n} = i^n \vf_{n}  \\ 
\DD_n^{(n')} &=& \CC \vf_{n'},  \quad  \  &g\vf_{n'} = -\vf_{n'}, \quad &h\vf_{n'} = -i^n\vf_{n'}.  
\end{array}
\end{equation} 
In each case,  we refer to the given basis  as the ``standard basis" for $\DD_n^{(\ac)}$.   
The modules $\DD_n^{(\ac)}$ for  $\ac= 0,0',1,2,\dots,n-1,n,n'$  give
the  complete  list  of irreducible $\DD_n$-modules up to isomorphism.    

Relative to the standard basis $\{\vf_{-1},\vf_1\}$ for the module $\VV := \DD_n^{(1)}$,
 $g$ and $h$ have the matrix realizations displayed
in \eqref{eq:defngh}, and $\VV$ is the natural $\DD_n$-module of $2 \times 1$ column vectors. \msk
 
 \begin{prop}\label{prop:Vtens} Tensor products of $\VV$ with the irreducible modules
$\DD_n^{(\ac)}$ are given as follows: 
\begin{itemize} 
\item[{\rm (a)}]  
 $\DD_n^{(\ac)} \otimes \VV   \cong   \DD_n^{(\ac-1)} \oplus  \DD_n^{(\ac+1)}$ \ \  for $1< \ac < n-1$; 
\item[{\rm (b)}]  $\DD_n^{(1)} \ot \VV  \cong  \DD_n^{(0')} \oplus  \DD_n^{(0)} \oplus \DD_n^{(2)}$;
\item[{\rm (c)}]  $\DD_n^{(n-1)} \ot \VV  \cong \DD_n^{(n-2)} \oplus \DD_n^{(n)} \oplus  \DD_n^{(n')}$;
\item[{\rm (d)}]  $\DD_n^{(0)} \ot \VV  \cong\DD_n^{(1)} =   \VV$,  \quad   $\DD_n^{(0')} \ot \VV  \cong \DD_n^{(1)} =   \VV$;
\item[{\rm (e)}]  $\DD_n^{(n)} \ot \VV  \cong \DD_n^{(n-1)}$,  \quad  $\DD_n^{(n')} \ot \VV = \DD_n^{(n-1)}$.
\end{itemize}  
   \end{prop}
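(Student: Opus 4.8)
The plan is to compute each tensor product $\DD_n^{(\ell)}\otimes\VV$ directly on the standard bases, identify the resulting $\DD_n$-action, and match it against the standard realizations of the irreducible modules listed above. Recall $\VV=\DD_n^{(1)}$ has standard basis $\{\vf_{-1},\vf_1\}$ with $g\vf_{\pm1}=\zeta^{\pm1}\vf_{\pm1}$ and $h\vf_{\pm1}=i\vf_{\mp1}$ (read off from \eqref{eq:defngh}). For part (a), with $1<\ell<n-1$, I would take the basis $\{\vf_{-\ell}\ot\vf_{-1},\ \vf_{-\ell}\ot\vf_1,\ \vf_\ell\ot\vf_{-1},\ \vf_\ell\ot\vf_1\}$ of $\DD_n^{(\ell)}\ot\VV$. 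Since $g$ acts diagonally, the $g$-eigenvalues on these four vectors are $\zeta^{-\ell-1},\zeta^{-\ell+1},\zeta^{\ell-1},\zeta^{\ell+1}$; because $2\le\ell+1\le n-1$ and $1\le\ell-1\le n-3$, these four exponents are pairwise distinct modulo $2n$ except that $\zeta^{-(\ell-1)}$ and $\zeta^{\ell-1}$ are a conjugate pair, as are $\zeta^{-(\ell+1)},\zeta^{\ell+1}$. So the natural splitting is $\spann\{\vf_{-\ell}\ot\vf_1,\ \vf_\ell\ot\vf_{-1}\}\oplus\spann\{\vf_{-\ell}\ot\vf_{-1},\ \vf_\ell\ot\vf_1\}$; one checks $h$ preserves each summand (it swaps $\vf_{-\ell}\leftrightarrow\vf_\ell$ up to scalar $i^\ell$ and $\vf_{-1}\leftrightarrow\vf_1$ up to $i$). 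On the first summand, ordering the basis as $(\vf_{-\ell}\ot\vf_1,\vf_\ell\ot\vf_{-1})$ and rescaling if necessary, $g$ acts as $\mathrm{diag}(\zeta^{-(\ell-1)},\zeta^{\ell-1})$ and $h$ as the antidiagonal matrix with entries $i\cdot i^\ell=i^{\ell-1}\cdot(\text{sign})$; matching against the standard presentation of $\DD_n^{(\ell-1)}$ — adjusting basis vectors by units to kill the sign ambiguity in $i^{\ell\pm1}$ versus $i^{\ell-1}$ — identifies it with $\DD_n^{(\ell-1)}$, and the second summand similarly with $\DD_n^{(\ell+1)}$.

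The boundary cases (b)–(e) are handled the same way but the $g$-eigenvalue coincidences force extra splitting. For (b), $\DD_n^{(1)}\ot\VV$ has $g$-eigenvalues $\zeta^{-2},1,1,\zeta^2$ on $\vf_{-1}\ot\vf_{-1},\vf_{-1}\ot\vf_1,\vf_1\ot\vf_{-1},\vf_1\ot\vf_1$; the two-dimensional $g$-eigenspace for eigenvalue $1$ is $\spann\{\vf_{-1}\ot\vf_1,\vf_1\ot\vf_{-1}\}$, on which $h$ acts (up to the scalar $i\cdot i=-1$) by the swap, hence has eigenvalues $+1$ and $-1$; its $\pm1$-eigenvectors $\vf_{-1}\ot\vf_1\pm\vf_1\ot\vf_{-1}$ (after rescaling by a unit so $h$ acts as $+1$ resp. $-1$, matching $h\vf_0=\vf_0$ resp. $h\vf_{0'}=-\vf_{0'}$) give $\DD_n^{(0)}$ and $\DD_n^{(0')}$; the remaining $\spann\{\vf_{-1}\ot\vf_{-1},\vf_1\ot\vf_1\}$ carries $g=\mathrm{diag}(\zeta^{-2},\zeta^2)$ and $h$ the swap up to $i^2=-1$, which is $\DD_n^{(2)}$. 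Part (c) is the mirror image at the other end: $\DD_n^{(n-1)}\ot\VV$ has $g$-eigenvalues $\zeta^{-n},\zeta^{-n+2},\zeta^{n-2},\zeta^n$, and $\zeta^{-n}=\zeta^n=-1$, so again the $g$-eigenspace for $-1$ is two-dimensional and splits under $h$ into the one-dimensionals $\DD_n^{(n)},\DD_n^{(n')}$ (distinguished by whether $h$ acts as $i^n$ or $-i^n$), with the complement $\spann\{\vf_{-(n-1)}\ot\vf_1,\vf_{n-1}\ot\vf_{-1}\}\cong\DD_n^{(n-2)}$. Parts (d) and (e) are immediate: $\DD_n^{(0)}\ot\VV$ is two-dimensional with $g$-eigenvalues $\zeta^{\mp1}$, $h$ the swap up to $i$, which is exactly $\VV=\DD_n^{(1)}$; twisting by the sign character (the $h\mapsto-h$ in $\DD_n^{(0')}$) does not change the isomorphism type because one may absorb the sign by negating one basis vector, giving $\DD_n^{(0')}\ot\VV\cong\VV$ too; and likewise $\DD_n^{(n)}\ot\VV$ and $\DD_n^{(n')}\ot\VV$ are two-dimensional with $g$-eigenvalues $(-1)\zeta^{\mp1}=\zeta^{\mp(n-1)}$ (using $\zeta^n=-1$), hence $\cong\DD_n^{(n-1)}$.

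The main obstacle is purely bookkeeping: matching the antidiagonal entries $i^{\ell}\cdot i^{\pm1}$ arising from $h$ acting on a tensor factor against the prescribed entries $i^{\ell\pm1}$ in the standard presentation of $\DD_n^{(\ell\pm1)}$, and similarly tracking the sign ambiguities at the branch nodes $0,0',n,n'$. In each case the discrepancy is a scalar that is a power of $i$ or a sign, and it can be absorbed by rescaling one of the two basis vectors of the two-dimensional summand (or, in the one-dimensional cases, by choosing the $h$-eigenvector with the correct eigenvalue). I would state once, as a preliminary remark, that two-dimensional $\DD_n$-modules with $g=\mathrm{diag}(\zeta^{-\ell},\zeta^\ell)$ ($1\le\ell\le n-1$) and $h$ antidiagonal with product of off-diagonal entries equal to $(i^\ell)^2=i^{2\ell}$ (equivalently $\det h=-i^{2\ell}$, or $h^2=g^n$ on the module) are all isomorphic to $\DD_n^{(\ell)}$, so that only the $g$-weights and the relation $h^2=g^n$ need to be checked rather than the exact matrix entries; this reduces the whole proposition to reading off $g$-eigenvalues, which is mechanical. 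A short closing sentence would note that the count of summands is also forced by $\dim$: the left side has dimension $2\dim\DD_n^{(\ell)}$ and the right side matches.
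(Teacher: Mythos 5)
Your approach is exactly what the paper does (or rather, what the paper declares ``can be readily checked using the standard bases above''): compute $g$-eigenvalues on simple tensors to split $\DD_n^{(\ell)}\otimes\VV$ into two-dimensional pieces, track the $h$-action to recognize each piece, and absorb the residual powers of $i$ by rescaling basis vectors. The preliminary observation you propose---that a two-dimensional $\DD_n$-module with $g=\mathrm{diag}(\zeta^{-\ell},\zeta^{\ell})$ and $h$ antidiagonal with off-diagonal product $i^{2\ell}$ (equivalently, satisfying $h^2=g^n$) is isomorphic to $\DD_n^{(\ell)}$ regardless of the individual antidiagonal entries---is correct and is the right way to avoid chasing powers of $i$ in the generic case (a); the needed change of basis $\vf_{-\ell}\mapsto c\vf_{-\ell}$, $\vf_{\ell}\mapsto d\vf_{\ell}$ rescales the two off-diagonal entries by $cd^{-1}$ and $c^{-1}d$ while preserving their product.

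One computational slip worth flagging in part (b): on $\spann\{\vf_{-1}\otimes\vf_1,\ \vf_1\otimes\vf_{-1}\}$ the map $h$ acts as $-1$ times the swap (because $h\otimes h$ contributes $i\cdot i=-1$), so it is the \emph{antisymmetric} combination $\vf_{-1}\otimes\vf_1-\vf_1\otimes\vf_{-1}$ that has $h$-eigenvalue $+1$ and gives $\DD_n^{(0)}$, and the symmetric combination that has eigenvalue $-1$ and gives $\DD_n^{(0')}$; you have these swapped, and the phrase ``after rescaling by a unit so $h$ acts as $+1$ resp.\ $-1$'' does not repair this, since rescaling an eigenvector cannot change its eigenvalue. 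The same care is needed in part (c), where $h$ acts on the $g$-eigenvalue-$(-1)$ subspace as $i^n$ times the swap, so the $\pm$-combinations have $h$-eigenvalues $\pm i^n$ rather than being fixed up by rescaling. Since parts (b) and (c) assert unordered direct sums, the stated conclusions are unaffected, but the identification of which eigenvector goes where should be stated correctly before this is used (as it is, e.g., in the later discussion of $\f_{(0')}$ and $\f_{(0)}$).
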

 
This can be readily checked using the standard bases above. They are exactly the tensor product
rules given  by the McKay correspondence.  

Assume $\mathsf{s} = (s_1, \dots, s_k) \in \{-1,1\}^k$, and let  $\mathsf{\vert s \vert} = \left |\left \{s_j  \mid s_j= -1\right \}\right |$ as in \eqref{eq:sigvert}.  On  
the vector $\vf_{\mathsf{s}} = \vf_{s_1} \otimes \cdots \otimes \vf_{s_k} \in \VV^{\otimes k}$,  the generators
$g, h$ have the following action:
\begin{equation}\label{eq:gh}  g \vf_{\mathsf{s}} =  \zeta^{k-2\mathsf{\vert s\vert}} \vf_{\mathsf{s}}, \qquad h\vf_{\mathsf{s}} =  i^k  \vf_{\mathsf{-s}}. \end{equation}   
For two such $k$-tuples $\mathsf{r}$ and $\mathsf{s}$,  \  
\[k-2 \mathsf{\vert r\vert}\equiv k-2\mathsf{\vert s \vert}  \modd 2n    \ \iff \  \mathsf{\vert r\vert \equiv\vert s \vert}  \modd n.\] 

  Let  
\begin{eqnarray} \label{eq:lamsDn} \Lambda_k^\bullet(\DD_n) &=& \left \{\ell \in \{0,1,\dots, n\} \ \big | \  \ell \equiv k-2a_\ell\, \modd 2n  \ \ \hbox{\rm for some} \ \ a_\ell \in \{0,1,\dots, k\}\right\} \\  
\Lambda_k(\DD_n) &=& \Lambda_k^\bullet(\DD_n) \cup \left \{ \ell'  \ \big | \ \  \ell \in \Lambda_k^\bullet(\DD_n) \cap \{0,n\}\right \}.
\end{eqnarray}  {\it We will always assume $a_\ell$ is minimal with that property.}  The set $\Lambda_k(\DD_n)$ indexes the irreducible $\DD_n$-modules in $\VV^{\ot k}$. 
  In particular, 
\begin{equation}\label{eq:klcond} \hbox{\rm for} \ \ \DD_n^{(\ell)} \ \hbox{\rm to occur in } \ \VV^{\ot k} \ \ \hbox{\rm it is necessary that}  \ \ k - \ell \equiv 0 \modd 2, \end{equation}
and when $k - \ell \equiv 0 \modd 2$  holds,  then  $i^{k-\ell} = i^{\ell-k} =1$ or $-1$  depending on 
whether $k-\ell$ is $0 \modd 4$ or $2 \modd 4$.    Note also for $\ell = 0,n$ that 
$\DD_n^{(\ell')}$ occurs in $\VV^{\ot k}$ with the same multiplicity as $\DD_n^{(\ell)}$.

 \begin{subsection}{The centralizer algebra $\ZZ_k(\DD_n)$} \end{subsection}  
 
In this section,  we investigate the centralizer algebra 
 $\ZZ_k(\DD_n)= \mathsf{End}_{\DD_n}(\VV^{\otimes k})$ for $\VV = \DD_n^{(1)} = \CC \vf_{-1} \oplus \CC \vf_1$.
 The element  $g$ in \eqref{eq:defngh}   generates a cyclic subgroup $\C_{2n}$ of order $2n$, which implies that
 that $\mathsf{End}_{\DD_n}(\VV^{\otimes k}) = \ZZ_k(\DD_n) \subseteq \ZZ_k(\C_{2n}) =  \mathsf{End}_{\C_{2n}}(\VV^{\otimes k})$.   We will exploit that fact in our considerations.   
 
We impose the following order on $k$-tuples in $\{-1,1\}^k$.\msk

\begin{definition}\label{def:lexorder}  Say $\mathsf{r} \succeq \mathsf{s}$ if
  $\mathsf{\vert r \vert \leq\vert s \vert}$,  and if  $\mathsf{\vert r \vert = \vert s \vert}$,  then $\mathsf{r}$ is greater
  than or equal to $\mathsf{s}$ in the lexicographic order  coming from the relation $1 > -1$.  \end{definition}

\begin{example}  $(1,1,-1,-1,-1,1) \succ (1,-1,1,-1,1,-1)$.   \end{example} 
 
\begin{subsection}{ A basis for $\Z_k(\DD_n)$}  \end{subsection}  

We determine the dimension of $\Z_k(\DD_n)$ and a basis for it. 
It follows from Theorem \ref{T:centcycbasis}\,(a) that a basis for $\Z_k(\C_{2n})$ is given by
\begin{equation}\label{eq:cybasis} \mathcal B^k(\C_{2n}) =  \mathsf{\left \{E_{r,s}  \mid  r,s \in \{-1,1\}^k,  \  \vert r \vert   \equiv \vert{\mathsf{s}}\vert  \modd n\right \},} \end{equation}
where $\mathsf{\vert r \vert}, \mathsf{\vert s \vert} \in \{0,1,\dots, k\}$.   Now 
\begin{equation}\label{eq:rsneg} \mathsf{\vert r \vert = \vert s \vert}  \modd n \iff  \mathsf{\vert -r \vert} = k-\mathsf{\vert r \vert}  \equiv k- \mathsf{\vert s \vert
=\vert -s \vert}
 \modd n,\end{equation}
so  $\mathsf{E_{r,s}} \in \mathcal B^k(\C_{2n})$ if and only if 
$\mathsf{E}_{-\mathsf{r},-\mathsf{s}} \in \mathcal B^k(\C_{2n})$. \msk

\begin{thm}\label{thm:dicentbasis}   \begin{itemize}\item [{\rm (a)}] $\Z_k(\DD_n) = \{ X \in \Z_k(\C_{2n}) \mid  hX = Xh\}$,
where $h$ is as in \eqref{eq:defngh}.
\item [{\rm (b)}]  A basis for $\Z_k(\DD_n) =  \mathsf{End}_{\DD_n}(\VV^{\otimes k})$ is  the set 
\begin{equation}\label{eq:dibasis} \mathcal B^k(\DD_n) = \mathsf{\left \{E_{r,s} + E_{-r,-s} \,\big | \,  r,s \in \{-1,1\}^k, \  r \succ -r,  \  \vert r \vert   \equiv\vert{s}\vert  \modd n\right \}. } \end{equation}
\item[{\rm (c)}]  The dimension of $\Z_k(\DD_n)$ is given by
\begin{eqnarray}\label{eq:didim} 
\dimm \Z_k(\DD_n) &=& \half \dimm \Z_k(\C_{2n}) =  \half  \sum_{{0 \leq a,b\leq k} \atop {a \equiv b \modd n}}  {k \choose a}{k \choose b} \\
&=& \half \left(\hbox{\rm coefficient of } \ z^k  \ \hbox{\rm in} \  (1+z)^{2k} \mid_{z^n = 1} \right). \nonumber \end{eqnarray}  
\item[{\rm (d)}]     The number of walks of $2k$ steps on the affine Dynkin diagram of type 
$\hat{\mathsf D}_{n+2}$ starting and ending  at node $0$ is \  $\displaystyle{\half  \sum_{{0 \leq a,b\leq k} \atop {a \equiv b \modd n}}  {k \choose a}{k \choose b}}$.
\end{itemize} 
\end{thm}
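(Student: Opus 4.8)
The plan is to realize $\Z_k(\DD_n)$ as the fixed-point subalgebra of $\Z_k(\C_{2n})$ under conjugation by $h$, and then to exploit the fact that $h$ acts on the matrix-unit basis of $\Z_k(\C_{2n})$ by a fixed-point-free involution on index pairs, so that the fixed subalgebra has exactly half the dimension and a basis given by the "symmetrized" matrix units.

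First I would prove part (a). Since $\DD_n$ is generated by $g$ and $h$, and the subgroup $\langle g\rangle$ is the cyclic group $\C_{2n}$, an endomorphism $X \in \End(\VV^{\ot k})$ lies in $\Z_k(\DD_n) = \End_{\DD_n}(\VV^{\ot k})$ if and only if it commutes with both $g^{\ot k}$ and $h^{\ot k}$; that is, if and only if $X \in \Z_k(\C_{2n})$ and $h^{\ot k} X = X h^{\ot k}$. (Here I am abusing notation and writing $g,h$ for their action $g^{\ot k}, h^{\ot k}$ on tensor space, consistent with the excerpt.) This is immediate.

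Next, part (b). Using the action $h\vf_{\mathsf s} = i^k \vf_{-\mathsf s}$ from \eqref{eq:gh}, I would compute that conjugation by $h$ sends the matrix unit $\mathsf{E_{r,s}}$ to $h\,\mathsf{E_{r,s}}\,h^{-1} = \mathsf{E}_{-\mathsf r,-\mathsf s}$: indeed $h \mathsf{E_{r,s}} h^{-1} \vf_{\mathsf t} = h\mathsf{E_{r,s}}(i^{-k}\vf_{-\mathsf t}) = i^{-k}\delta_{\mathsf s,-\mathsf t} h\vf_{\mathsf r} = i^{-k}\delta_{-\mathsf s,\mathsf t}\, i^k \vf_{-\mathsf r} = \delta_{-\mathsf s,\mathsf t}\vf_{-\mathsf r}$, which is $\mathsf{E}_{-\mathsf r,-\mathsf s}\vf_{\mathsf t}$. (The scalar $i^k$ cancels, so no subtlety with parities here; note $\mathsf r \neq -\mathsf r$ always since $k$-tuples of $\pm1$ can't be self-negating, so the involution $\mathsf{E_{r,s}}\mapsto\mathsf{E}_{-\mathsf r,-\mathsf s}$ on the basis $\mathcal B^k(\C_{2n})$ has no fixed points; this uses \eqref{eq:rsneg} to see it preserves $\mathcal B^k(\C_{2n})$). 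Then an arbitrary $X = \sum_{\mathsf{E_{r,s}}\in\mathcal B^k(\C_{2n})} c_{\mathsf{r,s}} \mathsf{E_{r,s}} \in \Z_k(\C_{2n})$ commutes with $h$ precisely when $c_{\mathsf{r,s}} = c_{-\mathsf r,-\mathsf s}$ for all pairs; so such $X$ is a linear combination of the elements $\mathsf{E_{r,s}} + \mathsf{E}_{-\mathsf r,-\mathsf s}$, and choosing the representative with $\mathsf r \succ -\mathsf r$ (Definition~\ref{def:lexorder}) from each orbit gives exactly the set $\mathcal B^k(\DD_n)$ as a spanning set; linear independence is inherited from $\mathcal B^k(\C_{2n})$. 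Part (c) then follows at once: the involution partitions $\mathcal B^k(\C_{2n})$ into two-element orbits, so $\dimm\Z_k(\DD_n) = \half\dimm\Z_k(\C_{2n})$, and the explicit formula is read off from Theorem~\ref{T:centcycbasis}\,(d) with $2n$ in place of $n$ (so $\widetilde{2n} = n$, since $2n$ is even).

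I do not anticipate a serious obstacle; the only point requiring a little care is verifying that the involution $\mathsf{r}\mapsto-\mathsf{r},\ \mathsf{s}\mapsto-\mathsf{s}$ genuinely preserves the defining congruence $\mathsf{\vert r\vert}\equiv\mathsf{\vert s\vert}\bmod n$ for membership in $\mathcal B^k(\C_{2n})$ — this is exactly \eqref{eq:rsneg} — and that it is fixed-point-free on index pairs, which is what makes the dimension exactly halve rather than merely be bounded above by half. One should also double-check the bookkeeping in passing from $\Z_k(\C_m)$ with $m=2n$ to the formula: the relevant modulus appearing in Theorem~\ref{T:centcycbasis} is $\widetilde m$, and for $m=2n$ even we have $\widetilde m = n$, which matches the congruence $a\equiv b\bmod n$ in \eqref{eq:didim}.
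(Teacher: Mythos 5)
Your proposal is correct and follows essentially the same route as the paper: both realize $\Z_k(\DD_n)$ as the fixed subalgebra of $\Z_k(\C_{2n})$ under conjugation by $h$, observe (by applying $h$ to basis vectors) that this conjugation sends $\mathsf{E_{r,s}}$ to $\mathsf{E}_{-\mathsf r,-\mathsf s}$, deduce the coefficient condition $X_{\mathsf{r,s}}=X_{-\mathsf r,-\mathsf s}$, and halve the dimension using Theorem~\ref{T:centcycbasis} with modulus $\widetilde{2n}=n$. Your explicit remark that the involution is fixed-point-free on index pairs (since $\mathsf r\neq -\mathsf r$) is exactly the point the paper handles implicitly by choosing representatives with $\mathsf r\succ -\mathsf r$.
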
  \smallskip
 
\begin{proof}  Since $\ZZ_k(\DD_n) \subseteq \ZZ_k(\C_{2n})$, we may assume that $X \in \Z_k(\DD_n)$ can be written
as \[X = \sum_{\mathsf {\vert r \vert = \vert s \vert} \modd n}  X_{\mathsf{r,s}} \mathsf{E_{r,s}},\] 
and that $X$ commutes with the generator $g$ of $\DD_n$  in  \eqref{eq:defngh}.   In order for $X$ to belong to  $\ZZ_k(\DD_n)$,
$hX = Xh$ must hold, as asserted in (a).      Applying both sides of  $hX = Xh$  to $\vf_{\mathsf{s}}$, we obtain 
\[\sum_{\mathsf {\vert r \vert = \vert s\vert} \modd n}    i^k X_{\mathsf{r,s}} \vf_{\mathsf{-r}}  =\sum_{\mathsf {\vert t \vert = \vert -s \vert} \modd n}    i^k X_{\mathsf{t,-s}}\vf_{\mathsf{t}}.\]    
The coefficient $i^k X_{\mathsf{r,s}}$ of $\vf_{\mathsf{-r}}$ on the left  is nonzero  if and only if 
the coefficient $i^k X_{\mathsf{-r,-s}}$ of  $\vf_{\mathsf{-r}}$ on the right is nonzero, and they are equal.
Hence,  $Xh = hX$ if and only if $X_{\mathsf{-r,-s}} = X_{\mathsf{r,s}}$ for all $\mathsf {\vert r \vert = \vert s \vert} \modd n.$ 
Therefore,  
we have  
\[X =  \sum_{{\mathsf {\vert r \vert = \vert s \vert} \modd n} \atop {\mathsf{r} \succ \mathsf{-r}}}   X_{\mathsf{r,s}}( \mathsf{E_{r,s} + E_{-r,-s}}).\]
  Thus,  the set $\mathcal B^k(\DD_n)$ in \eqref{eq:dibasis}  spans $\ZZ_k({\DD_n})$, and
since it is clearly linearly independent,   it is a basis for $\ZZ_k({\DD_n})$.  

Part (c)  is apparent from \eqref{eq:cybasis},  part (b), and  Theorem \ref{T:centcycbasis}\,(c),  which says that 
 $\dimm \Z_k(\C_{2n})$  is the coefficient of
 $z^k$ in $(1+z)^{2k} \mid_{z^n = 1}$.    Part (d) follows directly from (c) and \eqref{eq:evid}.
 \end{proof}   
 
\begin{example}\label{ex:dih2} {\rm Assume $k = 4$ and $n=5$.   Then    
$\dimm \Z_k(\DD_n)$ is $\half$ the coefficient of $z^4$ in 
\[(1+z)^8\mid_{z^5 = 1}  = 1 + 8z + 28 z^2 + 56 z^3 + 70 z^4 + 56 + 28 z + 8 z^2 + z^3,\] 
so that $\dimm \Z_{4}(\DD_5) = \half \cdot 70 = 35$.   Since $z^4$ appears only once in  
$(1+z)^8\mid_{z^n = 1}$ for $n \geq 5$,  in fact  $\dimm \Z_{4}(\DD_n) = 35$ for all $n \geq 5$.  

Now when $n=4=k$,     
$\dimm \Z_k(\DD_n)$ is $\half$ the coefficient of $z^4 = z^0 = 1$ in 
\[(1+z)^8\mid_{z^4 = 1}  = 1 + 8z + 28 z^2 + 56 z^3 + 70 + 56z + 28 z^2 + 8 z^3 + 1.\] 
Thus,  $ \dimm \mathsf{Z}_4(\DD_4) = \half(1+70+1) = 36$.} 
\end{example} 

\begin{subsection}{Copies of $\DD_n^{(\ac)}$  in $\VV^{\ot k}$ for $\ac \in \{1,\dots, n-1\}$}
\end{subsection}   

The next result locates copies of  $\DD_n^{(\ac)}$ inside $\VV^{\ot k}$ when  $1\leq \ac \leq n-1$.\msk
 
\begin{thm}\label{thm:mj} Assume $\ell \in \Lambda_k(\DD_n)$ and $1 \leq \ac \leq n-1$.   Set
\begin{equation}\label{eq:Kdef} {\kl}_\ac  =  \left \{ \mathsf{r}\in \{-1,1\}^k  \ \Big  | \   
k-2 \mathsf{\vert r \vert} \equiv \ac \modd 2n,  \  \mathsf{r \succ -r}  \right \}.  \end{equation}
\begin{itemize} 
\item[{\rm (i)}]  For each $\mathsf{r} \in {\kl}_{\ac}$,  the vectors 
$i^{\ac-k} \vf_{\mathsf {-r}}, \, \vf_{\mathsf {r}}$  determine  a standard basis for a copy of $\DD_n^{(\ac)}$.  
\item[{\rm (ii)}]   For  $\mathsf{r,s} \in {\kl}_{\ac}$, the transformation
$\mathsf{e_{r,s}} := \mathsf{E_{r,s} + E_{-r,-s}}\in \Z_k(\DD_n) =  \mathsf{End}_{\DD_n}(\VV^{\otimes k})$ satisfies 
\[ \mathsf{e_{r,s}}(\vf_{\mathsf t}) = \delta_{\mathsf{s,t}} \vf_{\mathsf{r}}, \qquad
\mathsf{e_{r,s}}(i^{\ac-k} \vf_{\mathsf {-t}})  =   \delta_{\mathsf{s,t}}  i^{\ac-k} \vf_{\mathsf {-r}}.\]
\item[{\rm (iii)}]  For  $\mathsf{r,s,t,u} \in{\kl}_{\ac}$, \
$\mathsf{e_{r,s}} \mathsf{e_{t,u}} = \delta_{\mathsf{s,t}} \mathsf{e_{r,u}}$ holds, so 
$\spann_{\mathbb C}\{\mathsf{e_{r,s}} \mid \mathsf{r,s} \in{\kl}_{\ac}\}$ can be 
identified with the matrix algebra $\mathcal M_{d_\ac}(\mathbb C)$, where $d_\ac  = |{\kl}_\ac |$.
\item[{\rm (iv)}]   $\mathsf{B}_{k}^{(\ac)}: = \spann_{\mathbb C}\{i^{\ac-k}\vf_{\mathsf{-r}}, \vf_{\mathsf r} \mid  \mathsf{r} \in {\kl}_{\ac}\}$
is an irreducible bimodule for  $\DD_n \times \mathcal M_{d_{\ac}}(\mathbb C)$.  
\end{itemize} 
\end{thm}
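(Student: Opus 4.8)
The plan is to verify the four assertions in sequence, the first two by direct computation with the explicit actions of $g$ and $h$ from \eqref{eq:gh}, and the last two as formal consequences together with a dimension count.

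First I would prove (i). Fix $\mathsf r \in \kl_\ac$, so $k - 2|\mathsf r| \equiv \ac \pmod{2n}$ and $\mathsf r \succ -\mathsf r$. From \eqref{eq:gh} we have $g\vf_{\mathsf r} = \zeta^{k-2|\mathsf r|}\vf_{\mathsf r} = \zeta^{\ac}\vf_{\mathsf r}$ and $g\vf_{-\mathsf r} = \zeta^{k-2|-\mathsf r|}\vf_{-\mathsf r} = \zeta^{-(k-2|\mathsf r|)}\vf_{-\mathsf r} = \zeta^{-\ac}\vf_{-\mathsf r}$, using $|-\mathsf r| = k - |\mathsf r|$. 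Also $h\vf_{\mathsf r} = i^k\vf_{-\mathsf r}$ and $h\vf_{-\mathsf r} = i^k\vf_{\mathsf r}$. Setting $\wf_{-\ac} := i^{\ac - k}\vf_{-\mathsf r}$ and $\wf_{\ac} := \vf_{\mathsf r}$, one checks $g\wf_{-\ac} = \zeta^{-\ac}\wf_{-\ac}$, $g\wf_{\ac} = \zeta^{\ac}\wf_{\ac}$, $h\wf_{\ac} = i^k\vf_{-\mathsf r} = i^{\ac}(i^{\ac-k})\vf_{-\mathsf r}\cdot i^{-2\ac+2k}$; more carefully, $h\wf_{\ac} = h\vf_{\mathsf r} = i^k\vf_{-\mathsf r} = i^{\ac}\cdot i^{k-\ac}\vf_{-\mathsf r} = i^{\ac}\wf_{-\ac}$ precisely when $i^{k-\ac} = (i^{\ac-k})^{-1}$, which holds since $k-\ac$ is even (by \eqref{eq:klcond}) so $i^{\ac-k} = \pm 1$ is its own inverse. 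Similarly $h\wf_{-\ac} = i^{\ac-k}h\vf_{-\mathsf r} = i^{\ac-k}i^k\vf_{\mathsf r} = i^{\ac}\wf_{\ac}$. Comparing with the defining matrices of $\DD_n^{(\ac)}$ relative to $\{\vf_{-\ac},\vf_\ac\}$, this is exactly a standard basis for a copy of $\DD_n^{(\ac)}$.

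Next, (ii) is a routine computation: $\mathsf e_{\mathsf{r,s}} = \mathsf E_{\mathsf{r,s}} + \mathsf E_{-\mathsf r,-\mathsf s}$ lies in $\mathcal B^k(\DD_n)$ by Theorem \ref{thm:dicentbasis}(b) since $\mathsf r \succ -\mathsf r$ and $|\mathsf r| \equiv |\mathsf s| \pmod n$ (both being $\equiv \tfrac12(k-\ac)\pmod n$); applying it to $\vf_{\mathsf t}$ gives $\delta_{\mathsf{s,t}}\vf_{\mathsf r} + \delta_{-\mathsf s,\mathsf t}\vf_{-\mathsf r}$, and the second term vanishes because $\mathsf t = -\mathsf s$ is impossible when $\mathsf s \succ -\mathsf s$ forces $\mathsf s \neq -\mathsf s$; here one must note $\mathsf t$ ranges over the basis vectors so $\delta_{-\mathsf s,\mathsf t}$ kills it unless $\mathsf t = -\mathsf s$, and then $\mathsf e_{\mathsf{r,s}}\vf_{-\mathsf s} = \vf_{-\mathsf r}$, consistent with the claimed action on $i^{\ac-k}\vf_{-\mathsf t}$ after scaling. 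For (iii), $\mathsf e_{\mathsf{r,s}}\mathsf e_{\mathsf{t,u}} = (\mathsf E_{\mathsf{r,s}} + \mathsf E_{-\mathsf r,-\mathsf s})(\mathsf E_{\mathsf{t,u}} + \mathsf E_{-\mathsf t,-\mathsf u}) = \delta_{\mathsf{s,t}}\mathsf E_{\mathsf{r,u}} + \delta_{\mathsf s,-\mathsf t}\mathsf E_{\mathsf r,-\mathsf u} + \delta_{-\mathsf s,\mathsf t}\mathsf E_{-\mathsf r,\mathsf u} + \delta_{\mathsf{s,u}... }$; the cross terms vanish since $\mathsf s = -\mathsf t$ cannot occur for $\mathsf s,\mathsf t \in \kl_\ac$ (as $k - 2|\mathsf s| \equiv \ac$ and $k - 2|-\mathsf t| \equiv -\ac \pmod{2n}$ would force $2\ac \equiv 0\pmod{2n}$, i.e. $\ac = n$ or $0$, excluded), leaving $\delta_{\mathsf{s,t}}(\mathsf E_{\mathsf{r,u}} + \mathsf E_{-\mathsf r,-\mathsf u}) = \delta_{\mathsf{s,t}}\mathsf e_{\mathsf{r,u}}$. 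Thus $\spann\{\mathsf e_{\mathsf{r,s}}\}$ is a full matrix algebra $\mathcal M_{d_\ac}(\CC)$ with $d_\ac = |\kl_\ac|$.

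Finally, (iv): part (i) shows $\mathsf B_k^{(\ac)}$ decomposes as a $\DD_n$-module into $d_\ac$ copies of the irreducible $\DD_n^{(\ac)}$, one for each $\mathsf r \in \kl_\ac$, and part (iii) shows the $\mathsf e_{\mathsf{r,s}}$ act on $\mathsf B_k^{(\ac)}$ exactly as the matrix units permuting these copies — so $\mathcal M_{d_\ac}(\CC)$ acts with the $d_\ac$-dimensional simple module, and $\DD_n$ acts with $\DD_n^{(\ac)}$ in each slot. Hence $\mathsf B_k^{(\ac)} \cong \DD_n^{(\ac)} \otimes \CC^{d_\ac}$ as a $\DD_n \times \mathcal M_{d_\ac}(\CC)$-bimodule, which is irreducible since it is a tensor product of irreducibles over the two commuting algebras (double-centralizer, as in Section 1.4). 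The main obstacle is purely bookkeeping: keeping straight the parity condition $k - \ac$ even (needed so that $i^{\ac-k} = \pm1$ squares to $1$) and the exclusion $\ac \notin\{0,n\}$ (needed so that $\mathsf r \in \kl_\ac \Rightarrow -\mathsf r \notin \kl_\ac$, which is what makes the $\mathsf e_{\mathsf{r,s}}$ genuine matrix units rather than a twisted version); these are exactly the features that distinguish the two-dimensional $\DD_n^{(\ac)}$ from the one-dimensional modules at $\ac = 0, n$, which will be handled separately.
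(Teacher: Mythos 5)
Your proof is correct, and for parts (i)--(iii) it proceeds essentially as the paper does: a direct check of the $g$- and $h$-actions using \eqref{eq:gh} for (i), then a matrix-unit computation for (ii) and (iii) together with the observation that for $\ac \in \{1,\dots,n-1\}$ one cannot have both $\mathsf{s}$ and $-\mathsf{s}$ in $\kl_\ac$ (since $k-2\vert{-\mathsf{s}}\vert \equiv -\ac \not\equiv \ac \pmod{2n}$). One small wrinkle in your (ii): the sentence ``\,$\mathsf t = -\mathsf s$ is impossible when $\mathsf s \succ -\mathsf s$ forces $\mathsf s \neq -\mathsf s$\,'' does not actually establish what is needed --- $\mathsf{s} \neq -\mathsf{s}$ says nothing about whether some \emph{other} $\mathsf t \in \kl_\ac$ could equal $-\mathsf s$. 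The correct reason is the one you give in (iii), namely that $-\mathsf s \notin \kl_\ac$ at all when $1 \leq \ac \leq n-1$; this is the argument the paper uses, and since you supply it explicitly in (iii) this is only a local misstatement, not a gap.

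Part (iv) is where you genuinely diverge from the paper. The paper proves irreducibility by a hands-on calculation: it takes a nonzero vector $u$ in a sub-bimodule $\mathsf S$, applies $g - \zeta^{-\ac}\cdot\mathrm{id}$ to project onto the $\zeta^\ac$-eigenspace, then hits with the matrix units $\mathsf e_{\mathsf{t,u}}$ and finally with $h$ to sweep out all of $\mathsf B_k^{(\ac)}$. You instead observe that (i) and (ii) exhibit a bimodule isomorphism $\mathsf B_k^{(\ac)} \cong \DD_n^{(\ac)} \otimes \CC^{d_\ac}$, and then appeal to the standard fact (over $\CC$, for semisimple algebras) that the outer tensor product of an irreducible $\DD_n$-module with an irreducible $\mathcal M_{d_\ac}(\CC)$-module is an irreducible bimodule. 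Your route is shorter and more conceptual, and it makes the structural role of (i)--(iii) transparent; the paper's route is more elementary and self-contained (it does not invoke double-centralizer machinery, only the explicit actions already in hand). Both are valid; yours arguably sits better alongside the Schur--Weyl framework of Section 1.4.
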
  

\begin{proof} (i)  Since  $\mathsf{r} \in \kl_\ac$ satisfies   $k-2 \mathsf {\vert r\vert}  \equiv \ac \modd 2n$, we have
 for the generators $g,h$ in \eqref{eq:defngh},
\begin{eqnarray*} &&g (i^{\ac-k}\vf_{\mathsf{-r}})  = \zeta^{-\ac} ( i^{\ac-k} \vf_{\mathsf{-r}})\quad  \hbox{and} \quad
g \vf_{\mathsf r} =  \zeta^\ac \vf_{\mathsf r}, \\ 
&& h (i^{\ac-k} \vf_{\mathsf{-r}})   =  i^\ac  \vf_{\mathsf r} 
\quad  \hbox{and} \quad
h \vf_{\mathsf r} =  i^k  \vf_{\mathsf {-r}} =  i^\ac(i^{k-\ac}\vf_{\mathsf {-r}}) =
i^\ac (i^{\ac-k} \vf_{\mathsf {-r}}),  \end{eqnarray*}  
so $\{ i^{\ac-k}  \vf_{\mathsf {-r}}, \vf_{\mathsf r}\}$  forms a standard basis for a copy of  $\DD_n^{(\ac)}$. 

Part (ii) can be verified by direct computation.  

For (iii) note that if $\mathsf{s} \in {\kl}_\ell$, then $\mathsf{-s} \not \in  {\kl}_\ell$, as otherwise $-\ell = k-2\mathsf{\vert -s\vert}  \equiv \ell \modd 2n$,
which is impossible for $\ell \in \{1,\dots, n-1\}$.   Thus, for $\mathsf{r,s,t,u \in \kl_\ell}$,  
\[\mathsf{e_{r,s} e_{t,u} = \delta_{s,t} e_{r,u} + \delta_{-s,t}e_{-r,u} = \delta_{s,t}e_{r,u}.}\]

For  (iv),  assume $\mathsf{S} \neq 0$ is a sub-bimodule of $\mathsf B_k^{(\ac)}$, and $0 \neq u = \sum_{\mathsf{r}  \in {\kl}_\ac} \left(  \gamma_{-\mathsf{r}} i^{\ac-k}\vf_{-\mathsf{r}} + \gamma_{\mathsf{r}}\vf_{\mathsf r} \right)  \in \mathsf{S}$.  
Then since $k-2\mathsf{\vert r \vert}  \equiv \ac \modd 2n$,  \  $gu - \zeta^{-\ac}u = (\zeta^\ac - \zeta^{-\ac}) \sum_{\mathsf{r}  \in {\kl}_\ac}   \gamma_{\mathsf{r}}\vf_{\mathsf r} \in \mathsf{S}$.  As  $\zeta^\ac - \zeta^{-\ac} \neq 0$ for $\ac=1,\dots, n-1$,  we have that
$w: = \sum_{\mathsf{r}  \in {\kl}_\ac}   \gamma_{\mathsf{r}} \vf_{\mathsf r} \in \mathsf{S}$.
If  $\gamma_{\mathsf{u}} \neq 0$ for some $\mathsf{u} \in {\kl}_\ac$,
then  $\mathsf{e_{t,u}}w =  \gamma_{\mathsf u} \vf_{\mathsf t} \in \mathsf{S}$
for all  $\mathsf{t} \in {\kl}_\ac$.   But then $h \vf_{\mathsf{t}}  = i^k \vf_{-\mathsf{t}} \in
\mathsf{S}$ for all such $\mathsf{t}$ as well.  This implies $\mathsf{S} = \mathsf{B}_{k}^{(\ac)}$.
If instead $\gamma_{\mathsf{u}} = 0$
for all $\mathsf{u} \in {\kl}_\ac$, then 
$u = \sum_{\mathsf{r}  \in {\kl}_\ac}  \gamma_{-\mathsf{r}} i^{\ac-k} \vf_{-\mathsf{r}}$.
There is some   $\gamma_{\mathsf{-s}} \neq 0$, and applying
$\mathsf{e_{t,s}}$ to $u$ shows that $\vf_{-\mathsf{t}} \in \mathsf{S}$ for
all $\mathsf{t}  \in {\kl}_\ac$.  Applying $h$ to those elements shows that
$\vf_{\mathsf{t}} \in \mathsf{S}$ for all $\mathsf{t} \in {\kl}_\ac$.  Hence
$\mathsf{S} = \mathsf{B}_{k}^{(\ac)}$ in this case also,  and  $\mathsf{B}_{k}^{(\ac)}$ must be irreducible.  \end{proof} 

\begin{subsection}{Copies of $\DD_n^{(\ac)}$ in $\VV^{\ot k}$ for $\ac \in \{0,0',n,n'\}$}

\end{subsection}  Throughout this section,  let  $\ac= 0,n$ and $\ac' = 0',n'$.  
Observe that if $k - 2\mathsf{\vert r \vert} \equiv \ac \modd 2n$, for $\ac= 0,n$,
 then $k - 2\mathsf{\vert -r \vert} \equiv -\ac \equiv \ac \modd 2n$.  \msk    
 
\begin{thm}\label{thm:m0n} Assume $\ac= 0$ or $n$ and $\ell \in \Lambda_k(\DD_n)$.   Let
\begin{equation}\label{eq:K0ndef} \ {\kl}_\ac 
 = \left \{ \mathsf{r}\in \{-1,1\}^k \ \Big  | \   
k-2\mathsf{\vert r \vert}  \equiv \ac \  \modd 2n, \ \mathsf{r \succ -r} \right \} \end{equation} 
\begin{itemize}
\item[{\rm (i)}]  $\CC(\vf_{\mathsf r}+ i^{\ac-k} \vf_{\mathsf {-r}}) \cong \DD_n^{(\ac)}$  and   $\CC (\vf_{\mathsf r}- i^{\ac-k} \vf_{\mathsf {-r}})\cong \DD_n^{(\ac')}$  for each $\mathsf{r} \in {\kl}_\ac$.  
\item[{\rm (ii)}]   For  $\mathsf{r,s} \in {\kl}_\ac$, the transformations 
\[\mathsf{e^{\pm}_{r,s}} := \frac{1}{2}\Big((\mathsf{E_{r,s} + E_{-r,-s}}) \pm i^{\ac-k}
(\mathsf{E_{r,-s} + E_{-r,s}})\Big) \in \Z_k(\DD_n)\] satisfy 
\begin{eqnarray*} \mathsf{e^{\pm}_{r,s}}(\vf_{\mathsf t} \pm i^{\ac-k} \vf_{\mathsf {-t}}) &=& 
\delta_{\mathsf{s,t}}(\vf_{\mathsf r} \pm i^{\ac-k} \vf_{\mathsf {-r}}) \\
\mathsf{e^{\pm}_{r,s}}(\vf_{\mathsf t} \mp i^{\ac-k} \vf_{\mathsf {-t}}) &=& 
0. \end{eqnarray*}  
\item[{\rm (iii)}]  For  $\mathsf{r,s,t,u} \in {\kl}_\ac$,  $\mathsf{e^{\pm}_{r,s}}  \mathsf{e^{\pm}_{t,u}} = \delta_{\mathsf{s,t}} \mathsf{e^{\pm}_{r,u}}$ and  $\mathsf{e^{\pm}_{r,s}}  \mathsf{e^{\mp}_{t,u}} = 0$  hold. Therefore
$\spann_{\mathbb C}\{\mathsf{e^\pm_{r,s}}\}$ can be identified with the matrix algebra  $\mathcal M_{d_\ac}(\mathbb C)^{\pm}$,  where $d_\ac = |{\kl}_\ac |$.  
\item[{\rm (iv)}]   $\mathsf{B}_{k}^{(\ac)}: = \mathsf{span}_{\mathbb C}\{\vf_{\mathsf r}+ i^{\ac-k} \vf_{\mathsf{-r}} \mid  \mathsf{r} \in {\kl}_\ac\}$ and $\mathsf{B}_{k}^{(\ac')}: = \mathsf{span}_{\mathbb C}\{\vf_{\mathsf r}- i^{\ac-k} \vf_{\mathsf{-r}} \mid  \mathsf{r} \in {\kl}_\ac\}$
are irreducible bimodules for $\DD_n \times \mathcal M_{d_\ac}(\mathbb C)^\pm$.  

\end{itemize} 
\end{thm}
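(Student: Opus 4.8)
The plan is to mimic the proof of Theorem~\ref{thm:mj}, adapting it to the slightly more delicate situation in which $\DD_n^{(\ell)}$ is reducible as a $\C_{2n}$-module only after we split off the $\pm$ eigenspaces of $h$. Throughout, fix $\ac\in\{0,n\}$, write $j:=\ac-k$ (so $i^j=i^{\ac-k}$, which is $\pm1$ by \eqref{eq:klcond}), and recall from \eqref{eq:gh} that $g\vf_{\mathsf r}=\zeta^{k-2|\mathsf r|}\vf_{\mathsf r}$ and $h\vf_{\mathsf r}=i^k\vf_{\mathsf{-r}}$. First, for part (i), take $\mathsf r\in\kl_\ac$. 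Since $k-2|\mathsf r|\equiv\ac\equiv-\ac\pmod{2n}$ and $k-2|\mathsf{-r}|=2|\mathsf r|-k\equiv-\ac\equiv\ac\pmod{2n}$, the element $g$ acts by $\zeta^{\pm\ac}$; but $\zeta^\ac=\zeta^{-\ac}$ exactly because $\ac\in\{0,n\}$ and $\zeta=\zeta_{2n}$, so $g$ acts as the scalar $\zeta^\ac$ (which is $1$ if $\ac=0$ and $-1$ if $\ac=n$) on all of $\spann\{\vf_{\mathsf r},\vf_{\mathsf{-r}}\}$. Meanwhile $h(\vf_{\mathsf r}\pm i^j\vf_{\mathsf{-r}})=i^k\vf_{\mathsf{-r}}\pm i^k i^j\vf_{\mathsf r}=\pm i^k i^j(\vf_{\mathsf r}\pm i^{-j}\vf_{\mathsf{-r}})=\pm i^\ac(\vf_{\mathsf r}\pm i^j\vf_{\mathsf{-r}})$, using $i^k i^j=i^{k+\ac-k}=i^\ac$ and $i^{-j}=i^j$ (as $i^j=\pm1$). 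Comparing with the one-dimensional module tables for $\DD_n^{(0)},\DD_n^{(0')},\DD_n^{(n)},\DD_n^{(n')}$, this identifies $\CC(\vf_{\mathsf r}+i^j\vf_{\mathsf{-r}})\cong\DD_n^{(\ac)}$ and $\CC(\vf_{\mathsf r}-i^j\vf_{\mathsf{-r}})\cong\DD_n^{(\ac')}$.

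For part (ii), I would just compute directly. By definition $\mathsf e^{\pm}_{\mathsf{r,s}}=\tfrac12\big((\mathsf E_{\mathsf{r,s}}+\mathsf E_{\mathsf{-r,-s}})\pm i^j(\mathsf E_{\mathsf{r,-s}}+\mathsf E_{\mathsf{-r,s}})\big)$, so applying it to $\vf_{\mathsf t}\pm i^j\vf_{\mathsf{-t}}$ and using $\mathsf E_{\mathsf{p,q}}\vf_{\mathsf u}=\delta_{\mathsf{q,u}}\vf_{\mathsf p}$ together with $(i^j)^2=1$ gives $\delta_{\mathsf{s,t}}(\vf_{\mathsf r}\pm i^j\vf_{\mathsf{-r}})$ after the cross terms combine; applying it to $\vf_{\mathsf t}\mp i^j\vf_{\mathsf{-t}}$ the same terms cancel, giving $0$. (One must note here, exactly as in the proof of Theorem~\ref{thm:mj}(iii), that $\mathsf r\in\kl_\ac$ forces $\mathsf{-r}\notin\kl_\ac$, so the labels are unambiguous and $\{\vf_{\mathsf t},\vf_{\mathsf{-t}}\mid\mathsf t\in\kl_\ac\}$ is a basis of the $g$-eigenspace $\Z_k^{(\ac)}(\C_{2n})$ restricted appropriately.) That each $\mathsf e^\pm_{\mathsf{r,s}}$ lies in $\Z_k(\DD_n)$ follows from Theorem~\ref{thm:dicentbasis}(a)--(b): it is visibly a $\C_{2n}$-centralizing element (a combination of $\mathsf E_{\mathsf{p,q}}$ with $|\mathsf p|\equiv|\mathsf q|\pmod n$, since $|\mathsf r|\equiv|\mathsf s|$ and $|\mathsf r|\equiv k-|\mathsf{-s}|$), and it is symmetric under the simultaneous negation $\mathsf E_{\mathsf{p,q}}\mapsto\mathsf E_{\mathsf{-p,-q}}$, hence commutes with $h$.

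For part (iii), the multiplication $\mathsf e^{\pm}_{\mathsf{r,s}}\mathsf e^{\pm}_{\mathsf{t,u}}=\delta_{\mathsf{s,t}}\mathsf e^{\pm}_{\mathsf{r,u}}$ and $\mathsf e^{\pm}_{\mathsf{r,s}}\mathsf e^{\mp}_{\mathsf{t,u}}=0$ is a bookkeeping check: expand using $\mathsf E_{\mathsf{p,q}}\mathsf E_{\mathsf{p',q'}}=\delta_{\mathsf{q,p'}}\mathsf E_{\mathsf{p,q'}}$, and recall $\mathsf{-s}\neq\mathsf t,\mathsf u$ etc.\ for tuples in $\kl_\ac$, so all ``wrong-sign'' cross terms either vanish or recombine with coefficient $\tfrac14\big(1\mp1\big)$. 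This shows $\spann\{\mathsf e^+_{\mathsf{r,s}}\}\cong\mathcal M_{d_\ac}(\CC)$ and $\spann\{\mathsf e^-_{\mathsf{r,s}}\}\cong\mathcal M_{d_\ac}(\CC)$ as two orthogonal matrix blocks, where $d_\ac=|\kl_\ac|$. Part (iv): $\mathsf B_k^{(\ac)}=\spann\{\vf_{\mathsf r}+i^j\vf_{\mathsf{-r}}\mid\mathsf r\in\kl_\ac\}$ is $\DD_n$-stable by (i) and $\mathcal M_{d_\ac}(\CC)^+$-stable by (ii); irreducibility as a bimodule follows the same two-move argument as in Theorem~\ref{thm:mj}(iv): starting from any nonzero $u=\sum_{\mathsf r}\gamma_{\mathsf r}(\vf_{\mathsf r}+i^j\vf_{\mathsf{-r}})$, pick $\mathsf s$ with $\gamma_{\mathsf s}\neq0$ and apply $\mathsf e^+_{\mathsf{t,s}}$ to land on a scalar multiple of $\vf_{\mathsf t}+i^j\vf_{\mathsf{-t}}$ for every $\mathsf t\in\kl_\ac$, so the submodule is everything; the $-$ case is identical. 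The main obstacle, and the only place real care is needed, is tracking the powers of $i$: one must use $k-\ac$ even to know $i^{\ac-k}=\pm1$, and use $i^k i^{\ac-k}=i^\ac$ together with $\ac\in\{0,n\}$ to get the eigenvalue of $h$ to come out to exactly $\pm i^\ac$ matching the module tables — a sign error here would swap $\DD_n^{(\ac)}$ and $\DD_n^{(\ac')}$.
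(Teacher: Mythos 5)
Your proposal is correct and takes essentially the same approach as the paper's own (rather terse) proof, which reduces (i) to \eqref{eq:gh}, defers (ii)--(iii) to direct computation hinging on the observation that $\mathsf{r,s}\in\kl_\ell$ forces $\mathsf{r}\neq-\mathsf{s}$, and models (iv) on the proof of Theorem \ref{thm:mj}(iv). You have correctly identified that same key fact, correctly tracked the powers of $i$ to match the $h$-eigenvalue with the module tables in part (i), and correctly observed that the negation-symmetry of $\mathsf{e^\pm_{r,s}}$ together with Theorem \ref{thm:dicentbasis} places it in $\Z_k(\DD_n)$.
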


\begin{proof}  Part (i) follows directly from \eqref{eq:gh}, and parts (ii) and (iii) can be verified by 
direct computation using the fact that if $\mathsf{r,s} \in {\kl}_\ac$,  then it cannot be the case that $\mathsf{r = -s}$.
Indeed, if  $\mathsf{r = -s}$,  then $\mathsf{-s = r \succ -r = s}$,  which contradicts that $\mathsf{s}$ belongs to ${\kl}_\ac$.   Part (iv)  can be deduced from the earlier parts  and the
fact that the maps $\mathsf{e^\pm_{r,s}}$ belong to $\Z_k(\DD_n)$, hence commute with
the $\DD_n$-action.  The argument follows the proof of  (iv) of Theorem \ref{thm:mj}
and is left to the reader. 
\end{proof} 

\begin{cor}\label{cor:matrixbasis} \begin{itemize} \item[{\rm (a)}]  The following set is a basis for $\Z_k(\DD_n)$ which gives its decomposition
into matrix summands:
\begin{equation} {\mathcal B}^k_{\mathsf{mat}}(\DD_n): = \bigcup_{\ac =1,\dots,n-1} \left \{ \mathsf{e_{r,s}} \mid \mathsf{r,s}
\in {\kl}_\ac\right \} \ \cup \  \bigcup_{\ac =0,n}\left \{ \mathsf{e^\pm_{t,u}} \mid \mathsf{t,u}
\in {\kl}_\ac\right \}, \end{equation} where  $\ell \in \Lambda_k^\bullet(\DD_n)$ {\rm (}see \eqref{eq:lamsDn}{\rm )};   
$\mathsf{e_{r,s}} =
\mathsf{E_{r,s} + E_{-r,-s}}$ for $\mathsf{r,s} \in {\kl}_\ac, \ \ac =1,\dots,n-1$, and ${\kl}_\ac$ is as in \eqref{eq:Kdef};  and 
$\mathsf{e^{\pm}_{t,u}} := \frac{1}{2}\Big((\mathsf{E_{t,u} + E_{-t,-u}}) \pm i^{\ac-k}
(\mathsf{E_{t,-u} + E_{-t,u}})\Big)$ for $\mathsf{t,u} \in {\kl}_\ac$,  $\ac= 0,n$,  and ${\kl}_\ac$ is as in \eqref{eq:K0ndef}.
\item[{\rm (b)}]  $\left \{z_\ell \mid  \ell \in \{1,\dots, n-1\}\right \} \cup \left \{z_{\ell}^{\pm} \mid \ell = 0,n \right\}$, \ $\ell \in \Lambda_k^{\bullet}(\DD_n)$,   is a 
basis for the center of $\Z_k(\DD_n)$,  where
\begin{eqnarray} z_\ell \  &=& \sum_{\mathsf{r} \in \kl_{\ac}, \ \mathsf{r \succ r'}}  \mathsf{e_{r,r}}\qquad \hbox{\rm for} \ \ \ell \in \{1,\dots, n-1\}, \\ 
z_\ell^\pm &=& \ \  \sum_{\mathsf{r} \in \kl_{\ac}}  \mathsf{e_{r,r}^\pm}
\qquad \hbox{\rm for} \ \ \ell \in \{0,n\}. \end{eqnarray} 
\end{itemize} 
\end{cor}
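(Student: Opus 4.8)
\textbf{Proof plan for Corollary \ref{cor:matrixbasis}.}

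The plan is to assemble part (a) directly from the three structural theorems that precede it, namely Theorems \ref{thm:mj} and \ref{thm:m0n}, together with the basis count of Theorem \ref{thm:dicentbasis}. First I would recall that by Theorem \ref{thm:dicentbasis}(b) the set $\{\mathsf{E_{r,s}} + \mathsf{E_{-r,-s}} \mid \mathsf{r} \succ -\mathsf{r}, \ \mathsf{\vert r\vert \equiv \vert s\vert} \modd n\}$ is a basis of $\Z_k(\DD_n)$. The constraint $\mathsf{\vert r\vert \equiv \vert s\vert} \modd n$ partitions the pairs according to the common residue $\ell$ of $k - 2\mathsf{\vert r\vert} \modd 2n$, which runs over $\Lambda_k^\bullet(\DD_n) \subseteq \{0,1,\dots,n\}$; so the basis splits as a disjoint union indexed by $\ell$, and each block has size $d_\ell^2$ where $d_\ell = \vert \kl_\ell\vert$. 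For $\ell \in \{1,\dots,n-1\}$ the elements $\mathsf{e_{r,s}} = \mathsf{E_{r,s}} + \mathsf{E_{-r,-s}}$ with $\mathsf{r,s} \in \kl_\ell$ are, by Theorem \ref{thm:mj}(iii), exactly $d_\ell^2$ matrix units spanning a copy of $\mathcal M_{d_\ell}(\CC)$; for $\ell \in \{0,n\}$ Theorem \ref{thm:m0n}(iii) shows the $\mathsf{e^\pm_{t,u}}$ with $\mathsf{t,u}\in \kl_\ell$ give two orthogonal matrix algebras $\mathcal M_{d_\ell}(\CC)^\pm$, contributing $2 d_\ell^2$ basis vectors — matching the fact that both $\DD_n^{(\ell)}$ and $\DD_n^{(\ell')}$ occur with the same multiplicity. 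I would then note that the change of basis from $\{\mathsf{e_{t,u}}, i^{\ell-k}(\mathsf{E_{t,-u}}+\mathsf{E_{-t,u}})\}$ to $\{\mathsf{e^\pm_{t,u}}\}$ is invertible, so the new set spans the same space. Summing over all $\ell \in \Lambda_k^\bullet(\DD_n)$ gives a spanning set of $\Z_k(\DD_n)$ whose cardinality equals $\sum_\ell (\text{appropriate }d_\ell^2)$; by Theorem \ref{thm:dicentbasis}(c) this equals $\dimm \Z_k(\DD_n)$, and since the matrix units in distinct blocks multiply to zero (the residues are distinct, so the supporting tuples are disjoint) linear independence follows from linear independence within each matrix block. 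Hence ${\mathcal B}^k_{\mathsf{mat}}(\DD_n)$ is a basis realizing the Wedderburn decomposition.

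For part (b), I would use the standard fact that the center of a direct sum of matrix algebras $\bigoplus_j \mathcal M_{d_j}(\CC)$ has as a basis the identity elements of the summands, each of which is the sum of the diagonal matrix units. For the blocks indexed by $\ell \in \{1,\dots,n-1\}$ the block identity is $z_\ell = \sum_{\mathsf{r}\in\kl_\ell} \mathsf{e_{r,r}}$, and for $\ell \in \{0,n\}$ the two block identities are $z_\ell^\pm = \sum_{\mathsf{r}\in\kl_\ell}\mathsf{e^\pm_{r,r}}$; that these are central and span the center is immediate from part (a) and the matrix-unit relations $\mathsf{e_{r,s}e_{t,u}} = \delta_{\mathsf{s,t}}\mathsf{e_{r,u}}$, $\mathsf{e^\pm_{r,s}e^\pm_{t,u}} = \delta_{\mathsf{s,t}}\mathsf{e^\pm_{r,u}}$, $\mathsf{e^\pm_{r,s}e^\mp_{t,u}}=0$ established in Theorems \ref{thm:mj} and \ref{thm:m0n}.

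The only mild subtlety — and the step I would be most careful about — is the bookkeeping of \emph{which} residues $\ell$ actually contribute and with what multiplicity: one must check that every $\mathsf{E_{r,s}}+\mathsf{E_{-r,-s}}$ in the Theorem \ref{thm:dicentbasis} basis lands in exactly one block (this is where $\mathsf{r}\succ -\mathsf{r}$ and $k-\ell$ even are used, to avoid double-counting and to rule out $\ell$ with $k-\ell$ odd), and that for $\ell\in\{0,n\}$ one genuinely gets two summands rather than one. Once that accounting is pinned down, the dimension count from Theorem \ref{thm:dicentbasis}(c) forces the spanning set to be a basis, and no further computation is needed.
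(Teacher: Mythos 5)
The paper gives no explicit proof of this corollary; it is presented as a direct consequence of Theorems \ref{thm:dicentbasis}, \ref{thm:mj}, and \ref{thm:m0n}, which is exactly the route you take. Your plan — split $\mathcal B^k(\DD_n)$ by the residue $\ell$, identify the $\mathsf{e_{r,s}}$ and $\mathsf{e^\pm_{t,u}}$ as matrix units in the respective blocks via Theorems \ref{thm:mj}(iii) and \ref{thm:m0n}(iii), observe that blocks with distinct residues are orthogonal, match the total count against Theorem \ref{thm:dicentbasis}(c), and for part (b) take block identities — is the intended argument.

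One place where the "bookkeeping" you flag genuinely needs care, and where your sketch is a little too casual: $\mathcal B^k_{\mathsf{mat}}$ is \emph{not} literally a subset of $\mathcal B^k$. In $\mathcal B^k$ only the first index is normalized by $\mathsf{r \succ -r}$, while in $\mathcal B^k_{\mathsf{mat}}$ both indices of a matrix unit must range over $\kl_\ell$. For $\ell\in\{1,\dots,n-1\}$ this works out because an element $\mathsf{e_{r,s}}$ with $\mathsf{r,s}\in\kl_\ell$ equals $\mathsf{e_{-r,-s}}$, and exactly one of $(\mathsf{r,s})$, $(-\mathsf{r},-\mathsf{s})$ has first coordinate $\succ$ its negative; conversely an element of $\mathcal B^k$ whose representative $\mathsf{r}$ satisfies $k-2|\mathsf{r}|\equiv -\ell \bmod 2n$ must be re-indexed by $(-\mathsf{r},-\mathsf{s})$ to land in $\kl_\ell^2$. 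This $\ell \leftrightarrow 2n-\ell$ re-indexing is invisible in your "the residue runs over $\Lambda_k^\bullet(\DD_n)$" phrasing. For $\ell\in\{0,n\}$, as you say, the $\mathsf{e^\pm_{t,u}}$ are an invertible change of basis of the two-dimensional span of $\mathsf{e_{t,u}}$ and $\mathsf{e_{t,-u}}$, and the $\succ$ condition in \eqref{eq:K0ndef} prevents double-counting since now both $\mathsf{r}$ and $-\mathsf{r}$ have the same residue. Finally, your claim that "by Theorem \ref{thm:dicentbasis}(c) this equals $\dimm \Z_k(\DD_n)$" hides an identity ($\sum_{\ell\in\{1,\dots,n-1\}} d_\ell^2 + 2\sum_{\ell\in\{0,n\}} d_\ell^2 = \tfrac12\dimm \Z_k(\C_{2n})$) that is not stated in (c) and has to be verified separately; the cleaner alternative, which you also gesture at, is to show the two sets span each other element-by-element and then compare cardinalities. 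Part (b) is fine as you state it.
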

 
\begin{subsection}{Example $\VV^{\ot 4}$} \end{subsection}
 
 \begin{example}\label{ex:di1}{\rm In this example, we decompose $\VV^{\otimes 4}$  for  $n \geq 4$.  
 
 {\it First, suppose $n \geq 5$.} \ \    Then using the results in Proposition \ref{prop:Vtens},  we have  
 \[\VV^{\ot 4}  =  3 \cdot \DD_n^{(0')} \oplus 3 \cdot \DD_n^{(0)} \oplus 4 \cdot \DD_n^{(2)} \oplus \DD_n^{(4)}.\]
 Correspondingly,  $\ZZ_k({\DD_n})$ decomposes into matrix blocks according to 
 \[\ZZ_k(\DD_n)\cong \mathcal M_{3}(\mathbb C) \oplus \mathcal M_{3}(\mathbb C) \oplus
 \mathcal M_{4}(\mathbb C) \oplus  \mathcal M_{1}(\mathbb C),\] and $\dimm \Z_k(\DD_n) = 3^2 + 3^2 + 4^2 + 1^2 = 35$
 as in Example \ref{ex:dih2}.   More explicitly we have for $n \geq 5$ the following: 
\begin{itemize}
\item[{\rm (i)}]  Let $\ac=k=4$,  and set  $\mathsf{q} = (1,1,1,1)$.   Then $i^{\ac-k} = 1$, so  $\{\vf_{\mathsf {-q}}, \vf_{\mathsf q}\}$ gives a standard basis for a copy of $\DD_n^{(4)}$,
and $\{\mathsf{e_{q,q}}:= \mathsf{E_{q,q}} + \mathsf{E_{-q,-q}}\}$ is a basis for  $\mathcal M_{1}(\mathbb C)$.  
\item[{\rm (ii)}]   Let $\ac =2$, and set  $\mathsf{r} = (1,1,1,-1)$,  $\mathsf{s} = (1,1,-1,1)$, $\mathsf{t} = (1,-1,1,1)$, and $\mathsf{u} = (-1,1,1,1)$.
Then the pair  $\{-\vf_{{-\lambda}}, \vf_{{ \lambda}}\}$ for ${\lambda} = \mathsf{r,s,t,u}$ determines a standard basis
for a copy of $\DD_n^{(2)}$. The maps  $\mathsf{e}_{{\lambda,\mu}} := \mathsf{E_{{\lambda,\mu}} +
E_{{-\lambda,-\mu}}}$ for ${\lambda, \mu} \in \{ \mathsf{r,s,t,u}\}$ give a matrix unit basis for $\mathcal M_4(\mathbb C)$.
\item[{\rm (iii)}]  Let $\ac =0$, and set   $\mathsf{v} = (1,1,-1,-1)$,  $\mathsf{w} = (1,-1,1,-1)$, and $\mathsf{x} = (1,-1,-1,1)$.  Then since $i^{\ac-k} = 1$,  
 the vector $\vf_{ \lambda} - \vf_{-\lambda}$
gives a basis for a copy of $\DD_n^{(0')}$ for $\lambda \in \{\mathsf{v,w,x}\}$.     The transformations $\mathsf{e_{\lambda,\mu}^- = \frac{1}{2}\Big( (\mathsf{E_{{\lambda,\mu}} +
E_{{-\lambda,-\mu}}}) - (E_{{\lambda,-\mu}}}
+ \mathsf{E_{{-\lambda,\mu}}}) \Big)$ for ${ \lambda,\mu} \in \{\mathsf{v,w,x}\}$ form a matrix
unit basis for $\mathcal M_3(\mathbb C)^- \cong \mathcal M_3(\mathbb C)$ (in the notation of Theorem \ref{thm:m0n}).  
Similarly,  the vector $\vf_{ \lambda} + \vf_{- \lambda}$
gives a basis for a copy of $\DD_n^{(0)}$, and the maps 
 $\mathsf{e_{\lambda,\mu}^+ = \frac{1}{2}\Big((\mathsf{E_{{\lambda,\mu}} +
E_{{-\lambda,-\mu}}})+(E_{{\lambda,-\mu}}}
+ \mathsf{E_{{-\lambda,\mu}}})\Big)$ for ${ \lambda,\mu} \in \{\mathsf{v,w,x}\}$ form a matrix
unit basis for $\mathcal M_3(\mathbb C)^+ \cong \mathcal M_3(\mathbb C)$.  
  
\end{itemize}  In (i) and (ii),  the space $\mathsf {B}_{4}^{(\ac)} = \spann_\mathbb C\{\vf_{\pm \lambda}\}$, as $\lambda$ ranges over the appropriate indices, is an irreducible  bimodule for $\DD_n \times \ZZ_k(\DD_n)$.  
In {\rm (iii)}, $\mathsf {B}_{4}^{(0)} = \spann_\mathbb C\{\vf_\lambda + \vf_{- \lambda}\}$ and 
 $\mathsf {B}_{4}^{(0')} = \spann_\mathbb C\{\vf_\lambda - \vf_{- \lambda}\}$
are irreducible  $(\DD_n \times \ZZ_k(\DD_n))$-bimodules in $\VV^{\ot 4}$. Therefore,
as a  $\DD_n \times \ZZ_k(\DD_n)$-bimodule  $\VV^{\ot 4}$ decomposes into irreducible bimodules  of dimensions $2,8,3,3$.

{\it Suppose now that $n = 4$.} \ \   Then $\VV^{\ot 4}  = 3 \cdot \DD_4^{(0')} \oplus 3 \cdot \DD_4^{(0)} \oplus 4 \cdot \DD_4^{(2)} \oplus \DD_4^{(4)} \oplus  \DD_4^{(4')},$  and  $\ZZ_k(\DD_4) \cong \mathcal M_{3}(\mathbb C) \oplus \mathcal M_{3}(\mathbb C) \oplus
 \mathcal M_{4}(\mathbb C) \oplus  \mathcal M_{1}(\mathbb C) \oplus  \mathcal M_{1}(\mathbb C)$, which has
 dimension 36 (compare Example \ref{ex:dih2}). 
The only change is in {\rm (i)},   where $\{\vf_{\mathsf q} -\vf_{-\mathsf q}\}$ is a basis for a copy of   $\DD_4^{(4')}$
and $\{\vf_{\mathsf q}+\vf_{-\mathsf q}\}$ is a basis for a copy of  $ \DD_4^{(4)}$.  In the first case,
$\mathsf{e^-_{q,q}} = \frac{1}{2}\Big((\mathsf{E_{q,q}+E_{-q,-q}}) - (\mathsf{E_{q,-q}+E_{-q,q}})\Big)$ gives a basis for $\mathcal M_1(\mathbb C)^-$, while in the 
second,  $\mathsf{e^+_{q,q}} = \frac{1}{2}\Big((\mathsf{E_{q,q}+E_{-q,-q}})+(\mathsf{E_{q,-q}+E_{-q,q}})\Big)$
for $\mathcal M_1(\mathbb C)^+$.  } \end{example}

\begin{subsection}{A diagram basis for $\DD_n$} \end{subsection}
The basis $\{\mathsf{E_{r,s}+E_{-r,-s} \mid r,s\in \{-1,1\}^k, r \succ -r,  \vert r \vert \equiv \vert s\vert} \modd n\}$ of matrix units can be viewed diagrammatically.  
If $\rf = (r_1, \ldots, r_k) \in \{-1,1\}^k$ and $\sm = (s_{1'}, \ldots, s_{k'}) \in \{-1,1\}^k$, let $d_\mathsf{r,s}$ be the diagram on two rows of $k$ vertices labeled by $1, 2, \ldots, k$ on top and $1', 2', \ldots, k'$ on bottom.  Mark the $i$th vertex on top with ${\bf +}$ if $r_i = 1$ and with ${\bf -}$ if $r_i = -1$. Similarly, mark the $j'$th vertex on the bottom with ${\bf +}$ if $s_j =  1$ and with ${\bf -}$ if $s_j = -1$.  The positive and negative vertices correspond to a set partition of $\{1,\dots, k, 1',\dots, k'\}$ into two parts.  Draw edges in the diagram so that the ${\bf +}$ vertices form a connected component and the ${\bf -}$ vertices form a connected component.  For example, if 
\[\rf = (-1,-1,1,-1,-1,1,1,-1) \quad \hbox{\rm and} \quad \sm = (1,-1,-1,-1,1,-1,1, -1),\]  then  the corresponding diagram and set partition are
\[
\begin{array}{c} \begin{tikzpicture}[scale=.65,line width=1pt]
\foreach \i in {1,...,8}  { \path (\i,1) coordinate (T\i); \path (\i,-1)
coordinate (B\i); }
\filldraw[fill= black!10,draw=black!10,line width=4pt]  (T1) -- (T8) --
(B8) -- (B1) -- (T1);
\draw[black,line width = 2 pt,fill=black!60, fill opacity=0.5]   
(B1) -- (T3) .. controls +(-90:0.5) and +(-90:0.5) ..  
(T6) .. controls +(-90:0.5) and +(-90:0.5) .. (T7)  -- 
(B7).. controls +(90:0.5) and +(90:0.75) ..  (B5)
.. controls +(90:0.75) and +(90:0.5) ..  (B1);
\draw[black,line width = 2 pt,fill=black!60,fill opacity=0.5]   
(T1) .. controls +(-90:1.00) and +(-90:1.00) ..  
(T2) .. controls +(-90:1.00) and +(-90:1.00) .. 
(T4) .. controls +(-90:1.00) and +(-90:1.00) .. 
(T5)  .. controls +(-90:1.15) and +(-90:1.15) .. 
(T8) -- 
(B8)   .. controls +(90:1.00) and +(90:1.00) .. 
(B6) .. controls +(90:1.00) and +(90:1.00) .. 
(B4) .. controls +(90:1.00) and +(90:1.00) .. 
(B3) .. controls +(90:1.00) and +(90:1.00) .. 
(B2) .. controls +(90:1.00) and +(-90:1.00) .. (T1);
\foreach \i in {1,...,8}  { \fill (T\i) circle (6pt); \fill (B\i) circle
(6pt); }
\draw  (T1)  node[above=0.1cm]{$\scriptstyle{1}$};\draw  (B1)  node[below=0.1cm]{$\scriptstyle{1'}$};
\draw  (T2)  node[above=0.1cm]{$\scriptstyle{2}$};\draw  (B2)  node[below=0.1cm]{$\scriptstyle{2'}$};
\draw  (T3)  node[above=0.1cm]{$\scriptstyle{3}$};\draw  (B3)  node[below=0.1cm]{$\scriptstyle{3'}$};
\draw  (T4)  node[above=0.1cm]{$\scriptstyle{4}$};\draw  (B4)  node[below=0.1cm]{$\scriptstyle{4'}$};
\draw  (T5)  node[above=0.1cm]{$\scriptstyle{5}$};\draw  (B5)  node[below=0.1cm]{$\scriptstyle{5'}$};
\draw  (T6)  node[above=0.1cm]{$\scriptstyle{6}$};\draw  (B6)  node[below=0.1cm]{$\scriptstyle{6'}$};
\draw  (T7)  node[above=0.1cm]{$\scriptstyle{7}$};\draw  (B7)  node[below=0.1cm]{$\scriptstyle{7'}$};
\draw  (T8)  node[above=0.1cm]{$\scriptstyle{8}$};\draw  (B8)  node[below=0.1cm]{$\scriptstyle{8'}$};
\draw (-.25,0) node {$\mathsf{d}_{\rf,\sm}=$};
\draw (T1) node[white] {\bf -}; \draw (T2) node[white] {\bf -}; \draw (T3)
node[white] {\bf +}; \draw (T4) node[white] {\bf -}; \draw (T5) node[white]
{\bf -}; \draw (T6) node[white] {\bf +}; \draw (T7) node[white] {\bf +};
\draw (T8) node[white] {\bf -};\draw (B1) node[white] {\bf +};\draw (B2) node[white] {\bf -};\draw (B3)
node[white] {\bf -};\draw (B4) node[white] {\bf -};\draw (B5) node[white]
{\bf +};\draw (B6) node[white] {\bf -};
\draw (B7) node[white] {\bf +};\draw (B8) node[white] {\bf -}; 
\end{tikzpicture}  \end{array}
= \bigg\{ \{1,2,4,5,8,2',3',4',6',8'\}, \{3,6,7,1',5',7'\}\bigg\}.
\]
Two diagrams are equivalent if they correspond to the same underlying set partition.  Furthermore, switching the $+$ signs  and $-$ signs give the same diagram, since
$\mathsf{d}_{\rf,\sm} = \mathsf{d}_{-\rf,-\sm}$.

If $d$ is any diagram corresponding to a set partition of $\{1, \ldots, k, 1', \ldots, k'\}$ into two parts, then $d$ acts on  $\VV^{\otimes k}$ by $d.\v_{\mathsf u} = 
\sum_{\mathsf t \in \{-1,1\}^k}  d^{\mathsf{t}}_{\mathsf{u}} \, \v_{\mathsf t}$ for $\mathsf{t} = (u_1,\dots, u_k)$ and
$\mathsf{u} = (u_{1'},\dots, u_{k'}) \in \{-1,1\}^k$ with
\begin{equation}
d^{\mathsf{t}}_{\mathsf u} 
= \begin{cases}
1 &\quad \hbox{if $u_a = u_b$  \hbox{\it if and only if} $a$ and $b$ are in the same block of $d$,}
\\ 
0 &\quad \hbox{otherwise}
\end{cases}
\end{equation}
for  $a,b \in \{1,\dots,k, 1', \dots, k'\}$.  
In this notation $d_{\mathsf{u}}^{\mathsf{t}}$ denotes the $({\mathsf{t}},{\mathsf{u}})$-entry of the matrix of $d$ on $\VV^{\otimes k}$ with respect to the basis of simple tensors.
 It follows from this construction that
$  \mathsf{e}_\mathsf{r,s}=\mathsf{E}_{\rf,\sm} + \mathsf{E}_{-\rf,-\sm}$  and  the set partition diagram $ \mathsf{d}_{\rf,\sm}
$  have the same action on $\VV^{\ot k}$ and so are equal. 
Note that in the example above, we have
\[
\mathsf{d}_{\rf,\sm}.(\v_a \otimes \v_b \otimes \v_b \otimes \v_b \otimes \v_a \otimes \v_b \otimes \v_a \otimes \v_b) =
\v_b \otimes \v_b \otimes \v_a \otimes \v_b \otimes \v_b \otimes \v_a \otimes \v_a \otimes \v_b,
\]
for $\{a,b\} = \{-1,1\}$ and $\mathsf{d}_{\rf,\sm}$ acts as 0 on all other simple tensors.

The  diagrams multiply as matrix units.  Let $b(d)$ and $t(d)$ denote the set partitions imposed by $d$ on the bottom and top rows of $d$, respectively. If $d_1$ and $d_2$ are diagrams,  then
\[
d_1 d_2 = \delta_{b(d_1),t(d_2)} d_3,
\]
where $d_3$ is the  diagram given by placing $d_1$ on top of $d_2$, identifying $b(d_1)$ with $t(d_2)$, and taking $d_3$ to be the connected components of the resulting diagram.  For example,
\[
\begin{array}{c}
\begin{tikzpicture}[scale=.6,line width=1pt]
\foreach \i in {1,...,8}  { \path (\i,1) coordinate (T\i); \path (\i,-1)
coordinate (B\i); }
\filldraw[fill= black!10,draw=black!10,line width=4pt]  (T1) -- (T8) --
(B8) -- (B1) -- (T1);
\draw[black,line width = 2 pt,fill=black!60, fill opacity=0.5]   
(B1) -- (T3) .. controls +(-90:0.5) and +(-90:0.5) ..  
(T6) .. controls +(-90:0.5) and +(-90:0.5) .. (T7)  -- 
(B7).. controls +(90:0.5) and +(90:0.75) ..  (B5)
.. controls +(90:0.75) and +(90:0.5) ..  (B1);
\draw[black,line width = 2 pt,fill=black!60,fill opacity=0.5]   
(T1) .. controls +(-90:1.00) and +(-90:1.00) ..  
(T2) .. controls +(-90:1.00) and +(-90:1.00) .. 
(T4) .. controls +(-90:1.00) and +(-90:1.00) .. 
(T5)  .. controls +(-90:1.15) and +(-90:1.15) .. 
(T8) -- 
(B8)   .. controls +(90:1.00) and +(90:1.00) .. 
(B6) .. controls +(90:1.00) and +(90:1.00) .. 
(B4) .. controls +(90:1.00) and +(90:1.00) .. 
(B3) .. controls +(90:1.00) and +(90:1.00) .. 
(B2) .. controls +(90:1.00) and +(-90:1.00) .. (T1);
\foreach \i in {1,...,8}  { \fill (T\i) circle (6pt); \fill (B\i) circle
(6pt); }
\draw (-.25,0) node {$d_1=$};
\draw (T1) node[white] {\bf -}; \draw (T2) node[white] {\bf -}; \draw (T3)
node[white] {\bf +}; \draw (T4) node[white] {\bf -}; \draw (T5) node[white]
{\bf -}; \draw (T6) node[white] {\bf +}; \draw (T7) node[white] {\bf +};
\draw (T8) node[white] {\bf -};\draw (B1) node[white] {\bf +};\draw (B2) node[white] {\bf -};\draw (B3)
node[white] {\bf -};\draw (B4) node[white] {\bf -};\draw (B5) node[white]
{\bf +};\draw (B6) node[white] {\bf -};
\draw (B7) node[white] {\bf +};\draw (B8) node[white] {\bf -}; 
\end{tikzpicture}  \\
 \begin{tikzpicture}[scale=.6,line width=1pt]
\foreach \i in {1,...,8}  { \path (\i,1) coordinate (T\i); \path (\i,-1)
coordinate (B\i); }
\filldraw[fill= black!10,draw=black!10,line width=4pt]  (T1) -- (T8) --
(B8) -- (B1) -- (T1);
\draw[black,line width = 2 pt,fill=black!60,fill opacity = 0.5]
(T1)-- (B1) .. controls +(90:0.85) and +(90:0.85) .. (B2).. controls +(90:0.85) and +(90:0.85) .. (B4)
.. controls +(90:0.5) and +(90:0.5) .. (B5) .. controls +(90:0.85) and +(90:0.85) .. (B7)
.. controls +(90:0.5) and +(90:0.5) .. (B8) .. controls +(90:0.85) and +(-90:0.85) .. (T7)
.. controls +(-90:0.5) and +(-90:0.5) .. (T5) .. controls +(-90:1.2) and +(-90:1.2) .. (T1);
\draw[black,line width = 2 pt,fill=black!60,fill opacity = 0.5]
(B3) .. controls +(+90:0.85) and +(+90:0.85) .. (B6)  .. controls +(45:0.85)
and +(-90:0.85) .. (T8)   .. controls
+(-90:0.85) and +(-90:1.20) .. (T6)
.. controls +(-90:1.20) and +(-90:0.85) .. (T4) .. controls +(-90:0.5) and
+(-90:0.5) .. (T3) .. controls +(-90:0.5) and +(-90:0.5) .. (T2) .. controls
+(-90:0.85) and +(+90:0.85) .. (B3);
\foreach \i in {1,...,8}  { \fill (T\i) circle (6pt); \fill (B\i) circle
(6pt); }
\draw (-.25,0) node {$d_2=$};
\draw (T1) node[white] {\bf -}; \draw (T2) node[white] {\bf +}; \draw (T3)
node[white] {\bf +}; \draw (T4) node[white] {\bf +}; \draw (T5) node[white]
{\bf -}; \draw (T6) node[white] {\bf +}; \draw (T7) node[white] {\bf -};
\draw (T8) node[white] {\bf +};\draw (B1) node[white] {\bf -};\draw (B2) node[white] {\bf -};\draw (B3)
node[white] {\bf +};\draw (B4) node[white] {\bf -};\draw (B5) node[white]
{\bf -};\draw (B6) node[white] {\bf +};
\draw (B7) node[white] {\bf -};\draw (B8) node[white] {\bf -}; 
\end{tikzpicture}
\end{array}
=
\begin{array}{c}
 \begin{tikzpicture}[scale=.6,line width=1pt]
\foreach \i in {1,...,8}  { \path (\i,1) coordinate (T\i); \path (\i,-1)
coordinate (B\i); }
\filldraw[fill= black!10,draw=black!10,line width=4pt]  (T1) -- (T8) --
(B8) -- (B1) -- (T1);
\draw[black,line width = 2 pt,fill=black!60,fill opacity=0.5]
(T3) .. controls +(-120:0.85) and +(90:1.5) .. (B1) .. controls +(90:0.85) and +(90:0.85) .. (B2).. controls +(90:0.85) and +(90:0.85) .. (B4)
.. controls +(90:0.5) and +(90:0.5) .. (B5) .. controls +(90:0.85) and +(90:0.85) .. (B7)
.. controls +(90:0.5) and +(90:0.5) .. (B8) .. controls +(90:0.85) and +(-90:0.85) .. (T7)
.. controls +(-90:0.5) and +(-90:0.5) .. (T6) .. controls +(-90:0.5) and +(-90:0.5) .. (T3);
\draw[black,line width = 2 pt,fill=black!60,fill opacity=0.5]
(T1) .. controls +(-90:0.5)
and +(-90:0.85) ..  (T2) .. controls +(-90:0.85) and +(-90:0.85) .. (T4) ..
controls +(-90:1.0) and +(-90:1.0) .. (T5)  .. controls +(-90:1.15) and
+(-120:1.15) .. (T8).. controls +(-90:0.85) and +(45:0.85) .. (B6)   ..
controls
+(90:0.85) and +(90:0.85) ..  (B3)  .. controls
+(90:0.85) and +(-90:0.85) .. (T1);
\foreach \i in {1,...,8}  { \fill (T\i) circle (6pt); \fill (B\i) circle
(6pt); }
\draw (9.25,0) node {$=d_3$.};
\draw (T1) node[white] {\bf -}; \draw (T2) node[white] {\bf -}; \draw (T3)
node[white] {\bf +}; \draw (T4) node[white] {\bf -}; \draw (T5) node[white]
{\bf -}; \draw (T6) node[white] {\bf +}; \draw (T7) node[white] {\bf +};
\draw (T8) node[white] {\bf -};\draw (B1) node[white] {\bf +};\draw (B2) node[white] {\bf +};\draw (B3)
node[white] {\bf -};\draw (B4) node[white] {\bf +};\draw (B5) node[white]
{\bf +};\draw (B6) node[white] {\bf -};\draw (B7) node[white] {\bf +};\draw (B8) node[white] {\bf +}; 
\end{tikzpicture}
\end{array}
\]
If the set partitions in the bottom of $d_1$ do not match \emph{exactly} with the set partitions of the top of $d_2$ then this product is 0. Note that the set partitions must match, but the $+$ and $-$ labels might be 
reversed.

  In order for a set partition diagram  $\mathsf{d}_{\rf,\sm}$ to belong to the centralizer algebra $\Z_k(\DD_n)$, we must have  $\mathsf{r,s} \in \mathsf{K}_\ell$.   Now  $\mathsf{r,s} \in \mathsf{K}_\ell$ if and only if $|\rf| \equiv |\sm| \equiv \frac{1}{2}(k-\ell) \modd n$,  which is equivalent to saying 
\begin{equation}
t(B) \equiv b(B) \modd n \quad\hbox{ for each block $B$ in $\mathsf{d}_{\rf,\sm}$},\end{equation} 
where $t(B) = |B \cap \{1, \ldots, k\}|$ and $b(B) =| B \cap \{1', \ldots, k'\}|$.
\medskip

\begin{example} Recall from Example \ref{ex:dih2} that $\dimm \Z_4(\DD_4) = 36$.   The diagrammatic  basis is given by diagrams on two rows of 4 vertices each that partition the vertices into $\le 2$ blocks $B$ that satisfy $t(B) \equiv b(B)\modd 4.$  They are  
\begin{enumerate}
\item[(a)]
$
\begin{array}{c}
 \begin{tikzpicture}[scale=.5,line width=1pt]
\foreach \i in {1,...,4}  { \path (\i,1) coordinate (T\i); \path (\i,-.5) coordinate (B\i); }
\filldraw[fill= black!10,draw=black!10,line width=4pt]  (T1) -- (T4) --(B4) -- (B1) -- (T1);
\draw[black,line width = 2 pt,fill=black!60,fill opacity=0.5]
(T1) .. controls +(-90:0.65) and +(-90:0.65) ..  (T2) .. controls +(-90:0.65) and +(-90:0.65) .. (T3) .. controls +(-90:0.65) and +(-90:0.65) .. (T4) -- (B4)
.. controls +(+90:0.65) and +(+90:0.65) ..  (B3) .. controls +(+90:0.65) and +(+90:0.65) ..  (B2) .. controls +(+90:0.65) and +(+90:0.65) ..  (B1) -- (T1);
\foreach \i in {1,...,4}  { \fill (T\i) circle (5pt); \fill (B\i) circle (5pt); }
\end{tikzpicture}
\end{array}
$ has one block $B$ with $t(B) = b(B) = 4.$
\item[(b)]
$
\begin{array}{c}
 \begin{tikzpicture}[scale=.5,line width=1pt]
\foreach \i in {1,...,4}  { \path (\i,1) coordinate (T\i); \path (\i,-.5) coordinate (B\i); }
\filldraw[fill= black!10,draw=black!10,line width=4pt]  (T1) -- (T4) --(B4) -- (B1) -- (T1);
\draw[black,line width = 2 pt]
(T1) .. controls +(-90:0.65) and +(-90:0.65) ..  (T2) .. controls +(-90:0.65) and +(-90:0.65) .. (T3) .. controls +(-90:0.65) and +(-90:0.65) .. (T4);
\draw[black,line width = 2 pt] 
(B4) .. controls +(+90:0.65) and +(+90:0.65) ..  (B3) .. controls +(+90:0.65) and +(+90:0.65) ..  (B2) .. controls +(+90:0.65) and +(+90:0.65) ..  (B1);
\foreach \i in {1,...,4}  { \fill (T\i) circle (5pt); \fill (B\i) circle (5pt); }
\end{tikzpicture}
\end{array}
$ has  a top block $B_1$ with  $t(B_1) = 4 \equiv 0  = b(B_1) \modd 4$ and a bottom block $B_2$ with $t(B_2) = 0 \equiv 4  = b(B_2) \modd 4$.
\item[(c)] There are 16 diagrams with two blocks $d = B_1 \sqcup B_2$ in which $B_1$ has 3 vertices in each row and $B_2$ has 1 vertex in each row:
\[
\begin{array}{c}
 \begin{tikzpicture}[scale=.5,line width=1pt]
\foreach \i in {1,...,4}  { \path (\i,1) coordinate (T\i); \path (\i,-.5) coordinate (B\i); }
\filldraw[fill= black!10,draw=black!10,line width=4pt]  (T1) -- (T4) --(B4) -- (B1) -- (T1);
\draw[black,line width = 2 pt,fill=black!60,fill opacity=0.5]
(T1) .. controls +(-90:0.65) and +(-90:0.65) ..  (T2) .. controls +(-90:0.65) and +(-90:0.65) .. (T3)  -- (B3)
.. controls +(+90:0.65) and +(+90:0.65) ..  (B2) .. controls +(+90:0.65) and +(+90:0.65) ..  (B1) -- (T1);
\draw[black,line width = 2 pt] (T4) -- (B4);
\foreach \i in {1,...,4}  { \fill (T\i) circle (5pt); \fill (B\i) circle (5pt); }
\end{tikzpicture}
\end{array}, \quad
\begin{array}{c}
 \begin{tikzpicture}[scale=.5,line width=1pt]
\foreach \i in {1,...,4}  { \path (\i,1) coordinate (T\i); \path (\i,-.5) coordinate (B\i); }
\filldraw[fill= black!10,draw=black!10,line width=4pt]  (T1) -- (T4) --(B4) -- (B1) -- (T1);
\draw[black,line width = 2 pt,fill=black!60,fill opacity=0.5]
(T1) .. controls +(-90:0.65) and +(-90:0.65) ..  (T2) .. controls +(-90:0.65) and +(-90:0.65) .. (T4)  .. controls +(-90:0.65) and +(+90:0.65) .. (B3)
.. controls +(+90:0.65) and +(+90:0.65) ..  (B2) .. controls +(+90:0.65) and +(+90:0.65) ..  (B1) -- (T1);
\draw[black,line width = 2 pt] (T3) .. controls +(-90:0.65) and +(+90:0.65) .. (B4);
\foreach \i in {1,...,4}  { \fill (T\i) circle (5pt); \fill (B\i) circle (5pt); }
\end{tikzpicture}
\end{array},
\quad
\begin{array}{c}
 \begin{tikzpicture}[scale=.5,line width=1pt]
\foreach \i in {1,...,4}  { \path (\i,1) coordinate (T\i); \path (\i,-.5) coordinate (B\i); }
\filldraw[fill= black!10,draw=black!10,line width=4pt]  (T1) -- (T4) --(B4) -- (B1) -- (T1);
\draw[black,line width = 2 pt,fill=black!60,fill opacity=0.5]
(T1) .. controls +(-90:0.65) and +(-90:0.65) ..  (T3) .. controls +(-90:0.65) and +(-90:0.65) .. (T4)  .. controls +(-90:0.65) and +(+90:0.65) .. (B3)
.. controls +(+90:0.65) and +(+90:0.65) ..  (B2) .. controls +(+90:0.65) and +(+90:0.65) ..  (B1) -- (T1);
\draw[black,line width = 2 pt] (T2) .. controls +(-90:0.65) and +(+90:0.65) .. (B4);
\foreach \i in {1,...,4}  { \fill (T\i) circle (5pt); \fill (B\i) circle (5pt); }
\end{tikzpicture}
\end{array},
\quad
\begin{array}{c}
 \begin{tikzpicture}[scale=.5,line width=1pt]
\foreach \i in {1,...,4}  { \path (\i,1) coordinate (T\i); \path (\i,-.5) coordinate (B\i); }
\filldraw[fill= black!10,draw=black!10,line width=4pt]  (T1) -- (T4) --(B4) -- (B1) -- (T1);
\draw[black,line width = 2 pt,fill=black!60,fill opacity=0.5]
(T2) .. controls +(-90:0.65) and +(-90:0.65) ..  (T3) .. controls +(-90:0.65) and +(-90:0.65) .. (T4)  .. controls +(-90:0.65) and +(+90:0.65) .. (B3)
.. controls +(+90:0.65) and +(+90:0.65) ..  (B2) .. controls +(+90:0.65) and +(+90:0.65) ..  (B1) .. controls +(90:0.65) and +(-90:0.65) .. (T2);
\draw[black,line width = 2 pt] (T1) .. controls +(-90:0.65) and +(+90:0.65) .. (B4);
\foreach \i in {1,...,4}  { \fill (T\i) circle (5pt); \fill (B\i) circle (5pt); }
\end{tikzpicture}
\end{array},
\ldots \text{(12 more)} \ldots
\]
\item[(d)] There are 18 diagrams with two blocks  $d = B_1 \sqcup B_2$ in which both blocks have 2 vertices in each row:
\[
\begin{array}{c}
 \begin{tikzpicture}[scale=.5,line width=1pt]
\foreach \i in {1,...,4}  { \path (\i,1) coordinate (T\i); \path (\i,-.5) coordinate (B\i); }
\filldraw[fill= black!10,draw=black!10,line width=4pt]  (T1) -- (T4) --(B4) -- (B1) -- (T1);
\draw[black,line width = 2 pt,fill=black!60,fill opacity=0.5]
(T1) .. controls +(-90:0.65) and +(-90:0.65) ..  (T2) .. controls +(-90:0.65) and +(+90:0.65) .. (B2)
.. controls +(+90:0.5) and +(+90:0.5) ..  (B1) .. controls +(+90:0.65) and +(-90:0.65) ..  (T1);
\draw[black,line width = 2 pt,fill=black!60,fill opacity=0.5]
(T3) .. controls +(-90:0.65) and +(-90:0.65) ..  (T4) .. controls +(-90:0.65) and +(+90:0.65) .. (B4)
.. controls +(+90:0.5) and +(+90:0.5) ..  (B3) .. controls +(+90:0.65) and +(-90:0.65) ..  (T3);
\foreach \i in {1,...,4}  { \fill (T\i) circle (5pt); \fill (B\i) circle (5pt); }
\end{tikzpicture}
\end{array}, \quad
\begin{array}{c}
 \begin{tikzpicture}[scale=.5,line width=1pt]
\foreach \i in {1,...,4}  { \path (\i,1) coordinate (T\i); \path (\i,-.5) coordinate (B\i); }
\filldraw[fill= black!10,draw=black!10,line width=4pt]  (T1) -- (T4) --(B4) -- (B1) -- (T1);
\draw[black,line width = 2 pt,fill=black!60,fill opacity=0.5]
(T1) .. controls +(-90:0.5) and +(-90:0.5) ..  (T3) .. controls +(-90:0.65) and +(+90:0.65) .. (B2)
.. controls +(+90:0.5) and +(+90:0.5) ..  (B1) .. controls +(+90:0.65) and +(-90:0.65) ..  (T1);
\draw[black,line width = 2 pt,fill=black!60,fill opacity=0.5]
(T2) .. controls +(-90:0.85) and +(-90:0.65) ..  (T4) .. controls +(-90:0.65) and +(+90:0.65) .. (B4)
.. controls +(+90:0.5) and +(+90:0.5) ..  (B3) .. controls +(+90:0.65) and +(-90:0.65) ..  (T2);
\foreach \i in {1,...,4}  { \fill (T\i) circle (5pt); \fill (B\i) circle (5pt); }
\end{tikzpicture}
\end{array}, \quad
\begin{array}{c}
 \begin{tikzpicture}[scale=.5,line width=1pt]
\foreach \i in {1,...,4}  { \path (\i,1) coordinate (T\i); \path (\i,-.5) coordinate (B\i); }
\filldraw[fill= black!10,draw=black!10,line width=4pt]  (T1) -- (T4) --(B4) -- (B1) -- (T1);
\draw[black,line width = 2 pt,fill=black!60,fill opacity=0.5]
(T1) .. controls +(-90:0.85) and +(-90:0.85) ..  (T4) .. controls +(-90:0.65) and +(+90:0.65) .. (B2)
.. controls +(+90:0.5) and +(+90:0.5) ..  (B1) .. controls +(+90:0.65) and +(-90:0.65) ..  (T1);
\draw[black,line width = 2 pt,fill=black!60,fill opacity=0.5]
(T2) .. controls +(-90:0.5) and +(-90:0.5) ..  (T3) .. controls +(-90:0.65) and +(+90:0.65) .. (B4)
.. controls +(+90:0.5) and +(+90:0.5) ..  (B3) .. controls +(+90:0.65) and +(-90:0.65) ..  (T2);
\foreach \i in {1,...,4}  { \fill (T\i) circle (5pt); \fill (B\i) circle (5pt); }
\end{tikzpicture}
\end{array},
\quad
\begin{array}{c}
 \begin{tikzpicture}[scale=.5,line width=1pt]
\foreach \i in {1,...,4}  { \path (\i,1) coordinate (T\i); \path (\i,-.5) coordinate (B\i); }
\filldraw[fill= black!10,draw=black!10,line width=4pt]  (T1) -- (T4) --(B4) -- (B1) -- (T1);
\draw[black,line width = 2 pt,fill=black!60,fill opacity=0.5]
(T1) .. controls +(-90:0.65) and +(-90:0.65) ..  (T2) .. controls +(-90:0.65) and +(+90:0.65) .. (B3)
.. controls +(+90:0.65) and +(+90:0.5) ..  (B1) .. controls +(+90:0.65) and +(-90:0.65) ..  (T1);
\draw[black,line width = 2 pt,fill=black!60,fill opacity=0.5]
(T3) .. controls +(-90:0.65) and +(-90:0.65) ..  (T4) .. controls +(-90:0.65) and +(+90:0.65) .. (B4)
.. controls +(+90:0.65) and +(+90:0.85) ..  (B2) .. controls +(+90:0.65) and +(-90:0.65) ..  (T3);
\foreach \i in {1,...,4}  { \fill (T\i) circle (5pt); \fill (B\i) circle (5pt); }
\end{tikzpicture}
\end{array},
\ldots \text{(5 more)} \ldots
\]
\vskip-.15in
\[
\begin{array}{c}
 \begin{tikzpicture}[scale=.5,line width=1pt]
\foreach \i in {1,...,4}  { \path (\i,1) coordinate (T\i); \path (\i,-.5) coordinate (B\i); }
\filldraw[fill= black!10,draw=black!10,line width=4pt]  (T1) -- (T4) --(B4) -- (B1) -- (T1);
\draw[black,line width = 2 pt,fill=black!60,fill opacity=0.5]
(T1) .. controls +(-90:0.65) and +(-90:0.65) ..  (T2) .. controls +(-90:0.65) and +(+90:0.65) .. (B4)
.. controls +(+90:0.5) and +(+90:0.5) ..  (B3) .. controls +(+90:0.65) and +(-90:0.65) ..  (T1);
\draw[black,line width = 2 pt,fill=black!60,fill opacity=0.5]
(T3) .. controls +(-90:0.65) and +(-90:0.65) ..  (T4) .. controls +(-90:0.65) and +(+90:0.65) .. (B2)
.. controls +(+90:0.5) and +(+90:0.5) ..  (B1) .. controls +(+90:0.65) and +(-90:0.65) ..  (T3);
\foreach \i in {1,...,4}  { \fill (T\i) circle (5pt); \fill (B\i) circle (5pt); }
\end{tikzpicture}
\end{array}, \quad
\begin{array}{c}
 \begin{tikzpicture}[scale=.5,line width=1pt]
\foreach \i in {1,...,4}  { \path (\i,1) coordinate (T\i); \path (\i,-.5) coordinate (B\i); }
\filldraw[fill= black!10,draw=black!10,line width=4pt]  (T1) -- (T4) --(B4) -- (B1) -- (T1);
\draw[black,line width = 2 pt,fill=black!60,fill opacity=0.5]
(T1) .. controls +(-90:0.5) and +(-90:0.5) ..  (T3) .. controls +(-90:0.65) and +(+90:0.65) .. (B4)
.. controls +(+90:0.5) and +(+90:0.5) ..  (B3) .. controls +(+90:0.65) and +(-90:0.65) ..  (T1);
\draw[black,line width = 2 pt,fill=black!60,fill opacity=0.5]
(T2) .. controls +(-90:0.85) and +(-90:0.65) ..  (T4) .. controls +(-90:0.65) and +(+90:0.65) .. (B2)
.. controls +(+90:0.5) and +(+90:0.5) ..  (B1) .. controls +(+90:0.65) and +(-90:0.65) ..  (T2);
\foreach \i in {1,...,4}  { \fill (T\i) circle (5pt); \fill (B\i) circle (5pt); }
\end{tikzpicture}
\end{array}, \quad
\begin{array}{c}
 \begin{tikzpicture}[scale=.5,line width=1pt]
\foreach \i in {1,...,4}  { \path (\i,1) coordinate (T\i); \path (\i,-.5) coordinate (B\i); }
\filldraw[fill= black!10,draw=black!10,line width=4pt]  (T1) -- (T4) --(B4) -- (B1) -- (T1);
\draw[black,line width = 2 pt,fill=black!60,fill opacity=0.5]
(T1) .. controls +(-90:0.85) and +(-90:0.85) ..  (T4) .. controls +(-90:0.65) and +(+90:0.65) .. (B4)
.. controls +(+90:0.5) and +(+90:0.5) ..  (B3) .. controls +(+90:0.65) and +(-90:0.65) ..  (T1);
\draw[black,line width = 2 pt,fill=black!60,fill opacity=0.5]
(T2) .. controls +(-90:0.5) and +(-90:0.5) ..  (T3) .. controls +(-90:0.65) and +(+90:0.65) .. (B2)
.. controls +(+90:0.5) and +(+90:0.5) ..  (B1) .. controls +(+90:0.65) and +(-90:0.65) ..  (T2);
\foreach \i in {1,...,4}  { \fill (T\i) circle (5pt); \fill (B\i) circle (5pt); }
\end{tikzpicture}
\end{array},
\quad
\begin{array}{c}
 \begin{tikzpicture}[scale=.5,line width=1pt]
\foreach \i in {1,...,4}  { \path (\i,1) coordinate (T\i); \path (\i,-.5) coordinate (B\i); }
\filldraw[fill= black!10,draw=black!10,line width=4pt]  (T1) -- (T4) --(B4) -- (B1) -- (T1);
\draw[black,line width = 2 pt,fill=black!60,fill opacity=0.5]
(T1) .. controls +(-90:0.65) and +(-90:0.65) ..  (T2) .. controls +(-90:0.65) and +(+90:0.65) .. (B4)
.. controls +(+90:0.65) and +(+90:0.5) ..  (B2) .. controls +(+90:0.65) and +(-90:0.65) ..  (T1);
\draw[black,line width = 2 pt,fill=black!60,fill opacity=0.5]
(T3) .. controls +(-90:0.65) and +(-90:0.65) ..  (T4) .. controls +(-90:0.65) and +(+90:0.65) .. (B3)
.. controls +(+90:0.65) and +(+90:0.85) ..  (B1) .. controls +(+90:0.65) and +(-90:0.65) ..  (T3);
\foreach \i in {1,...,4}  { \fill (T\i) circle (5pt); \fill (B\i) circle (5pt); }
\end{tikzpicture}
\end{array},
\ldots \text{(5 more)} \ldots
\]\end{enumerate} \end{example} 
 \begin{subsection}{Irreducible modules for $\Z_k(\DD_n)$} \end{subsection}

\begin{thm}\label{thm:simplemods}  Assume $\ell \in \Lambda_k^{\bullet}(\DD_n)$  as in \eqref{eq:lamsDn}, and let $a_\ell \in \{0,1\dots, k\}$ be minimal such that
$\ell \equiv k-2a_\ell \modd 2n$.  
\begin{itemize} 
\item[{\rm (i)}]  For $\ac  \in \{1,\dots, n-1\}$, 
 \begin{equation}\label{eq:wkjdef} \Z_{k}^{(\ac)} = \Z_k(\DD_n)^{(\ell)} := \spann_{\mathbb C}\{ \vf_{\mathsf{t}} \mid \mathsf{t} \in {\kl}_\ac \},\end{equation} 
where ${\kl}_\ac$ is as in \eqref{eq:Kdef},  
 is an irreducible $\Z_k(\DD_n)$-module,  and 
\begin{equation}\label{eq:dimwj} \dimm \Z_{k}^{(\ac)}= \sum_{{0 \leq b \leq k} \atop {b \equiv a_\ell \modd n}}{k \choose b} \ = \
\hbox{coefficient of} \ \ z^{a_\ell} \ \  \hbox{in} \ \  (1+z)^{k} \big |_{z^{n} = 1}.  \end{equation} 

\item[{\rm (ii)}] For  $\ac\in \{0,n\}$,  
\begin{eqnarray}\label{eq:wk0ndef} \Z_k^{(\ac)} &=& \Z_k(\DD_n)^{(\ell)}:= \spann_{\mathbb C}\{ \vf_{\mathsf{t}}+ i^{\ac-k}\vf_{\mathsf{-t}}\ \mid \mathsf{t} \in {\kl}_\ac\},\\
\Z_k^{(\ac')} &=& \Z_k(\DD_n)^{(\ell')}:= \spann_{\mathbb C}\{ \vf_{\mathsf{t}}- i^{\ac-k}\vf_{\mathsf{-t}}\ \mid \mathsf{t} \in {\kl}_\ac\}, \nonumber \end{eqnarray}   
where ${\kl}_\ac$ is as in \eqref{eq:K0ndef},  are irreducible $\Z_k(\DD_n)$-modules,  and 
\begin{equation}\label{eq:dimw0n} \dimm \Z_k^{(\ac')} = \dimm \Z_k^{(\ac)} = \frac{1}{2}\sum_{{0 \leq b \leq k} \atop {b \equiv a_\ac  \modd n}}{k \choose b} = 
\frac{1}{2}\left(\hbox{coefficient of} \ \ z^{a_\ac} \ \  \hbox{in} \ \  (1+z)^{k} \big |_{z^{n} = 1}\right).
\end{equation} 
\end{itemize}
Up to isomorphism, the modules in  {\rm (i)} and {\rm (ii)}  are the only irreducible  $\Z_k(\DD_n)$-modules.
\end{thm}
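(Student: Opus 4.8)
The plan is to obtain the list of irreducible $\Z_k(\DD_n)$-modules directly from the explicit matrix-unit basis already constructed, rather than from abstract double-centralizer theory. By Corollary~\ref{cor:matrixbasis}, $\Z_k(\DD_n)$ has a basis consisting of the $\mathsf{e_{r,s}}$ with $\mathsf{r,s}\in\kl_\ac$ for $\ac\in\{1,\dots,n-1\}\cap\Lambda_k^\bullet(\DD_n)$, together with the $\mathsf{e^{\pm}_{t,u}}$ with $\mathsf{t,u}\in\kl_\ac$ for $\ac\in\{0,n\}\cap\Lambda_k^\bullet(\DD_n)$. First I would record that these basis elements multiply as a block-diagonal family of matrix units: within one family $\{\mathsf{e_{r,s}}\}_{\mathsf{r,s}\in\kl_\ac}$ this is Theorem~\ref{thm:mj}(iii), within $\{\mathsf{e^{+}_{r,s}}\}$ and $\{\mathsf{e^{-}_{r,s}}\}$ it is Theorem~\ref{thm:m0n}(iii), and a product of two basis elements taken from different index sets $\kl_\ac,\kl_{\ac'}$ (or from the $+$ and $-$ copies of one $\kl_\ac$) vanishes because distinct $\ac\in\{0,1,\dots,n\}$ force distinct residues $k-2\mathsf{\vert r\vert}\modd 2n$ on the relevant simple tensors. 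This yields the algebra isomorphism
\[
\Z_k(\DD_n)\;\cong\;\bigoplus_{\ac\in\{1,\dots,n-1\}\cap\Lambda_k^\bullet(\DD_n)}\mathcal M_{d_\ac}(\CC)\ \oplus\ \bigoplus_{\ac\in\{0,n\}\cap\Lambda_k^\bullet(\DD_n)}\bigl(\mathcal M_{d_\ac}(\CC)^{+}\oplus\mathcal M_{d_\ac}(\CC)^{-}\bigr),\qquad d_\ac=\vert\kl_\ac\vert,
\]
which is the Wedderburn decomposition of the semisimple algebra $\Z_k(\DD_n)$.

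Next I would invoke the classical fact (\cite[Secs.~3B and 68]{CR}) that a direct sum of full matrix algebras $\bigoplus_i\mathcal M_{d_i}(\CC)$ has, up to isomorphism, exactly one simple module per summand, namely the natural column module of dimension $d_i$, and that these are pairwise non-isomorphic. It then remains only to recognize the subspaces in the statement as those natural modules. For $\ac\in\{1,\dots,n-1\}$, Theorem~\ref{thm:mj}(ii) gives $\mathsf{e_{r,s}}\vf_{\mathsf t}=\delta_{\mathsf{s,t}}\vf_{\mathsf r}$ on $\Z_k^{(\ac)}=\spann_\CC\{\vf_{\mathsf t}\mid\mathsf t\in\kl_\ac\}$, while the matrix units of every other block annihilate this space; hence $\Z_k^{(\ac)}$ is precisely the natural $\mathcal M_{d_\ac}(\CC)$-module, irreducible of dimension $d_\ac$. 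Likewise, for $\ac\in\{0,n\}$, Theorem~\ref{thm:m0n}(ii) shows that $\mathsf{e^{+}_{r,s}}$ acts as matrix units on $\Z_k^{(\ac)}=\spann_\CC\{\vf_{\mathsf t}+i^{\ac-k}\vf_{-\mathsf t}\}$ and annihilates $\Z_k^{(\ac')}=\spann_\CC\{\vf_{\mathsf t}-i^{\ac-k}\vf_{-\mathsf t}\}$ (and $\mathsf{e^{-}_{r,s}}$ does the reverse), with all remaining blocks acting as $0$ on both; so $\Z_k^{(\ac)}$ and $\Z_k^{(\ac')}$ are the natural modules of $\mathcal M_{d_\ac}(\CC)^{+}$ and $\mathcal M_{d_\ac}(\CC)^{-}$. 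Since the number of blocks equals $\vert\Lambda_k(\DD_n)\vert$, this simultaneously proves that these modules exhaust the irreducibles and are mutually inequivalent.

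Finally I would evaluate $d_\ac=\vert\kl_\ac\vert$ to obtain \eqref{eq:dimwj} and \eqref{eq:dimw0n}. For $\ac\in\{1,\dots,n-1\}$ I would note that $\mathsf r$ and $-\mathsf r$ cannot both satisfy $k-2\mathsf{\vert\cdot\vert}\equiv\ac\modd 2n$ (that would force $\ac\equiv-\ac\modd 2n$), so $\kl_\ac$ selects exactly the $\succ$-larger member of each pair $\{\mathsf r,-\mathsf r\}$ with $k-2\mathsf{\vert r\vert}\equiv\pm\ac\modd 2n$; translating the two sign options into $\mathsf{\vert r\vert}\equiv a_\ac$ and $\mathsf{\vert r\vert}\equiv a_\ac+\ac$ ($\modd n$) and using that $b\mapsto k-b$ carries $\{b\equiv a_\ac\}$ bijectively onto $\{b\equiv a_\ac+\ac\}$ while fixing ${k\choose b}$, one gets $d_\ac=\sum_{b\equiv a_\ac\,(\bmod\,n)}{k\choose b}$. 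For $\ac\in\{0,n\}$, instead, $2\ac\equiv 0\modd 2n$, so both $\mathsf r$ and $-\mathsf r$ are eligible, no $\mathsf r$ equals $-\mathsf r$, and $b\mapsto k-b$ preserves $\{b\equiv a_\ac\,(\bmod\,n)\}$; hence $\kl_\ac$ consists of exactly half of the eligible tuples and $d_\ac=\tfrac12\sum_{b\equiv a_\ac\,(\bmod\,n)}{k\choose b}$. Rewriting these sums as the coefficient of $z^{a_\ac}$ in $(1+z)^k\big|_{z^n=1}$ proceeds exactly as in the proof of Theorem~\ref{thm:dims}. The one part I expect to require genuine care is this last combinatorial bookkeeping — in particular verifying that the normalization $\mathsf r\succ-\mathsf r$, together with the isomorphism $\DD_n^{(\ac)}\cong\DD_n^{(2n-\ac)}$, counts each $\DD_n$-isotypic component of $\VV^{\ot k}$ exactly once; the representation-theoretic half of the argument is essentially immediate from the matrix-unit computations of Theorems~\ref{thm:mj} and~\ref{thm:m0n}.
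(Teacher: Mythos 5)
Your structural argument mirrors the paper's own: decompose $\Z_k(\DD_n)$ via the matrix-unit basis of Corollary~\ref{cor:matrixbasis} into full matrix blocks, identify $\Z_k^{(\ell)}$ and $\Z_k^{(\ell')}$ as the natural column modules of those blocks, and conclude irreducibility and completeness by the Wedderburn decomposition. That part is fine and is essentially what the paper does.

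The one place you take a genuinely different route is the cardinality computation for $\kl_\ell$ when $\ell\in\{1,\dots,n-1\}$, and there is a real gap there, not mere pedantry. You assert that $\kl_\ell$ selects the $\succ$-larger member of each pair $\{\mathsf{r},-\mathsf{r}\}$ with $k-2|\mathsf{r}|\equiv\pm\ell\bmod 2n$. But the definition \eqref{eq:Kdef} requires $k-2|\mathsf{r}|\equiv+\ell$ \emph{and} $\mathsf{r}\succ-\mathsf{r}$ simultaneously, and nothing in the congruence forces the member of a pair satisfying $\equiv\ell$ to also be the $\succ$-larger one. Once $k\geq 2n-\ell$ there are $\mathsf{r}$ with $k-2|\mathsf{r}|\equiv\ell\bmod 2n$ and $|\mathsf{r}|>k/2$: such $\mathsf{r}$ fail the $\succ$ test, while $-\mathsf{r}$ fails the congruence test (it lies in the $\equiv-\ell$ class), so the whole pair drops out of the literal $\kl_\ell$. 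Concretely, for $n=3$, $k=5$, $\ell=1$ the written $\kl_1$ is $\{\mathsf{r}:|\mathsf{r}|=2\}$ of size $\binom{5}{2}=10$, whereas the multiplicity of $\DD_3^{(1)}$ in $\VV^{\ot 5}$ is $\binom{5}{2}+\binom{5}{5}=11$ (also visible by walking the $\hat{\mathsf D}_5$ graph). Your $\pm\ell$ reading produces the correct $11$, but it is a different set from the one \eqref{eq:Kdef} defines, and under the literal definition the span $\spann_\CC\{\vf_{\mathsf{t}}\mid\mathsf{t}\in\kl_\ell\}$ appearing in \eqref{eq:wkjdef} would have dimension $10$, not $11$. (The paper's own proof of \eqref{eq:dimwj} makes the identical leap, so the issue originates upstream of your argument.) The clean repair is to define $\kl_\ell$ for $\ell\in\{1,\dots,n-1\}$ by the congruence $k-2|\mathsf{r}|\equiv\ell\bmod 2n$ alone, dropping $\succ$ entirely; the normalization it was meant to provide is automatic there, since $\mathsf{r}$ and $-\mathsf{r}$ never both satisfy that congruence, and with this amendment your $b\mapsto k-b$ bijection computes $|\kl_\ell|=\sum_{b\equiv a_\ell\,(\bmod\,n)}\binom{k}{b}$ exactly as you intend. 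Apart from this bookkeeping mismatch the proposal is correct and complete.
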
  

\begin{proof} 
Assume initially that $\ac  \in \{1,\dots, n-1\}$ and $\ell \in \Lambda_k^{\bullet}(\DD_n)$.   Let 
$\Z_{k}^{(\ac)}$ be as in \eqref{eq:wkjdef}.   For all 
$\mathsf{r,s,t}  \in {\kl}_\ac$, \ 
$\mathsf{e_{r,s}} \vf_{\mathsf{t}} = \mathsf{\delta_{s,t}} \vf_{\mathsf{r}}$, where $\mathsf{e_{r,s} = E_{r,s} + E_{-r,-s}} \in \Z_k(\DD_n)$ is as in Theorem \ref{thm:mj} (ii).
Thus, $\Z_{k}^{(\ac)}$ is the unique irreducible module (up to isomorphism) for the matrix
subalgebra of $\Z_k(\DD_n)$  having basis the matrix units  $\mathsf{e_{r,s}}$, $\mathsf{r,s} \in {\kl}_\ac$.   Since the other basis
elements in the basis $\mathcal B_{\mathsf{mat}}^k(\DD_n)$ (in Corollary \ref{cor:matrixbasis})  act trivially on the
vectors in $\Z_{k}^{(\ac)}$,  we have that $\Z_{k}^{(\ac)}$ is an irreducible  $\Z_k(\DD_n)$-module.     Since $k-2\mathsf{\vert t \vert} \equiv 
\ell \modd 2n$ for all $\mathsf{t} \in {\kl}_\ell$, we have $\mathsf{\vert t \vert} \equiv a_\ell \modd n$. Therefore $\dimm \Z_{k}^{(\ac)}= \sum_{{0 \leq b \leq k} \atop {b \equiv a_\ac \modd n}}{k \choose b} \ = \
\hbox{coefficient of} \ \ z^{a_\ac} \ \  \hbox{in} \ \  (1+z)^{k} \big |_{z^{n} = 1},$  as claimed in (i). 
Observe  $\W_{k}^{(\ac)}: = \spann_{\mathbb C}\{ i^{\ac-k} \vf_{\mathsf{-t}} \mid \mathsf{t} \in {\kl}_\ac\}$
is also a $\Z_k(\DD_n)$-module isomorphic to $\Z_k^{(\ac)}$ by Theorem \ref{thm:mj}\,(ii),  
 and the $(\DD_n \times \Z_k(\DD_n))$-bimodule  $\mathsf{B}_{k}^{(\ac)}$ in that theorem is the sum  $\mathsf{B}_{k}^{(\ac)} =
\W_{k}^{(\ac)} \oplus  \Z_{k}^{(\ac)}.$

Now assume $\ac= 0,n$ and $\ell \in \Lambda_k^{\bullet}(\DD_n)$.   Let 
$\Z_k^{(\ac)}$ and $\Z_k^{(\ac')}$ be as in \eqref{eq:wk0ndef}.   
   Since according to Theorem \ref{thm:m0n},  
\[ \mathsf{e^{\pm}_{r,s}}(\vf_{\mathsf t} \pm i^{\ac-k} \vf_{\mathsf {-t}})\ = \ 
\delta_{\mathsf{s,t}}(\vf_{\mathsf r} \pm i^{\ac-k} \vf_{\mathsf {-r}}), \ \qquad  \
\mathsf{e^{\pm}_{r,s}}(\vf_{\mathsf t} \mp i^{\ell-k} \vf_{\mathsf {-t}}) \ = \
0\]  
for $\mathsf{e^{\pm}_{r,s}} := \frac{1}{2}\Big((\mathsf{E_{r,s} + E_{-r,-s}}) \pm i^{\ac-k}
(\mathsf{E_{r,-s} + E_{-r,s}})\Big)$ with $\mathsf{r,s} \in {\kl}_\ac$, and since
all other elements of the basis $\mathcal B_{\mathsf{mat}}^k(\DD_n)$ act trivially on $\Z_k^{(\ac)}$ and $\Z_k^{(\ac')}$,
we see that they are irreducible  $\Z_k(\DD_n)$-modules.      Moreover, since $\mathsf{\vert t\vert} \equiv a_\ell$ for all $\mathsf{t} \in \kl_\ell$, they have dimension 
\[ \frac{1}{2}\sum_{{0 \leq b \leq k} \atop {b \equiv a_\ac  \modd n}}{k \choose b} = 
\frac{1}{2}\left(\hbox{coefficient of} \ \ z^{a_\ac} \ \  \hbox{in} \ \  (1+z)^{k} \big |_{z^{n} = 1}\right).\]
 As the sum of the squares of the dimensions of the modules in (i) and (ii) adds up to
$\dimm \Z_k(\DD_n)$, these are all the irreducible  $\Z_k(\DD_n)$-modules
up to isomorphism.   \end{proof} 
The next result  is an immediate consequence of Theorem \ref{thm:simplemods}.  

 \begin{cor}\label{Cor:Dwalks} As in Theorem \ref{thm:simplemods},  assume $\ell \in \{0,1,\dots, n\}$ and $a_\ell \in \{0,1\dots, k\}$ is minimal such that
$\ell \equiv k-2a_\ell \modd 2n$. 
 \begin{itemize}
\item[{\rm (i)}]  For $\ac \in \{1,\dots, n-1\}$, the number of walks of $k$ steps from 0 to $\ac$ on the affine Dynkin diagram of type $\hat{\mathsf{D}}_{n+2}$ is  $\displaystyle{\sum_{{0 \leq b \leq k} \atop {b \equiv a_\ell \modd n}}{k \choose b}.}$ 
\item[{\rm (ii)}]  For $\ac \in \{0,n\}$, the number of walks of $k$ steps from 0 to $\ac$ (to $\ac'$)  on the affine Dynkin diagram of type $\hat{\mathsf{D}}_{n+2}$ is \ \, $\displaystyle{\half\sum_{{0 \leq b \leq k} \atop {b \equiv a_\ell \modd n}}{k \choose b}.}$ 
\end{itemize}
\end{cor} 

\begin{example}\label{ex:di4} 
{\rm Assume $k = 4$ and $n= 5$.      Then $\Lambda_k(\DD_n) = \{0,0',2,4\}$ and $a_\ac = \half(k-\ell) =2,1,0,$ respectively. 
Thus,
\[\dimm \Z_4^{(0')}  = \dimm \Z_4^{(0)} =  \frac{1}{2} \sum_{{0 \leq b \leq 4}\atop{b \equiv 2 \modd 5}} {4 \choose b}  = \frac{1}{2} {4 \choose 2} = 3.\]
\begin{eqnarray*} \dimm \Z_4^{(\ac)} &=& \begin{cases} \displaystyle{\sum_{{0 \leq b \leq 4}\atop{b \equiv 1 \modd 5} }} {4 \choose b}  = {4 \choose 1} = 4 & \qquad  \hbox{if} \ \  \ac = 2, \\
\displaystyle{\sum_{{0 \leq b \leq 4}\atop{b \equiv 0 \modd 5}}} {4 \choose b}  = {4 \choose 0} = 1 & \qquad  \hbox{if} \ \  \ac = 4. \end{cases} 
 \end{eqnarray*}
 Then $\dimm \Z_k(\DD_n)$ is the sum of the squares of these numbers which is 35, 
 as in Example \ref{ex:dih2}. 
 }  \end{example}   
 
 \begin{remark}  For $\ell \in \{1,\dots, n-1\}$ we have
 \begin{equation}  \dimm \Z_k(\C_{2n})^{(2n-\ell)} = \dimm \Z_k(\C_{2n})^{(\ell)} = \dimm \Z_k(\DD_n)^{(\ell)}. \end{equation}
 This can be seen from the observation that if  $2n-\ell \equiv k-2a \modd 2n$,  for some $a \in \{0,1,\dots, k\}$,  then $\ell \equiv k-2(k-a)  \modd n$,
 so that
\begin{eqnarray*} \dimm \Z_k(\C_{2n})^{(2n-\ell)} &=&\sum_{ {0 \leq b \leq k} \atop {b \equiv a \modd n}} {k \choose b} \ = \
 \sum_{ {0 \leq k-b \leq k}\atop {k-b \equiv  k-a \modd n}} {k \choose k-b} \ = \ \sum_{ {0 \leq c \leq k} \atop {c \equiv k-a \modd n}} {k \choose c}  \\   &=&  \dimm \Z_k(\C_{2n})^{(\ell)} =  \dimm \Z_k(\DD_n)^{(\ell)}. \end{eqnarray*}
For $\ell \in \{0,n\}$,  we have seen that 
\[\dimm \Z_k(\DD_n)^{(\ell')} = \dimm \Z_k(\DD_n)^{(\ell)} = \half  \dimm \Z_k(\C_{2n})^{(\ell)}.\]  Therefore,
\begin{eqnarray*} \dimm \Z_k(\DD_n) &=& \sum_{\ell = 1}^{n-1} \left(\dimm \Z_k(\DD_n)^{(\ell)}\right)^2 + \sum_{\ell \in \{0,n\}}\left( \left(\dimm \Z_k(\DD_n)^{(\ell)}\right)^2 +  \left(\dimm \Z_k(\DD_n)^{(\ell')}\right)^2 \right) \\
&=& \half \sum_{\ell =1, \ell \neq n}^{2n-1}  \left(\dimm \Z_k(\C_{2n})^{(\ell)}\right)^2 + \half\left(\dimm \Z_k(\C_{2n})^{(0)}\right)^2 +  \half\left(\dimm\Z_k(\C_{2n})^{(n)}\right)^2 \\
&=& \half \dimm \Z_k(\C_{2n}),  \end{eqnarray*}
as in Theorem \ref{thm:dicentbasis}(b).
\end{remark}

\subsection{The infinite binary dihedral subgroup $\DD_{\infty}$ of $\SU_2$} 
Let  $\DD_\infty$ denote the subgroup of $\SU_2$ generated by 
\begin{equation}\label{eq:defghinf} g = \left (\begin{array}{cc} \zeta^{-1}  & 0 \\ 0 & \zeta \end{array}\right),  \qquad
h = \left (\begin{array}{cc} 0  & i \\  i & 0 \end{array}\right),\end{equation}
where $\zeta =e^{i \theta}$, $\theta \in \mathbb R$, $i = \sqrt{-1}$, and $\zeta$ is not
a root of unity.   Then the following relations hold in $\DD_\infty$:
\begin{equation}\label{eq:defrel} h^4 = 1,  \quad   h^{-1} g h = g^{-1}, \quad g^n \neq 1 \ \ \hbox{\rm for} \ \ n \neq 0. \end{equation} 
 
For $\ac= 1,2, \dots$,  let  $\DD_\infty^{(\ac)}$ denote the two-dimensional $\DD_\infty$-module on
which the generators $g,h$ have the following matrix representations:
\[g  \ \rightarrow  \left (\begin{array}{cc} \zeta^{-\ac}  & 0 \\ 0 & \zeta^\ac\end{array}\right),  \qquad
h  \ \rightarrow   \left (\begin{array}{cc} 0  & i^\ac \\  i^\ac& 0 \end{array}\right).\]
relative to the basis $\{\vf_{-\ac}, \vf_\ac\}$.  In particular, $\VV = \DD_{\infty}^{(1)}$. 
For  $\ac = 0,0'$, let the one-dimensional 
$\DD_{\infty}$-module  $\DD_n^{(\ac)}$ be given by 
\begin{eqnarray}  
\DD_\infty^{(0)}  &=&  \CC \vf_0,  \qquad  \  g\vf_0 = \vf_0, \quad \ h\vf_0 = \vf_0  \\
\DD_\infty^{(0')} &=& \CC \vf_{0'},  \qquad   g\vf_{0'} = \vf_{0'}, \quad h\vf_{0'} = -\vf_{0'}.
\end{eqnarray}  
 
 \begin{prop}\label{prop:Vtensinf} Tensor products of $\VV$ with the irreducible modules
$\DD_\infty^{(\ac)}$ decompose as  follows: 
\begin{itemize} 
\item[{\rm (a)}]  
 $\DD_\infty^{(\ac)} \otimes \VV   \cong   \DD_\infty^{(\ac-1)} \oplus  \DD_\infty^{(\ac+1)}$ \ \  for $1< \ac < \infty$; 
\item[{\rm (b)}]  $ \DD_\infty^{(1)} \ot \VV  \cong   \DD_\infty^{(0')} \oplus   \DD_\infty^{(0)} \oplus \DD_\infty^{(2)}$; 
\item[{\rm (c)}]  $\DD_\infty^{(0)} \ot \VV  \cong\DD_\infty^{(1)} =   \VV$,  \quad   $\DD_\infty^{(0')} \ot \VV  \cong \DD_\infty^{(1)} =   \VV$.\end{itemize}  
   \end{prop}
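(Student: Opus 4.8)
The plan is to verify each of the three decompositions in Proposition~\ref{prop:Vtensinf} by a direct computation with the standard bases, exactly as was done for the finite binary dihedral group in Proposition~\ref{prop:Vtens}. The point is that the matrix realizations of $g$ and $h$ on $\DD_\infty^{(\ell)}$ are \emph{formally identical} to those on $\DD_n^{(\ell)}$ for $1 \le \ell$; the only thing that changes is that $\zeta$ is now a non-root-of-unity, so the powers $\zeta^{\ell}$ for distinct $\ell \ge 0$ are all distinct and never collapse modulo any $n$. Consequently the branch-node identifications that occur in the $\DD_n$ case (where $\DD_n^{(n-1)}\ot\VV$ splits off the one-dimensional modules $\DD_n^{(n)},\DD_n^{(n')}$, and where $\DD_n^{(j)}\cong\DD_n^{(2n-j)}$) simply do not occur here, and the representation graph becomes the one-ended infinite diagram $\mathsf{D}_\infty$ with a single branch node at $\ell=1$.

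For part (a), with $1 < \ell < \infty$, I would take the standard basis $\{\vf_{-\ell},\vf_\ell\}$ of $\DD_\infty^{(\ell)}$ and $\{\vf_{-1},\vf_1\}$ of $\VV$, form the four simple tensors $\vf_{\pm\ell}\ot\vf_{\pm 1}$, and compute the $g$-eigenvalues: $\vf_{-\ell}\ot\vf_{-1}$ and $\vf_{\ell}\ot\vf_{1}$ have eigenvalues $\zeta^{-(\ell+1)}$ and $\zeta^{\ell+1}$, while $\vf_{-\ell}\ot\vf_1$ and $\vf_\ell\ot\vf_{-1}$ both have eigenvalue $\zeta^{\mp(\ell-1)}$. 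Since $h$ interchanges $\vf_{\pm\ell}$ and $\vf_{\pm1}$ up to the scalar $i^{\ell}\cdot i = i^{\ell+1}$, one checks that $\{\vf_{-\ell}\ot\vf_{-1},\, \vf_\ell\ot\vf_1\}$ spans a submodule isomorphic to $\DD_\infty^{(\ell+1)}$ and $\{\vf_{-\ell}\ot\vf_1,\, \vf_\ell\ot\vf_{-1}\}$ (with the correct normalizing power of $i$, namely using $i^{\ell-1}\vf_{\ell}\ot\vf_{-1}$ and $i^{1-\ell}\vf_{-\ell}\ot\vf_1$ as a standard basis) spans a submodule isomorphic to $\DD_\infty^{(\ell-1)}$. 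Because $\ell-1 \ge 1$ here, both summands are genuinely two-dimensional irreducibles, giving (a). For part (c), the module $\DD_\infty^{(0)}\ot\VV$ is two-dimensional with $g$ acting as $\mathrm{diag}(\zeta^{-1},\zeta^{1})$ on $\vf_0\ot\vf_{\mp1}$ and $h$ swapping them up to a scalar, which is precisely the standard realization of $\VV=\DD_\infty^{(1)}$; the same computation with the sign change $h\vf_{0'}=-\vf_{0'}$ shows $\DD_\infty^{(0')}\ot\VV\cong\VV$ as well, since an overall sign on one basis vector is absorbed by a change of basis.

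The one case needing a little more care is part (b), the branch node: $\DD_\infty^{(1)}\ot\VV = \VV\ot\VV$ is four-dimensional. Here I would use the decomposition $\VV^{\ot 2} = \mathsf{A}(\VV^{\ot2})\oplus\mathsf{S}(\VV^{\ot2})$ from the Temperley–Lieb discussion: the antisymmetric line $\CC(\vf_{-1}\ot\vf_1 - \vf_1\ot\vf_{-1})$ is $g$-fixed and $h$ sends it to $i^2 = -1$ times $(\vf_1\ot\vf_{-1}-\vf_{-1}\ot\vf_1) = $ itself (the sign from $i^2$ cancels the sign from the swap), so it is $\DD_\infty^{(0)}$; wait—more carefully, $h(\vf_{-1}\ot\vf_1-\vf_1\ot\vf_{-1}) = i^2(\vf_1\ot\vf_{-1}-\vf_{-1}\ot\vf_1) = -(\vf_1\ot\vf_{-1}-\vf_{-1}\ot\vf_1) = +(\vf_{-1}\ot\vf_1-\vf_1\ot\vf_{-1})$, so this line is $\DD_\infty^{(0)}$ if that equals $+1$ and $\DD_\infty^{(0')}$ otherwise; the sign bookkeeping determines which of $\{(0),(0')\}$ it is, and the $g$-fixed symmetric vector $\vf_{-1}\ot\vf_1+\vf_1\ot\vf_{-1}$ gives the other. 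The remaining two-dimensional piece spanned by $\vf_{-1}\ot\vf_{-1}$ and $\vf_1\ot\vf_1$ has $g$-eigenvalues $\zeta^{\mp2}$ and $h$ swapping them up to $i^2$, which is the standard realization of $\DD_\infty^{(2)}$. Assembling these gives $\VV\ot\VV\cong \DD_\infty^{(0')}\oplus\DD_\infty^{(0)}\oplus\DD_\infty^{(2)}$ as in (b). The main obstacle is purely clerical: getting the powers of $i$ and the placement of the primes $(0)$ versus $(0')$ exactly right so that the standard-basis identifications are literally correct rather than merely up to isomorphism; but since all we need is the isomorphism type of each summand, and the $g$-weights already separate the one- and two-dimensional pieces completely (no root-of-unity coincidences), there is no real difficulty, and the argument is essentially the $n\to\infty$ limit of Proposition~\ref{prop:Vtens}.
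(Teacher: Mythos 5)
Your proposal is correct and follows exactly the approach the paper itself indicates: for the finite analogue Proposition \ref{prop:Vtens} the paper says only ``This can be readily checked using the standard bases above,'' and gives no separate argument for Proposition \ref{prop:Vtensinf}, which is stated without proof as the evident $n\to\infty$ analogue. Your direct verification with the explicit matrices of $g$ and $h$ on the standard bases, separating $g$-eigenspaces and then tracking the $h$-action (and in particular identifying the antisymmetric line in $\VV^{\ot 2}$ as $\DD_\infty^{(0)}$ and the $g$-fixed symmetric vector as $\DD_\infty^{(0')}$), is precisely that check, carried out in the detail the paper leaves to the reader.
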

 
The representation graph $\mathcal R_{\VV}(\DD_{\infty})$ of $\DD_{\infty}$ is the Dynkin diagram
$\mathsf{D}_\infty$ (see Section \ref{subsec:Ddiag}),    where each of the nodes 
 $\ac = 0,0',1,2,\dots$ corresponds to one of these irreducible 
$\DD_\infty$-modules.    The arguments in previous sections can be  modified to show the next result. 

\begin{thm}\label{thm:diinfcentbasis} Let $\Z_k(\DD_{\infty}) =   \mathsf{End}_{\DD_{\infty}}(\VV^{\otimes k})$.   Then
 \begin{itemize}\item [{\rm (a)}] $\Z_k(\DD_{\infty}) = \{ X \in \Z_k(\C_{\infty}) \mid  hX = Xh\}$,  where $h$ is as
 in \eqref{eq:defghinf}.
\item [{\rm (b)}]  A basis for $\Z_k(\DD_{\infty}) = \mathsf{End}_{\DD_{\infty}}(\VV^{\otimes k})$   is  the set 
\begin{equation}\label{eq:dibasisinf} \mathcal B^k(\DD_\infty) 
= \mathsf{\left \{E_{r,s} + E_{-r,-s} \,\big | \,  r,s \in \{-1,1\}^k, \  r \succ -r,  \  \vert r \vert = \vert s \vert  \right \},} \end{equation} where $\mathsf{r \succ -r}$ is the order in Definition \ref{def:lexorder}. 
\item[{\rm (c)}]  The dimension of $\Z_k(\DD_{\infty})$ is given by
\begin{eqnarray}\label{eq:didiminf} 
\dimm \Z_k(\DD_{\infty}) &=& \half \dimm \Z_k(\C_{\infty}) =  \half  {2k \choose k}  = {2k-1 \choose k}  \\
&=& \half \left(\hbox{\rm coefficient of } \ z^k  \ \hbox{\rm in} \  (1+z)^{2k} \right). \nonumber \end{eqnarray}    
\item[{\rm (d)}]  For $\ell = 0,1,2, \dots,k$  such that $k - \ell$ is even,  let 
\begin{equation}\label{eq:Kinfdef} \tilde{\kl}_\ac  =  \left \{ \mathsf{r}\in \{-1,1\}^k  \ \Big  | \   
k-2 \mathsf{\vert r\vert}  = \ac  \right \} = \left \{ \mathsf{r} \in \{-1,1\}^k  \ \Big  | \  
\mathsf{\vert r\vert} = \textstyle{\half}(k-\ac)  \right \}.  \end{equation}
\begin{itemize}
\item[{\rm (i)}] When $1 \leq \ell \leq k$, then  $\Z_{k}^{(\ac)} = \Z_{k}(\DD_\infty)^{(\ac)}= \spann_{\mathbb C}\{ \vf_{\mathsf{t}} \mid \mathsf{t} \in \tilde{\kl}_\ac \}$
 is an irreducible $\Z_k(\DD_{\infty})$-module with
 \begin{equation}\label{eq:diminfj} \dimm \Z_{k}^{(\ac)}= {k \choose b}\  \ \hbox{\rm  where} \ \ b = \half(k-\ell).  \end{equation} 
For all 
$\mathsf{r,s,t}  \in {\kl}_\ac$, \ 
$\mathsf{e_{r,s}} \vf_{\mathsf{t}} = \mathsf{\delta_{s,t}} \vf_{\mathsf{r}}$, where $\mathsf{e_{r,s} = E_{r,s} + E_{-r,-s}} \in \Z_k(\DD_\infty)$. 
\item[{\rm (ii)}] When $\ell = 0$ (necessarily $k$ is even),  then 
\[\Z_k^{(0)} = \spann_{\mathbb C}\{ \vf_{\mathsf{t}}+ i^{-k}\vf_{\mathsf{-t}}\ \mid \mathsf{t} \in \tilde{\kl}_0\}  \ \hbox{\rm and} \ 
\Z_k^{(0')} = \spann_{\mathbb C}\{ \vf_{\mathsf{t}}- i^{-k}\vf_{\mathsf{-t}}\ \mid \mathsf{t} \in \tilde{\kl}_0\},\]
are irreducible $\Z_k(\DD_{\infty})$-modules with dimensions
 \begin{equation}\label{eq:diminf0} \dimm \Z_{k}^{(0)} =  \dimm \Z_{k}^{(0')}=  \frac{1}{2}{k \choose b}\  \ \hbox{\rm  where} \ \ b = \half k,  \end{equation} and  
\[ \mathsf{e^{\pm}_{r,s}}(\vf_{\mathsf t} \pm i^{-k} \vf_{\mathsf {-t}}) \, = \,
\delta_{\mathsf{s,t}}(\vf_{\mathsf r} \pm i^{-k} \vf_{\mathsf {-r}}), \qquad  
\mathsf{e^{\pm}_{r,s}}(\vf_{\mathsf t} \mp i^{-k} \vf_{\mathsf {-t}}) \, = \, 
0\]
for $\mathsf{e^{\pm}_{r,s}} := \frac{1}{2}\Big((\mathsf{E_{r,s} + E_{-r,-s}}) \pm i^{-k}
(\mathsf{E_{r,-s} + E_{-r,s}})\Big)$ with $\mathsf{r,s} \in \tilde{\kl}_0$. \end{itemize}
\item[{\rm (e)}] $\Z_k(\DD_{\infty}) \cong \mathsf{Q}_k$, where  $\mathsf{Q}_k$ is the subalgebra of the 
planar rook algebra $\mathsf{PR}_k$ having basis
\begin{itemize}
\item[{\rm (i)}]  $\{\mathsf{X_{R,S}} \mid  0  \leq  |\mathsf{R}| = |\mathsf{S}| \leq \half (k-1)\}$
when $k$  is odd;
\item[{\rm (ii)}]  $\{\mathsf{X_{R,S}} \mid  0 \leq  |\mathsf{R}| = |\mathsf{S}| \leq \half(k-2)\}
\cup 
\{ \mathsf{X_{\pm R,\pm S}}  \mid  |\mathsf{R}| = |\mathsf{S}| =  \half k, \  \mathsf{R  \succ -R}, \ \mathsf{S\succ -S} \}$ when $k$ is even;
\end{itemize}
where the  $\mathsf{X_{R,S}}$ for  $\mathsf{R,S} \subseteq \{1,\dots, k\}$ are the
matrix units in Remark  \ref{rem:prook}, and $\succ$ is the order coming from  the
order in Definition \ref{def:lexorder} and  from the identification of a subset $\mathsf{R}$  with the tuple $\mathsf{r} \in \{-1,1\}^k$. 
\end{itemize} 
\end{thm}  

 \begin{remark}   In Remark  \ref{rem:prook}, we identified 
a subset $\mathsf{R} \subseteq \{1,\dots, k\}$ with the $k$-tuple
$\mathsf{r} = (r_1,\dots, r_k)$ such that $r_j = -1$ if $j \in \mathsf{R}$ and $r_j = 1$
if $j \not \in \mathsf{R}$.  The element  $\mathsf{e_{r,s} = E_{r,s} + E_{-r,-s}}$
  in part (b) of this  theorem has the property that
  $\mathsf{\vert r \vert = \vert s \vert}$ and $\mathsf{r \succ -r}$.  Therefore, 
$|\mathsf{R}| = \mathsf{\vert r \vert}$ (the number of $-1$s in $\mathsf{r}$)  is less than or equal to $\mathsf{\vert -r \vert}$  (the number of  
of $-1$s in $\mathsf{-r}$) by the definition of $\succ$, and  $|\mathsf{R}| \leq \lfloor \half k \rfloor$.      Thus, when $k$ is odd, sending  each such $\mathsf{e_{r,s}}$ to 
the corresponding matrix unit $\mathsf{X_{R,S}}$ will give the desired isomorphism in part (e). 
When $k$ is even,
\begin{eqnarray} && \hspace{-.58truein} \left \{\mathsf{e_{r,s} =E_{r,s} + E_{-r,-s}} \ \Big | \  \mathsf{r,s} \in \{-1,1\}^k, \  \mathsf{r \succ -r,  \ \,  \vert r \vert = \vert s \vert} < \textstyle{\half}k  \right \} \\
&& \hspace{.8 truein} \bigcup \ \  \left \{\mathsf{e_{r,s}^{\pm} \ \Big | \  r,s} \in \{-1,1\}^k, \  \mathsf{r \succ -r, \,  \  \vert r \vert = \vert s \vert} =\textstyle{\half} k  \right \} \nonumber \end{eqnarray}
 is a basis for $\Z_k(\DD_{\infty})$.   Note that $\CC\mathsf{e_{r,s}^{+}} \oplus
 \CC\mathsf{e_{r,s}^{-}}  = 
 \CC\mathsf{e_{r,-s}^{+}} \oplus  \CC\mathsf{e_{r,-s}^{-}} $, so we can assume $\mathsf{s \succ -s}$.  Then 
sending $\mathsf{e_{r,s}}$ to $\mathsf{X_{R,S}}$ when $\mathsf{\vert r \vert = \vert s \vert} < \textstyle{\half} k$, and  $\mathsf{e_{r,s}^{\pm}}$ to $\mathsf{X_{\pm R, \pm S}}$ when
$\mathsf{\vert r \vert = \vert s \vert}=  \textstyle{\half} k$ gives the isomorphism in (e).  

In the planar rook algebra $\mathsf{PR}_k$, the elements $\mathsf{X_{R,S}}$ with
$\mathsf{\vert R \vert = \vert S \vert} = \half k$  form a matrix unit basis of 
a matrix algebra of dimension ${k \choose b}^2 $ where $b = \half k$.  
The subsets  $\{\mathsf{X_{R,S}} \mid \mathsf{R \succ -R}, \mathsf{S \succ -S}\}$
and $\{\mathsf{X_{-R,-S}} \mid \mathsf{R \succ -R}, \mathsf{S \succ -S}\}$ each give
a basis of a matrix algebra of dimension $\big(\half {k \choose b}\big)^2$.  The sum 
of those two matrix subalgebras is included in $\mathsf{Q}_k$.       \end{remark}

\begin{section}{Dynkin Diagrams, Bratteli Diagrams, and Dimensions}
\subsection{Representation graphs and Dynkin diagrams}\label{subsec:Ddiag} 
The representation graph $\mathcal{R}_{\VV}(\G)$ for  a finite subgroup $\G$ of $\SU_2$ is the corresponding extended affine Dynkin diagram of type $\hat{\mathsf A}_{n-1}, \hat{\mathsf D}_{n+1},  \hat{\mathsf E}_{6}, \hat{\mathsf E}_{7}, \hat{\mathsf E}_{8}.$  In the figures below, the label on the node is the index of the  irreducible $\G$-module,  and the label above the node is its dimension.  The trivial module is shown in white and the defining module $\VV$ is shown in black
\[
\begin{array}{rll}
\SU_2: \qquad  & \begin{array}{c} \includegraphics[scale=.65,page=1]{mckay-diagrams.pdf} \end{array} &\quad { ({\mathsf A}_{+\infty})}  \\
\C_n: \qquad  & \begin{array}{c} \includegraphics[scale=.65,page=2]{mckay-diagrams.pdf}  \end{array} &\quad (\hat{\mathsf A}_{n-1}) \\
\DD_n: \qquad  & \begin{array}{c} \includegraphics[scale=.65,page=3]{mckay-diagrams.pdf}  \end{array} &\quad (\hat{\mathsf D}_{n+2}) \\
\TT: \qquad & \begin{array}{c} \includegraphics[scale=.65,page=4]{mckay-diagrams.pdf} \end{array} &\quad (\hat{\mathsf E}_{6}) \\ 
\OO: \qquad & \begin{array}{c} \includegraphics[scale=.65,page=5]{mckay-diagrams.pdf}\end{array} &\quad (\hat{\mathsf E}_{7}) \\
\II: \qquad & \begin{array}{c} \includegraphics[scale=.65,page=6]{mckay-diagrams.pdf} \end{array} & \quad (\hat{\mathsf E}_{8}) \\
\C_\infty:\qquad& \begin{array}{c} \includegraphics[scale=.65,page=7]{mckay-diagrams.pdf}  \end{array} &\quad { ({\mathsf A}_{\infty})} \\
\DD_\infty:\qquad& \begin{array}{c} \includegraphics[scale=.65,page=8]{mckay-diagrams.pdf}  \end{array} &\quad { ({\mathsf D}_{\infty})} \\
\end{array}
\]

\subsection{Bratteli diagrams}\label{subsec:Bratteli}

The first few rows of the Bratteli diagrams $\mathcal{B}_{\VV}(\G)$ for  finite subgroups $\G$ of $\SU_2$ are displayed here.  The  nodes  on level $k$  label the irreducible $\G$-modules that appear in $\VV^{\otimes k}$.  The label below each node at level $k$  is the multiplicity of the corresponding $\G$-module in $\VV^{\otimes k}$, which is also the dimension of the corresponding module for  $\Z_k(\G)$ with that same label.  The right-hand column contains the sum of the squares of these dimensions and equals $\dimm\Z_k(\G)$.
An edge between level $k$ and level $k+1$ is highlighted if it \emph{cannot} be obtained as the reflection, over level $k$, of an edge between level $k-1$ and $k$.  The non-highlighted edges  correspond to the Jones basic construction discussed in Section 1.6.  Note that the highlighted edges produce an embedding of the representation graph $\mathcal{R}_\VV(\G)$ into the Bratteli diagram $\mathcal{B}_\VV(\G)$   in all cases except when $\G$ is a cyclic group $\C_n$ with $n$ odd, where the highlighted edges give the double of $\mathcal R_\VV(\G)$.

 \[
\begin{array}{rrll}
\SU_2: \qquad &\hskip.5in & \begin{array}{c}\includegraphics[scale=.7,page=9]{mckay-diagrams.pdf}\end{array} & \hskip1in \\
\\
\end{array}\]
 
Observe that $\C_5$ and $\C_{10}$ have isomorphic Bratteli diagrams; they each correspond to Pascal's triangle on a cylinder of ``diameter" 5: 
\[
\begin{array}{rrll}
\C_5: &\hskip.5in & \begin{array}{c}\includegraphics[scale=.55,page=10]{mckay-diagrams.pdf}\end{array} & \hskip1in \\
\\
\end{array}
\]

\[
\begin{array}{rrll}
\C_{10}: &\hskip.5in & \begin{array}{c}\includegraphics[scale=.55,page=11]{mckay-diagrams.pdf}\end{array} & \hskip1in \\ \\
\end{array}
\] 

\[
\begin{array}{rrll}
\C_\infty: &\hskip.5in & \begin{array}{c}\includegraphics[scale=.55,page=16]{mckay-diagrams.pdf}\end{array} & \hskip1in \\
\end{array}
\]

\begin{multicols}{2}
$\DD_{ 6}$:

\[
\includegraphics[scale=.52,page=12]{mckay-diagrams.pdf} 
\]

\columnbreak
$\DD_\infty$:\vspace{-.1truein}

\[
\includegraphics[scale=.52,page=17]{mckay-diagrams.pdf}
\]  
\end{multicols}  \msk \msk

\newpage
\begin{multicols}{2}
$\TT$: 

\[
\includegraphics[scale=.52,page=13]{mckay-diagrams.pdf}
\]

\columnbreak
$\OO$: 

\[
\includegraphics[scale=.52,page=14]{mckay-diagrams.pdf}
\]
\end{multicols}  
 
$\II$: 
\[\includegraphics[width=3.3in, height=2.4in,page=15]{mckay-diagrams.pdf}
\]

\subsection{Dimensions}
                                      
Using an inductive proof on the structure of the Bratteli diagram, we can compute the dimensions of the irreducible $\Z_k(\G)$-modules. The dimension of the centralizer algebra
$\Z_k(\G)$ is the multiplicity of the trivial $\G$-module (the white node)  at level $2k$. That dimension is also the sum of the squares of the irreducible $\G$-modules occurring in $\VV^{\ot k}$.  
We record  $\dimm \Z_k(\G)$ for $\G = \TT, \OO, \II$ in the next result.  The cyclic and dihedral
cases can be found in Sections 3 and 4.  

\begin{thm} \label{thm:ExceptionalDimensions} For $k \ge 1$,
\begin{enumerate}  
\item[\rm (a)] $\dimm \Z_k(\TT) = \displaystyle{\frac{4^k+8}{12}}$  {\rm (\cite{OEIS}  OEIS sequence  \href{http://oeis.org/A047849}{A047849})}. For 
$k \ge 2$,  the dimensions of the irreducible $\Z_k(\TT)$-modules, and thus also the multiplicities of the irreducible $\TT$-modules in $\VV^{\otimes k}$, are as shown in the following diagram:

\[ 
 \begin{array}{c}\includegraphics[scale=.8,page=18]{mckay-diagrams.pdf}\end{array}  \\
\]
\item[\rm (b)] $\dimm \Z_k(\OO) = \displaystyle{\frac{4^k +6\cdot 2^k+8}{24}}$  {\rm (\cite{OEIS} OEIS sequence  \href{http://oeis.org/A007581}{A007581})}.  For $k \ge 2$,  the dimensions of the irreducible 
$\Z_k(\OO)$-modules, and thus also the multiplicities of the irreducible $\OO$-modules in $\VV^{\otimes k}$, are as shown in the following diagram:

\[ 
 \begin{array}{c}\includegraphics[scale=.8,page=19]{mckay-diagrams.pdf}\end{array}  \\
\]

 \item[\rm (c)] $\dimm \Z_k(\II) = \displaystyle{\frac{4^k + 12L_{2k} + 20}{60}}$, where $L_n$ is the Lucas number defined by $L_0 = 2, L_1 = 1,$  and $L_{n+2} = L_{n+1} + L_{n}$.  For $k \ge 6$,  the dimensions of the irreducible $\Z_k(\II)$-modules, and thus also the multiplicities of the irreducible $\II$-modules in $\VV^{\otimes k}$, are as shown in the following diagram:
 \[ 
\hspace{-.5cm} \begin{array}{c}\includegraphics[scale=.8,page=20]{mckay-diagrams.pdf}\end{array}  \\
\]
 \end{enumerate}
\end{thm}

\begin{proof}
The proofs of the dimension formulas for the irreducible modules are by induction on $k$. The base cases are given in the Bratteli diagrams in the previous section.  The inductive step  amounts to verifying that the dimension formula for each node at level $k$ (for $k = 2n$ and $k = 2n+1$) equals the sum of the dimension formulas  for the nodes at level $k-1$ that are connected to the given one by an edge.  Each of these is a straightforward calculation.  The fact that $\dimm \Z_k(\G) = \dimm \Z_{2k}^{(0)}$ follows from \eqref{eq:evid}.
\end{proof}

\end{section}

\medskip

\noindent {\small Seattle, WA, 98103, USA}  \\
{\small(jmbarnes@google.com)} \\
{\small J. Barnes contributed
to these investigations while an undergraduate at Macalester College  and  is now  
at Google, Inc.}  \medskip

\noindent {\small Department of Mathematics,  
 University of Wisconsin-Madison, 
 Madison, WI 53706-1388, USA}  \\
 {\small (benkart@math.wisc.edu)}
 \medskip

\noindent 
{\small Department of Mathematics, Statistics, and Computer Science, Macalester College, St. Paul, MN 55105, USA} \\
{\small (halverson@macalester.edu)}

 \end{document}